\definecolor{shadecolor}{gray}{0.875}
\definecolor{dblue}{rgb}{0,0,.6}
\newcommand{\mathds}[1]{{\mathbb #1}}
\numberwithin{equation}{section}
\begin{document}
%
%
%
\theoremstyle{definition}
\newtheorem{Definition}{Definition}[section]
\newtheorem*{Definitionx}{Definition}
\newtheorem{Convention}{Definition}[section]
\newtheorem{Construction}[Definition]{Construction}
\newtheorem{Example}[Definition]{Example}
\newtheorem{Exercise}[Definition]{Exercise}
\newtheorem{Examples}[Definition]{Examples}
\newtheorem{Remark}[Definition]{Remark}
\newtheorem*{Remarkx}{Remark}
\newtheorem{Remarks}[Definition]{Remarks}
\newtheorem{Caution}[Definition]{Caution}
\newtheorem{Conjecture}[Definition]{Conjecture}
\newtheorem*{Conjecturex}{Conjecture}
\newtheorem{Question}[Definition]{Question}
\newtheorem*{Questionx}{Question}
\newtheorem*{Acknowledgements}{Acknowledgements}
\newtheorem*{Notation}{Notation}
\newtheorem*{Organization}{Organization}
\newtheorem*{Disclaimer}{Disclaimer}
\theoremstyle{plain}
\newtheorem{Theorem}[Definition]{Theorem}
\newtheorem*{Theoremx}{Theorem}

\newtheorem{Proposition}[Definition]{Proposition}
\newtheorem*{Propositionx}{Proposition}
\newtheorem{Lemma}[Definition]{Lemma}
\newtheorem{Corollary}[Definition]{Corollary}
\newtheorem*{Corollaryx}{Corollary}
\newtheorem{Fact}[Definition]{Fact}
\newtheorem{Facts}[Definition]{Facts}
\newtheoremstyle{voiditstyle}{3pt}{3pt}{\itshape}{\parindent}%
{\bfseries}{.}{ }{\thmnote{#3}}%
\theoremstyle{voiditstyle}
\newtheorem*{VoidItalic}{}
\newtheoremstyle{voidromstyle}{3pt}{3pt}{\rm}{\parindent}%
{\bfseries}{.}{ }{\thmnote{#3}}%
\theoremstyle{voidromstyle}
\newtheorem*{VoidRoman}{}

\newenvironment{specialproof}[1][\proofname]{\noindent\textit{#1.} }{\qed\medskip}
\newcommand{\blowup}{\rule[-3mm]{0mm}{0mm}}
\newcommand{\cal}{\mathcal}
\newcommand{\Aff}{{\mathds{A}}}
\newcommand{\BB}{{\mathds{B}}}
\newcommand{\CC}{{\mathds{C}}}
\newcommand{\BC}{{\mathds{C}}}
\newcommand{\EE}{{\mathds{E}}}
\newcommand{\FF}{{\mathds{F}}}
\newcommand{\GG}{{\mathds{G}}}
\newcommand{\G}{{\mathds{G}}}
\newcommand{\HH}{{\mathds{H}}}
\newcommand{\NN}{{\mathds{N}}}
\newcommand{\ZZ}{{\mathds{Z}}}
\newcommand{\BZ}{{\mathds{Z}}}
\newcommand{\PP}{{\mathds{P}}}
\newcommand{\QQ}{{\mathds{Q}}}
\newcommand{\RR}{{\mathds{R}}}
\newcommand{\BA}{{\mathds{A}}}
\newcommand{\BQ}{{\mathds{Q}}}
\newcommand{\UU}{{\mathcal U}}
\newcommand{\MM}{{\mathcal M}}
\newcommand{\xX}{{\mathcal X}}
\newcommand{\yY}{{\mathcal Y}}
\newcommand{\dD}{{\mathcal D}}
\newcommand{\kK}{{\mathcal K}}
\newcommand{\Liea}{{\mathfrak a}}
\newcommand{\Lieb}{{\mathfrak b}}
\newcommand{\Lieg}{{\mathfrak g}}
\newcommand{\Liem}{{\mathfrak m}}
\newcommand{\ideala}{{\mathfrak a}}
\newcommand{\idealb}{{\mathfrak b}}
\newcommand{\idealg}{{\mathfrak g}}
\newcommand{\idealm}{{\mathfrak m}}
\newcommand{\idealp}{{\mathfrak p}}
\newcommand{\idealq}{{\mathfrak q}}
\newcommand{\idealI}{{\cal I}}
\newcommand{\lin}{\sim}
\newcommand{\num}{\equiv}
\newcommand{\dual}{\ast}
\newcommand{\iso}{\cong}
\newcommand{\homeo}{\approx}
\newcommand{\mm}{{\mathfrak m}}
\newcommand{\pp}{{\mathfrak p}}
\newcommand{\qq}{{\mathfrak q}}
\newcommand{\rr}{{\mathfrak r}}
\newcommand{\pP}{{\mathfrak P}}
\newcommand{\qQ}{{\mathfrak Q}}
\newcommand{\rR}{{\mathfrak R}}
\newcommand{\OO}{{\mathcal O}}
\newcommand{\CO}{{\mathcal O}}
\newcommand{\numero}{{n$^{\rm o}\:$}}
\newcommand{\mf}[1]{\mathfrak{#1}}
\newcommand{\mc}[1]{\mathcal{#1}}
\newcommand{\into}{{\hookrightarrow}}
\newcommand{\onto}{{\twoheadrightarrow}}
\newcommand{\Spec}{{\rm Spec}\:}
\newcommand{\BigSpec}{{\rm\bf Spec}\:}
\newcommand{\Spf}{{\rm Spf}\:}
\newcommand{\Proj}{{\rm Proj}\:}
\newcommand{\Pic}{{\rm Pic }}
\newcommand{\MW}{{\rm MW }}
\newcommand{\Br}{{\rm Br}}
\newcommand{\NS}{{\rm NS}}
\newcommand{\Sym}{{\mathfrak S}}
\newcommand{\Aut}{{\rm Aut}}
\newcommand{\Autp}{{\rm Aut}^p}
\newcommand{\ord}{{\rm ord}}
\newcommand{\coker}{{\rm coker}\,}
\newcommand{\divisor}{{\rm div}}
\newcommand{\Def}{{\rm Def}}
\newcommand{\rank}{\mathop{\mathrm{rank}}\nolimits}
\newcommand{\Ext}{\mathop{\mathrm{Ext}}\nolimits}
\newcommand{\EXT}{\mathop{\mathscr{E}{\kern -2pt {xt}}}\nolimits}
\newcommand{\Hom}{\mathop{\mathrm{Hom}}\nolimits}
\newcommand{\Bs}{\mathop{\mathrm{Bs}}\nolimits}
\newcommand{\Bk}{\mathop{\mathrm{Bk}}\nolimits}
\newcommand{\HOM}{\mathop{\mathscr{H}{\kern -3pt {om}}}\nolimits}
\newcommand{\Exc}{\mathop{\mathrm{Exc}}\nolimits}
\newcommand{\calA}{\mathscr{A}}
\newcommand{\calC}{\mathscr{C}}
\newcommand{\calH}{\mathscr{H}}
\newcommand{\calL}{\mathscr{L}}
\newcommand{\calM}{\mathscr{M}}
\newcommand{\calN}{\mathscr{N}}
\newcommand{\calS}{\mathscr{S}}
\newcommand{\CS}{\mathcal{S}}
\newcommand{\N}{\mathcal{N}}
\newcommand{\calX}{\mathscr{X}}
\newcommand{\calK}{\mathscr{K}}
\newcommand{\calD}{\mathscr{D}}
\newcommand{\calY}{\mathscr{Y}}
\newcommand{\calF}{\mathscr{F}}
\newcommand{\f}{\mathscr{F}}
\newcommand{\e}{\mathscr{E}}
\newcommand{\calG}{\mathscr{G}}
\newcommand{\CN}{\mathcal{N}}
\newcommand{\DD}{\mathcal{D}}
\newcommand{\CCC}{\mathcal{C}}
\newcommand{\C}{\mathscr{C}}
\newcommand{\CK}{\mathcal{K}}
\newcommand{\CM}{\mathcal{M}}
\newcommand{\Jac}{\operatorname{Jac}}

\newcommand{\chari}{\mathop{\mathrm{char}}\nolimits}
\newcommand{\ch}{\mathop{\mathrm{ch}}\nolimits}
\newcommand{\CH}{\mathop{\mathrm{CH}}\nolimits}
\newcommand{\supp}{\mathop{\mathrm{supp}}\nolimits}
\newcommand{\codim}{\mathop{\mathrm{codim}}\nolimits}
\newcommand{\td}{\mathop{\mathrm{td}}\nolimits}
\newcommand{\Span}{\mathop{\mathrm{Span}}\nolimits}
\newcommand{\Gal}{\mathop{\mathrm{Gal}}\nolimits}
\newcommand{\sym}{\mathop{\mathrm{Sym}}\nolimits}
\newcommand{\cl}{\mathop{\mathrm{cl}}\nolimits}
\newcommand{\RCH}{\mathop{\mathrm{RCH}}\nolimits}
\newcommand{\Cr}{\mathop{\mathrm{Cr}}\nolimits}
\newcommand{\mult}{\mathop{\mathrm{mult}}\nolimits}
\newcommand{\piet}{{\pi_1^{\rm \acute{e}t}}}
\newcommand{\Het}[1]{{H_{\rm \acute{e}t}^{{#1}}}}
\newcommand{\Hfl}[1]{{H_{\rm fl}^{{#1}}}}
\newcommand{\Hcris}[1]{{H_{\rm cris}^{{#1}}}}
\newcommand{\HdR}[1]{{H_{\rm dR}^{{#1}}}}
\newcommand{\hdR}[1]{{h_{\rm dR}^{{#1}}}}
\newcommand{\loc}{{\rm loc}}
\newcommand{\et}{{\rm \acute{e}t}}
\newcommand{\defin}[1]{{\bf #1}}

\ifthenelse{\equal{1}{1}}{
\ifthenelse{\equal{2}{2}}{
\newcommand{\blue}[1]{{\color{blue}#1}}
\newcommand{\green}[1]{{\color{green}#1}}
\newcommand{\red}[1]{{\color{red}#1}}
\newcommand{\cyan}[1]{{\color{cyan}#1}}
\newcommand{\magenta}[1]{{\color{magenta}#1}}
\newcommand{\yellow}[1]{{\color{yellow}#1}} 
}{
\newcommand{\blue}[1]{#1}
\newcommand{\green}[1]{#1}
\newcommand{\red}[1]{#1}
\newcommand{\cyan}[1]{#1}
\newcommand{\magenta}[1]{#1}
\newcommand{\yellow}[1]{#1} 
}
}{
\newcommand{\blue}[1]{}
\newcommand{\green}[1]{}
\newcommand{\red}[1]{}
\newcommand{\cyan}[1]{}
\newcommand{\magenta}[1]{}
\newcommand{\yellow}[1]{} 
}

\newcommand{\ver}{\operatorname{vert}}

\newcommand{\eric}[1]{{\color{red} \sf $\clubsuit\clubsuit\clubsuit$ Eric: [#1]}}
\newcommand{\wern}[1]{{\color{purple} \sf $\clubsuit\clubsuit\clubsuit$ Wern: [#1]}}
\newcommand{\xichen}[1]{{\color{blue} \sf $\clubsuit\clubsuit\clubsuit$ Xi: [#1]}}

\renewcommand{\HH}{{\rm{H}}}

\title[Complements of Generic Hypersurfaces]{Algebraic Hyperbolicity of Complements of Generic Hypersurfaces in Projective Spaces}

\date{\today}

\author[X. Chen]{Xi Chen}
\address{632 Central Academic Building\\
University of Alberta\\
Edmonton, Alberta T6G 2G1, CANADA}
\email{xichen@math.ualberta.ca}

\author[E. Riedl]{Eric Riedl}
\email{eriedl@nd.edu}
\address{255 Hurley Hall \\
University of Notre Dame \\
Notre Dame, IN 46556}

\author[W. Yeong]{Wern Yeong}
\email{wyeong@nd.edu}
\address{255 Hurley Hall \\
University of Notre Dame \\
Notre Dame, IN 46556}


\keywords{algebraic hyperbolicity, log variety, Kobayashi conjecture}

\subjclass{14C25, 14C30, 14C35}

\begin{abstract}
We study the algebraic hyperbolicity of the complement of very general degree $2n$ hypersurfaces in $\PP^n$. We prove the Algebraic Green-Griffiths-Lang Conjecture for these complements, and in the case of the complement of a quartic plane curve, we completely characterize the exceptional locus as the union of the flex and bitangent lines.
\end{abstract}

\maketitle


\section{Introduction}

The relationship between the canonical bundle of a smooth variety $X$ and the holomorphic maps from $\mathbb{C}$ to $X$ is a fundamental driving theme in the study of these holomorphic maps. A major conjecture in this area is the Green-Griffiths-Lang Conjecture, which we state in the logarithmic form that is the focus of this paper.

\begin{Conjecture}\label{conj-GGL}
Let $X$ be a smooth projective variety and $D$ a simple normal crossings divisor such that $K_X + D$ is big. Then there is some proper subvariety $S \subsetneq X$ containing the images of all nonconstant holomorphic maps $\mathbb{C} \to X \setminus D$, i.e. $X\setminus D$ is pseudo Brody hyperbolic.
\end{Conjecture}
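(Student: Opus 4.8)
The statement is the general logarithmic Green--Griffiths--Lang conjecture, which is open; what follows is the program I would pursue and the precise point at which it stalls in full generality, rather than a complete argument.

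The plan is to run Demailly's jet-differential strategy in its logarithmic form. The central object is the bundle $E_{k,m}\,\Omega_X(\log D)$ of logarithmic jet differentials of order $k$ and weighted degree $m$, whose sections are polynomial differential operators in the derivatives $f',f'',\dots,f^{(k)}$ of a map $f$, assembled from the logarithmic cotangent sheaf $\Omega_X(\log D)$ and therefore regular along $D$ when evaluated on maps into $X\setminus D$. The argument has two halves. The first is a \emph{fundamental vanishing theorem}: if $P$ is a global section of $E_{k,m}\,\Omega_X(\log D)\otimes A^{-1}$ for an ample line bundle $A$, then every nonconstant holomorphic $f\colon\CC\to X\setminus D$ satisfies $P(j_k f)\equiv 0$. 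This is proved by a logarithmic Schwarz-lemma argument: after trivializing $A^{-1}$ locally, $P(j_k f)$ becomes a holomorphic function on $\CC$; the negativity of $A$ together with the logarithmic derivative lemma, which absorbs the poles coming from $D$, bounds this function, and an Ahlfors--Schwarz/Liouville argument then forces it to vanish identically.

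The second half is the \emph{existence} of such sections. Here I would invoke the algebraic holomorphic Morse inequalities on the Demailly--Semple tower of projectivized logarithmic jet bundles, or equivalently an asymptotic Riemann--Roch estimate for $E_{k,m}\,\Omega_X(\log D)\otimes A^{-1}$. Bigness of $K_X+D$ enters exactly here: it makes the leading intersection number governing the Euler characteristic positive for $k\gg 0$ and $m\gg 0$, so that global sections exist in abundance. Granting both halves, every entire curve into $X\setminus D$ lies in the common zero locus of all these $P$, and one takes $S$ to be the intersection of their base loci.

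The main obstacle --- and the reason the conjecture remains open --- is to control that base locus. The vanishing theorem only confines each entire curve to the zeros of the jet differentials valued in $A^{-1}$, and bigness of $K_X+D$ alone does not force this common base locus to be a \emph{proper} subvariety: the Morse inequality produces sections but gives no upper bound on where they all vanish simultaneously. Cutting the base locus down to a proper $S$ requires enough algebraically independent jet differentials, and the available techniques for this --- Siu's slanted vector fields, Brotbek's Wronskian sections, and the logarithmic refinements of Darondeau and Brotbek--Deng --- succeed only under strong positivity or genericity hypotheses well beyond mere bigness. Thus I expect the existence half to go through as stated, whereas the passage from ``every entire curve satisfies the equations'' to ``every entire curve lies in a proper $S$'' is where a genuinely new idea is required; it is precisely this gap that the present paper sidesteps by working instead with the algebraic-hyperbolicity analogue for very general hypersurfaces.
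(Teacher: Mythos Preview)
The statement you were asked to address is labeled \texttt{Conjecture} in the paper, not \texttt{Theorem}, and the paper does not prove it; immediately after stating it the authors write that ``a full proof of Conjecture~\ref{conj-GGL} appears far out of reach.'' So there is no proof in the paper to compare your proposal against. Your write-up is an honest and accurate summary of the Demailly jet-differential program and of exactly why it does not close the gap (control of the base locus), and you correctly note that the paper sidesteps this by passing to the algebraic analogue, Conjecture~\ref{conj-algGGL}, for very general degree-$2n$ hypersurfaces. As a response to a request for a proof, then, the right answer is simply that no proof exists; what you have supplied is a well-informed explanation of the state of the art rather than an argument, and it should be read as such.
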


As it stands, a full proof of Conjecture \ref{conj-GGL} appears far out of reach. In \cite{ChenX-log}, the author introduced the following algebraic version of hyperbolicity for log varieties, which is a natural extension of Demailly's definition for projective varieties \cite{Dem97}. Given a smooth projective variety $X$ and a simple normal crossings divisor $D$, we say that the log pair $(X,D)$ is \emph{algebraically hyperbolic} if there exists an $\epsilon > 0$ such that every map $f: C\to X$ from a smooth projective curve $C$ that is birational onto its image and $f(C)\not\subset D$ satisfies $ 2g(C) - 2 + |f^{-1}(D)| > \epsilon \deg f^* L$, where $L$ is an ample line bundle on $X$ and $g(C)$ is the geometric genus of $C$. 
The following weaker version of Conjecture \ref{conj-GGL} with Brody hyperbolicity replaced by algebraic hyperbolicity has turned out to be more tractable. 

\begin{Conjecture}\label{conj-algGGL}
Let $X$ be a smooth projective variety and $D$ be a simple normal crossings divisor such that $K_X + D$ is big. Then there exists an $\epsilon > 0$ and a subvariety $S \subsetneq X$ such that every map $f: C\to X$ from a smooth projective curve $C$ that is birational onto its image and $f(C)\not\subset D\cup S$ satisfies
$$ 2g(C) - 2 + |f^{-1}(D)| > \epsilon \deg f^* L$$
where $L$ is an ample line bundle on $X$ and $g(C)$ is the geometric genus of $C$.
\end{Conjecture}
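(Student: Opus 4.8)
The plan is to reduce the inequality to the existence of suitable logarithmic jet differentials on $(X,D)$ and then to extract the genus-and-ramification bound from a vanishing theorem on the curve $C$. Concretely, write $\Omega_X(\log D)$ for the logarithmic cotangent sheaf, so that $\det \Omega_X(\log D) = \mathcal{O}_X(K_X + D)$, and let $E_{k,m}\,\Omega_X(\log D)$ denote the bundle of logarithmic jet differentials of order $k$ and weighted degree $m$ in the sense of Demailly--Semple. Any nonconstant $f \colon C \to X$ with $f(C)\not\subset D$ lifts canonically to the logarithmic $k$-jet space $J_k(X,D)$, and a global section $P$ of $E_{k,m}\Omega_X(\log D)\otimes A^{-1}$, for $A$ an ample line bundle, pulls back to a section of a line bundle on $C$ whose degree is controlled by the logarithmic derivatives of $f$. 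The first step is therefore to establish the logarithmic fundamental vanishing theorem in this algebraic setting: if $P$ vanishes along an ample divisor, then $P\circ f_{[k]} \equiv 0$, and a Wronskian computation converts this identically-zero statement into an inequality of the shape $m\,\deg f^*A \le c_{k,m}\,\bigl(2g(C) - 2 + |f^{-1}(D)|\bigr)$ for an explicit constant $c_{k,m}$.

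Granting this, the problem becomes the nonvanishing of $H^0\!\bigl(X, E_{k,m}\Omega_X(\log D)\otimes A^{-1}\bigr)$ for some $k,m$ with $m/c_{k,m}$ bounded below; the locus $S$ is then taken to be the base locus of the space of such sections together with $D$ itself, and $\epsilon$ is read off from the lower bound on $m/c_{k,m}$. The second step is thus a Riemann--Roch / holomorphic Morse-inequality estimate for the alternating sum $\sum_i (-1)^i h^i$ of the jet bundle, whose leading term for $m \gg k \gg 0$ grows like a positive multiple of a top self-intersection built from $c_1(K_X+D)$. Bigness of $K_X + D$ is precisely the hypothesis that forces this leading term to be positive, so the Euler characteristic is eventually large and positive.

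The main obstacle --- and the reason the statement is a conjecture rather than a theorem --- is the gap between this Euler-characteristic lower bound and an actual nonvanishing of $H^0$. The Morse inequality controls only $h^0 - h^1 + \cdots$, whereas the construction requires $h^0 > 0$; in dimension at least two there is no vanishing theorem known for the higher cohomology of $E_{k,m}\Omega_X(\log D)$ under the mere bigness of $K_X+D$, and in general the positivity of $\det\Omega_X(\log D)$ does not propagate to the semipositivity of $\Omega_X(\log D)$ needed to kill $H^{>0}$. Overcoming this would require either a new cohomology-vanishing statement for logarithmic jet bundles or a Bogomolov-type argument producing the differentials directly from bigness; absent such an input the strategy succeeds only under stronger positivity assumptions on $(X,D)$, which is exactly the regime of the special cases --- complements of very general hypersurfaces in $\PP^n$ --- treated in this paper.
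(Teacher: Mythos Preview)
The statement you were asked to address is a \emph{conjecture}, and the paper does not prove it; you correctly recognize this and your final paragraph is honest about the gap in the jet-differential strategy. So there is no ``paper's own proof'' to compare against for Conjecture~\ref{conj-algGGL} in full generality.

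That said, it is worth noting that the paper's contribution is the special case $X=\PP^n$, $D$ a very general hypersurface of degree $2n$ (Theorem~\ref{thm-varmain}), and the method used there bears no resemblance to the Demailly--Semple jet-differential approach you outline. Instead of producing global log jet differentials and invoking a vanishing theorem, the paper runs a \emph{variational} argument in the spirit of Clemens--Voisin--Pacienza--Rousseau: one spreads the curve $C$ out over the parameter space $B=H^0(\OO_{\PP^n}(d))$, compares vertical log tangent sheaves of the universal family to the Lazarsfeld bundle $M_d$ (and then $M_1$), and uses the degree of the log normal sheaf $N_{Y_b/\PP^n}(\log D_b)$ to extract the inequality $2g-2+i\ge(d-2n)e$. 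The key new input is a stability analysis of $f_b^*M_1$: when the naive bound fails, the destabilizing subsheaf produces an \emph{associated line} through each point of $C$, forcing $C$ to lie in a fiber of a moduli map $\phi:V\dashrightarrow U_{0,d}^{\sym}$, and those fibers are then analyzed directly (for $n=2$, via a detailed local study of cross-ratios near flex lines).

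So your sketch is an accurate account of why the general conjecture is open, but it is not the mechanism by which the paper establishes its results. If you want to engage with what the paper actually does, the relevant objects are $N_{Y/\PP^n}(\log D)$, the surjection $M_1^{\oplus t}\to M_d$, and the Harder--Narasimhan filtration of $f_b^*M_1$, rather than $E_{k,m}\Omega_X(\log D)$.
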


This conjecture is closely related to some conjectures of Vojta \cite{vojta1999integral}, and is sometimes called the algebraic Lang-Vojta Conjecture for function fields \cite{CorvajaZannier2008}, although neither Lang nor Vojta put their conjectures in this form. See \cite{JavanpeykarSurvey} for a survey on various notions of hyperbolicity. We prove Conjecture \ref{conj-algGGL} for very general hypersurfaces in $\PP^n$ of degree $d = 2n$ and give a complete description of the exceptional locus $S$ in the case $n=2$, $d=4$.

\begin{Theorem}\label{thm-varmain}
    For a very general hypersurface $D\subseteq \PP^n$ of degree $d=2n$, there is some proper exceptional locus $S\subsetneq \PP^n$ such that for every map $f: C\to \PP^n$ from a smooth projective curve $C$ that is birational onto its image and $f(C)\not\subset D\cup S$, we have: \begin{itemize}
        \item for $n \geq 3$ \begin{equation}\label{eqn-logah3}
        2g(C)-2+|f^{-1}(D)|\geq \deg f^* L
    \end{equation}
    \item for $n=2$, \begin{equation}\label{eqn-logah2}
        2g(C)-2+|f^{-1}(D)| \geq \frac{1}{2} \deg f^*L
    \end{equation}
    \end{itemize}
    where $L = \OO_{\PP^n}(1)$ is the hyperplane bundle of $\PP^n$.
    In the case $n=2$, $S\subsetneq \PP^2$ is exactly the union of the bitangent and flex lines to $D$.
\end{Theorem}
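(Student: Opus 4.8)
The plan is to argue by contradiction via a specialization of $D$, following the standard degeneration strategy for algebraic hyperbolicity, supplemented by a direct analysis of the lines. Suppose the conclusion fails for very general $D$. Since the discrete invariants involved --- the genus $g$ of $C$, the degree $\delta=\deg f^*L$, and the partition type of the divisor $f^*D$ on $C$ --- take only countably many values, a Baire-category argument produces an irreducible base $B$, a dominant morphism $B\to U$ to the space $U$ of smooth degree-$2n$ hypersurfaces, a family $\mathcal C\to B$ of smooth projective curves of genus $g$, and a $B$-morphism $F\colon\mathcal C\to\PP^n$ such that for very general $b\in B$, writing $D_b$ for the corresponding hypersurface and $f_b=F|_{C_b}$, the map $f_b$ is nonconstant, $f_b(C_b)\not\subseteq D_b\cup S$, $\deg f_b^*L=\delta$, and $2g-2+|f_b^{-1}(D_b)|<\lambda\delta$, where $\lambda=1$ for $n\ge 3$ and $\lambda=\tfrac12$ for $n=2$. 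One may also arrange that $f_b$ is birational onto its image and $C_b$ is the normalization of $f_b(C_b)$. Setting $r_b=\deg f_b^*D_b-|f_b^{-1}(D_b)|=2n\delta-|f_b^{-1}(D_b)|$ for the total ramification of $f_b$ along $D_b$, the assumed failure of \eqref{eqn-logah3}--\eqref{eqn-logah2} reads $r_b>2g-2+(2n-\lambda)\delta$; that is, the curves $f_b(C_b)$ are abnormally tangent to $D_b$, and this is what we must contradict.

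First dispose of the case that $f_b(C_b)$ is a line, which accounts for the linear part of $S$. If the image is a line $\ell$, then $f_b$ factors through a degree-$\delta$ cover $C_b\to\ell\cong\PP^1$ and $f_b^{-1}(D_b)=f_b^{-1}(\ell\cap D_b)$. If $\ell\cap D$ consists of $k$ distinct points, Riemann--Hurwitz gives $2g-2+|f_b^{-1}(D_b)|\ge(k-2)\delta$, so \eqref{eqn-logah3}--\eqref{eqn-logah2} holds comfortably whenever $k\ge 3$ (since then $k-2\ge 1\ge\lambda$). Hence the only lines that can lie in $S$ are those meeting $D$ in at most two points; for $n=2$ such a line meets the very general quartic $D$ with intersection profile $(2,2)$ or $(3,1)$ --- a bitangent or a flex line, there being no hyperflexes for very general $D$ --- so the linear locus of $S$ is contained in the union of the $28$ bitangents and $24$ flex lines of $D$. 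From here on assume $f_b(C_b)$ is not a line.

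To kill the surviving bad families, specialize $D$ within $\PP^N$ along a general arc to a divisor $D_0$ with many hyperplane components --- e.g.\ a union $H_1+\dots+H_{2n}$ of hyperplanes in general position --- pull back the family of curves over that arc, and take a stable limit after semistable reduction, obtaining a nodal curve $C_0$ mapped to $\PP^n$. Its components split into those contained in some $H_i$ and those not. The essential technical point, which is also the main obstacle, is to transfer the abnormal tangency of the general member to the special fibre: one must show that the total contact of $C_0$ with $\sum H_i$ --- accounting for the contributions of the nodes of $C_0$ and of the intersections of $C_0$ with the singular strata of $D_0$ --- is at least $r_b$. Granting this, a curve in $\PP^n$ so strongly tangent to a general hyperplane arrangement is severely constrained: by the well-understood hyperbolicity of complements of general hyperplane arrangements, together with a component-by-component bookkeeping of genus and degree on $C_0$, such a curve must be forced into a proper linear subspace or into the singular locus of the arrangement, and following this constraint back up the degeneration contradicts both the dominance of $B\to U$ and the strict inequality $r_b>2g-2+(2n-\lambda)\delta$.

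The heart of the matter is making the preceding step quantitative, so as to extract the sharp constants $\lambda=1$ ($n\ge3$) and $\lambda=\tfrac12$ ($n=2$). This is cleanest when phrased as positivity on the universal pair: on $\PP^n\times U$ with its universal divisor $\mathcal D$, the relevant twisted logarithmic sheaf has exactly enough positivity --- modulo the locus swept out by the excluded lines --- to force $\deg\Omega^1_{C_b}(\log f_b^{-1}(D_b))=2g-2+|f_b^{-1}(D_b)|\ge\lambda\delta$; concretely one compares the differentiated contact conditions against the base-point-free linear system on $C_b$ cut out by degree-$2n$ forms from $H^0(\PP^n,\OO_{\PP^n}(2n))$, and deduces $r_b\le 2g-2+(2n-\lambda)\delta$. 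The factor of two lost when $n=2$ reflects a borderline family of nonlinear plane curves --- conics in special contact with $D$ --- for which \eqref{eqn-logah2} becomes an equality: these curves do \emph{not} violate \eqref{eqn-logah2}, hence need not be added to $S$, whereas for $n\ge 3$ a dimension count rules out any analogous nonlinear family and one only needs $S$ to be a proper subvariety. Finally, for $n=2$ the converse inclusion is immediate --- a bitangent or flex line $\ell$, normalized by $\PP^1$, has $g=0$, $\delta=1$ and $|f^{-1}(D)|=2$, so the left side of \eqref{eqn-logah2} is $0<\tfrac12$ --- so every bitangent and flex line must belong to $S$; combined with the inequality established for all other curves, $S$ equals exactly the union of the bitangent and flex lines, completing the characterization.
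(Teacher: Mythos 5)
Your reduction of the linear case is fine (the log Riemann--Hurwitz estimate shows any line meeting $D$ in at least three points satisfies the bound, so only bicontact lines --- for $n=2$, the bitangents and flexes --- can enter $S$), but the core of the theorem is the inequality for \emph{nonlinear} curves with abnormal tangency, and that is exactly where your argument stops being a proof. The degeneration step is conceded rather than carried out: you write ``Granting this'' for the transfer of the contact condition $r_b>2g-2+(2n-\lambda)\delta$ to the stable limit, and the follow-up paragraph (``positivity on the universal pair,'' ``compares the differentiated contact conditions against the base-point-free linear system'') names no sheaf, no exact sequence, and no mechanism from which the sharp constants $\lambda=1$ and $\lambda=\tfrac12$ could be extracted. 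The transfer itself is genuinely delicate --- under semistable reduction the excess contact can be absorbed at nodes mapping into the singular strata of $D_0=H_1+\dots+H_{2n}$, which is precisely the bookkeeping you defer --- and the target of the degeneration does not deliver what you invoke: the complement of $2n$ general hyperplanes in $\PP^n$ is itself not algebraically hyperbolic (for $n=2$, the three diagonals of the complete quadrilateral meet the arrangement in two points, and there are further low-contact curves), so ``the well-understood hyperbolicity of complements of general hyperplane arrangements'' cannot by itself force the limit curve into a proper linear subspace, let alone yield a quantitative bound with the stated constants.

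The gap is most visible in the $n=2$ characterization $S=\{\text{bitangents}\}\cup\{\text{flexes}\}$: this requires proving that \emph{every} nonlinear curve satisfies $2g-2+|f^{-1}(D)|\ge\tfrac12\deg f^*L$, and nothing in your outline does that. In the paper this is the bulk of the work: a variational argument with the Lazarsfeld bundle $M_{2n}$ and stability of $f_b^*M_1$ shows that any violating curve acquires an associated line at each point and lifts to the universal line $V$ inside a fiber (or the undefined locus) of $\phi:V\dashrightarrow U_{0,4}^{\sym}$ (Theorem \ref{thm-associatedLine}); then a detailed study of constant-modulus loci (Lemma \ref{LANGLOGAH2LEMFIXMODULI}: the fibers in $\GG(1,2)$ have degree $12$, $6$ or $4$ with cusps at the $24$ flex lines), a class computation on $V$, and a Chern-class computation for $T_\phi$ give $2g-2+i\ge 84$ for such curves, whose images have degree $18$ (Corollary \ref{cor-finalFibersCor}). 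Similarly, for $n\ge3$ your claim that ``a dimension count rules out any analogous nonlinear family'' is asserted, not performed; the paper gets properness of $S$ from the boundedness of the family of fiber/undefined-locus curves plus a degree estimate on $N_{Y/\PP^n}(\log D)$. Without a worked-out quantitative replacement for these steps, your proposal establishes only the easy inclusions about lines, not the theorem.
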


Zaidenberg \cite{Zaidenberg} shows that for $d \leq 2n$, the set $S$ cannot be empty, as there exist lines meeting $D$ set-theoretically in at most two points.

Theorem \ref{thm-varmain} proves a well-known conjecture of Lang about semi-abelian varieties in this case (Conjecture 2.7 in \cite{lang1986hyperbolic}).

\begin{Corollary}
\label{cor-LangCor}
Any nonconstant map from a semiabelian variety to the complement of a very general quartic in $\PP^2$ must have image in the locus of flex and bitangent lines.
\end{Corollary}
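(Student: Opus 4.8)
\emph{Proof strategy.} The plan is to derive Corollary~\ref{cor-LangCor} from the $n=2$ case of Theorem~\ref{thm-varmain} by slicing a hypothetical nonconstant morphism $g\colon A\to\PP^2\setminus D$ with cosets of a one-parameter subgroup and inducting on $\dim A$. Write $S\subsetneq\PP^2$ for the union of the (finitely many) flex and bitangent lines to $D$, a closed curve; by Theorem~\ref{thm-varmain}, every nonconstant morphism $f\colon C\to\PP^2$ from a smooth projective curve with $f(C)\not\subseteq D\cup S$ satisfies $2g(C)-2+|f^{-1}(D)|\ge\tfrac12\deg f^{*}L>0$. I will also use two elementary facts about the target: $\PP^2\setminus D$ is affine, being the complement of the ample effective divisor $D$, and any morphism from a connected complete variety to an affine variety is constant.

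I would begin with the base of the induction and the complete case together: if $A$ is an abelian variety, then $g(A)$ is a complete subvariety of the affine variety $\PP^2\setminus D$, hence a point, so $g$ is constant and there is nothing to prove. So assume $A$ is not complete; then its maximal subtorus $T\subseteq A$ has positive dimension and contains a subgroup $B\cong\GG_m$. The cosets $a+B$ (for $a\in A$) partition $A$, and the locus $Y:=\{a\in A:\ g|_{a+B}\ \text{is constant}\}$ is closed, being the intersection over $b\in B$ of the closed sets $\{a:\ g(a+b)=g(a)\}$.

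Now the dichotomy. If $Y=A$, then $g$ is invariant under $B$, hence factors as $A\to A/B\xrightarrow{\bar{g}}\PP^2\setminus D$; here $A/B$ is semiabelian of dimension $\dim A-1$ (it sits in $0\to T/B\to A/B\to A/T\to 0$) and $\bar{g}$ is again nonconstant, so the induction hypothesis gives $g(A)=\bar{g}(A/B)\subseteq S$. If $Y\ne A$, then for $a$ in the dense open set $A\setminus Y$ the map $g|_{a+B}$ is a nonconstant morphism $\GG_m\to\PP^2\setminus D$; identifying $a+B\cong\PP^1\setminus\{0,\infty\}$, it extends to a nonconstant morphism $\bar{g}_a\colon\PP^1\to\PP^2$ with $\bar{g}_a^{-1}(D)\subseteq\{0,\infty\}$, so that $2g(\PP^1)-2+|\bar{g}_a^{-1}(D)|\le 0<\tfrac12\deg\bar{g}_a^{*}L$. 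By Theorem~\ref{thm-varmain} this forces $\bar{g}_a(\PP^1)\subseteq D\cup S$, and since $\bar{g}_a(\PP^1)$ is irreducible and meets $\PP^2\setminus D$ it must lie inside one of the lines comprising $S$; in particular $g(a)\in S$. As $S$ is closed and $a$ ranges over a dense set, $g(A)\subseteq S$, and the induction is complete. The stated conclusion for semiabelian maps to the complement of a very general quartic follows, since $S$ is exactly the locus of flex and bitangent lines.

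I expect no serious obstacle here: the entire geometric content is carried by the $n=2$ case of Theorem~\ref{thm-varmain}, and this argument merely organizes an arbitrary semiabelian source into $\GG_m$-curves on which that theorem bites. The routine verifications are that $\PP^2\setminus D$ is affine, that $Y$ is closed, and that a morphism $\GG_m\to\PP^2$ extends across the two missing points. The only place that needs a little care is the bookkeeping of the induction — checking that $A/B$ is again semiabelian of strictly smaller dimension and that $g$ cannot degenerate to a constant on the quotient — which is immediate from the structure sequence $0\to T/B\to A/B\to A/T\to 0$ together with the surjectivity of $A\to A/B$.
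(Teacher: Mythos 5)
Your proof is correct, but it takes a genuinely different route from the paper's. The paper follows Demailly's argument: assuming the image of $\alpha\colon A\to\PP^2\setminus D$ is not contained in $S$, it fixes a single curve $f\colon C\hookrightarrow A$ cut out by generic very ample divisors and composes with the multiplication maps $[n]$; the quantity $2g(\overline{C})-2+|\overline{C}\setminus C|$ bounds $2g(\overline{C})-2+|\overline{h}_n^{-1}(D)|$ and is independent of $n$, while $\deg \overline{h}_n^*L\to\infty$, contradicting the $n=2$ inequality of Theorem \ref{thm-varmain}. You instead use the affineness of $\PP^2\setminus D$ and slice $A$ by cosets of a one-parameter subtorus $B\cong\GG_m$, inducting on $\dim A$: either $g$ descends to the semiabelian quotient $A/B$, or generic $\GG_m$-slices give nonconstant maps whose completions $\PP^1\to\PP^2$ meet $D$ in at most two points, so the inequality fails, Theorem \ref{thm-varmain} forces the irreducible image into $D\cup S$ and hence into $S$, and closedness of $g^{-1}(S)$ finishes. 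What each buys: the paper's argument is uniform in $A$ (no induction, no case split, no descent along $A\to A/B$), at the cost of needing the degree growth of $\overline{h}_n^*L$ under $[n]$ and a choice of $C$ avoiding the countably many loci $[n]^{-1}(\alpha^{-1}(S))$; your argument invokes Theorem \ref{thm-varmain} only for rational curves meeting $D$ set-theoretically in at most two points — a much weaker input — and avoids multiplication maps entirely, at the cost of the standard but worth-stating facts that a $B$-invariant morphism descends along the $B$-torsor $A\to A/B$ and that $A/B$ is again semiabelian (your exact sequence $0\to T/B\to A/B\to A/T\to 0$ handles this, and the degenerate case $A=B$ cannot occur in the $Y=A$ branch since $g$ would then be constant). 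Both proofs carry all the geometric content through the $n=2$ case of Theorem \ref{thm-varmain}, so yours is a valid alternative.
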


In fact, the following stronger statement follows from Corollary \ref{cor-LangCor} combined with Lemmas 3.15 and 3.16 of \cite{JavanpeykarXie}.

\begin{Corollary}
Any nonconstant map from a connected algebraic group over $\CC$ to the complement of a very general quartic $D$ in $\PP^2$ must have image in the locus $\Delta$ of flex and bitangent lines, i.e. $\PP^2\setminus D$ is groupless modulo $\Delta$.    
\end{Corollary}

Conjectures \ref{conj-GGL} and \ref{conj-algGGL} are still open in dimension two and above, even for complements of smooth hypersurfaces in projective space. See \cite{RousTurWangForum, RousTurWang2021} for some further discussion about related conjectures on the arithmetic of complements of divisors. There has been a lot of work done on proving special cases of these conjectures, both for very general hypersurfaces in projective space and for the complements of plane curves with sufficiently many components.

Work on the algebraic hyperbolicity of very general hypersurfaces in projective space started with Clemens' seminal work \cite{C}. His techniques have been improved in \cite{E1}, \cite{E2}, \cite{V1}, \cite{V2}, \cite{Pac03}, \cite{CR04}, \cite{RY16} and many more and adapted to the log case
in \cite{PR-log}. For a very general hypersurface $D\subset \PP^n$ of degree $d$,
$$
2g(C) - 2 + i_X(C, D) \ge (d-2n) \deg C
$$
for every integral curve $C\subset \PP^n$ and $C\not\subset D$, where $g(C)$ is the geometric genus of $C$ and $i_X(C,D)$ is the number of points in $\nu^{-1}(D)$ under the normalization $\nu: C^\nu\to X$ of $C$ \cite{ChenX-log, PR-log}. Using this result, Roulleau and Rousseau prove algebraic hyperbolicity of certain cyclic covers of projective space \cite{RoulleauRousseau}. There is also important work done on the Brody hyperbolicity of hypersurfaces and the Kobayashi Conjecture, including \cite{DivMerkRouss}, \cite{Brotbek}, \cite{BrotbekDeng}, \cite{BercziKirwan}, \cite{RiedlYangAGTH}. 

The (algebraic) hyperbolicities of $(X, D)$ and a cyclic cover $\widehat{X}$ of $X$ ramified over $D$ are related. It is obvious that the (pseudo) algebraic hyperbolicity of $\widehat{X}$ implies that of $(X,D)$. To derive the converse, we need to show that
$$ 2g(C) - 2 + |f^{-1}(D)| \ge (\varepsilon + \dfrac{1}{m}) \deg f^* D $$
for some $\varepsilon > 0$, assuming that $D$ is ample, where $m$ is the degree of $\widehat{X}\to X$. So our result just fails to imply the pseudo algebraic hyperbolicity of a degree $d$ cyclic over of $\PP^n$ ramified over a very general hypersurface of degree $d=2n$.

For the complement of special plane curves, much of the work focuses on curves which consist of several irreducible components. After pioneering work by Green \cite{Green}, further results include \cite{DethloffSchumacherWong}, \cite{Babets}, \cite{CorvajaZannier2008}, \cite{CorvajaZannier}.

In Section \ref{sec-variational}, we first employ a variant of an argument used by many others (including Pacienza and Rousseau in \cite{PR-log}) to characterize the possible curves violating algebraic hyperbolicity. We then make a few key observations about the stability of the Lazarsfeld bundle $M_1$ that allow us to characterize the possible exceptional curves as coming from the fibers of a rational map $\theta$. In Section \ref{sec-fixedModulus}, we then study the specific geometry of these fibers for the case $n=2$, showing that the only curves violating (\ref{eqn-logah2}) are the bitangent and flex lines.

\subsection*{Acknowledgements}
We gratefully acknowledge helpful conversations with Kenny Ascher, Izzet Coskun, Ariyan Javanpeykar, Emanuela Marangone, and Amos Turchet. We are also grateful for many helpful suggestions by the anonymous referee. Eric Riedl is supported by NSF CAREER grant DMS-1945944. Xi Chen is supported by Discovery Grant RGPIN-2019-04775 from the Natural Sciences and Engineering Research Council of Canada.

\section{Notation and conventions}

\subsection{Lines with fixed modulus}
A key consideration in this paper is the modulus of intersection of a line with a given hypersurface. Recall that $\overline{M}_{0,d}$ is the moduli space of stable rational curves with $d$ marked points. It is a projective variety, and forms a fine moduli space for families of stable rational curves. After taking a quotient by the natural $S_d$ action, we obtain the space of stable rational curves with $d$ unordered marked points, which we call $\overline{M}_{0,d}^{\sym}$. Similarly, we have the space $\overline{M}_{0,d+1}$ and can take its quotient by the $S_d$ action on the first $d$ points and get a space $\overline{U}_{0,d}^{\sym}$. The space $\overline{M}_{0,d}^{\sym}$ is a coarse moduli space for families of stable rational curves with $d$ marked, unordered points, and $\overline{U}_{0,d}^{\sym}$ is the coarse moduli space for families of stable rational curves with one special marked point together with $d$ other, unordered marked points.

Let $V \subset \PP^n \times \mathbb{G}(1,n)$ be the universal line in $\PP^n$, and let $\pi$ be the projection to $\PP^n$ and $\eta$ the projection to $\mathbb{G}(1,n)$. Given a hypersurface $D \subset \PP^n$, we have a rational map $\theta: V \dashrightarrow \overline{U}_{0,d}^{\sym}$ given by sending a pair $(p,\ell)$ to the element $(D \cap \ell, p)$, where the $d$ unordered points are given by $D \cap \ell$ and the $(d+1)$st point is given by $p$.

This map will be particularly important in the case $d=4$, $n=2$, where we study the fibers of $\theta$ in detail. In this case the map $\theta$ is defined everywhere except for pairs $(p,\ell)$ where $p \cup \ell \cap D$ has a multiplicity three point along the line $\ell$, where it isn't clear which stable curve corresponds to $\ell \cap D$. In particular, this means that $\theta$ is undefined along the flex locus, or along tangent lines to $D$ when the point $p$ coincides with the point of tangency.

\subsection{Log tangent sheaves and Lazarsfeld bundles}
We briefly describe some important sheaves that we use in this paper, and highlight a few of their important properties. We begin by reviewing \emph{log tangent sheaves}. Let $X$ be a smooth projective variety and $D$ a divisor in $X$. Then the log tangent sheaf of $D$ in $X$ is defined by the exact sequence
\[ \begin{tikzcd} 0 \ar{r} & T_X(-\log D) \ar{r} & T_X \ar{r} & \OO_D(D) \ar{r} & 0. \end{tikzcd}\]
If $D$ is a simple normal crossing divisor (so in particular, if $D$ is smooth), then $T_X(-\log D)$ is a vector bundle. Given a smooth subvariety $Y$ of $X$, we also have the \emph{log normal sheaf} $N_{Y/X}(\log D)$, defined by the following short exact sequence:
\[ \begin{tikzcd} 0 \ar{r} & T_Y(-\log D) \ar{r} & T_X(-\log D)|_Y \ar{r} & N_{Y/X}(\log D) \ar{r} & 0. \end{tikzcd} \]
Recall that if $f: C \to X$ is a map from a smooth curve to $X$, the space of deformations of $f \in \Hom(C,X)$ that preserve $f^{-1}(D)$ is given by $H^0(f^*T_X(-\log D))$ (Proposition 5.3 in \cite{KeelMcKernan}). Similarly, the space of deformations of the pair $(f,C)$ in Kontsevich space that preserve the combinatorial type of the divisor $f^{-1}(D)$ are given by $H^0(N_{f/X}(\log D))$ (see \cite{Olsson-logcot} Theorem 5.6).

We also use Lazarsfeld bundles, also known as syzygy bundles \cite{ELM}. We denote by $M_d$ the \emph{Lazarsfeld bundle associated to $\OO_{\PP^n}(d)$}, which is defined as the kernel in the short exact sequence:
\begin{equation}\label{eqn-lm}
\begin{tikzcd}
0 \ar{r} & M_d \ar{r} & H^0(\PP^n,\OO_{\PP^n}(d)) \otimes \OO_{\PP^n} \ar{r}{\text{ev}} & \OO_{\PP^n}(d) \ar{r} & 0.
\end{tikzcd}
\end{equation}
These are vector bundles on $\PP^n$, and the fiber of $M_d$ over a point $p$ of $\PP^n$ is the space of degree $d$ polynomials on $\PP^n$ that vanish at $p$.

\subsection{Dominant families of curves}
Our last result shows that dominant families of curves satisfy a strong form of algebraic hyperbolicity. Thus, the challenge in proving Conjecture \ref{conj-algGGL} lies in showing that all curves violating the inequality are contained in some subvariety, not some countable union of subvarieties.

\begin{Proposition}\label{prop-algGGLfordominantFamiliesOfCurves}
    Let $X$ be a smooth variety and $D$ be a smooth divisor in $X$ and $f: C \to X$ be the general element of a family of curves with fixed genus $g$ and number of intersection points $i$ with $D$, such that the family of curves dominates $X$. Then 
    \[ 2g-2 + i \geq (K_X+D) \cdot f(C) .\]
\end{Proposition}
\begin{proof}
    Because $C$ is a general element of a dominant families of curves in $X$, the log normal sheaf $N_{f/X}(\log D)$ is generically globally generated, and hence has non-negative degree. The degree of the log normal sheaf is $2g-2+i - (K_X+D) \cdot f(C)$. The result follows.
\end{proof}

\section{A variational argument}
\label{sec-variational}

In this section we study the curves that may violate algebraic hyperbolicity. We show that to every point in such a curve, there is an associated line $\ell$, and that such lines either intersect $D$ with fixed modulus or intersect $D$ in few points. 
More precisely, we show the following.

\begin{Theorem} \label{thm-associatedLine}
    Let $D \subset \PP^n$ be a very general hypersurface of degree $d=2n$. Then for any curve $C$ in $\PP^n$ not contained in $D$, we have either \begin{enumerate}
        \item $C$ satisfies one of the following conditions \begin{eqnarray*}
        2g(C)-2+i(C,D)\geq \deg C \text{ and $n \geq 3$}  \\
        2g(C)-2+i(C,D)\geq \frac{1}{2} \deg C \text{ and $n = 2$}
    \end{eqnarray*}
    or
    \item $C$ lifts to $V$ and is contained in either a fiber or the undefined locus of the rational map $\theta: V \dashrightarrow \overline{U}_{0,d}^{\sym}$.
    \end{enumerate}
\end{Theorem}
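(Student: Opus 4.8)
The strategy is a degeneration/variational argument in the spirit of Clemens–Ein–Voisin as adapted to the log setting by Pacienza–Rousseau. Suppose $C \subset \PP^n$ is a curve, $C \not\subset D$, that fails the numerical inequality in (1); we must show $C$ lifts to the universal line $V$ and lands in a fiber (or the undefined locus) of $\phi$. First I would set up the universal family: let $\mathcal{D} \to B$ be the family of all degree $d=2n$ hypersurfaces over an open base $B$, with universal hypersurface $\mathcal{D} \subset \PP^n \times B$, and consider the relative Kontsevich space or $\Hom$-scheme of maps $f: C \to \PP^n$ together with the combinatorial type of $f^{-1}(\mathcal{D}_b)$. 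The key cohomological input is that for a very general $b$, any curve $C_b$ violating the bound must move in a positive-dimensional family as $b$ varies — otherwise the countably many ``rigid'' violating curves would already be excluded by genericity. Thus we obtain a family of pointed curves $(f_t, C_t)$ over a base $T$ with $C_t \subset \PP^n$ and $f_t^{-1}(D)$ of constant combinatorial type, and the infinitesimal generator of this deformation gives a nonzero section of the log normal sheaf $N_{f/\PP^n}(\log D)$, or more precisely a subsheaf of $f^* T_{\PP^n}(-\log D)$ of positive degree.

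The second step is to exploit this positive-degree subsheaf via the Lazarsfeld bundle $M_1$ on $\PP^n$. The defining sequence $0 \to M_1 \to H^0(\OO_{\PP^n}(1)) \otimes \OO_{\PP^n} \to \OO_{\PP^n}(1) \to 0$ realizes $T_{\PP^n}(-1)$ as a quotient of $M_1^\vee$, and one relates $T_{\PP^n}(-\log D)$ to twists of $M_1$ (or its symmetric/exterior powers, since $\deg D = 2n = 2 \cdot \dim$ is exactly the borderline degree making $K_{\PP^n} + D$ trivial-to-big). The numerical failure of algebraic hyperbolicity, combined with stability (or semistability) of $M_1$ — this is the ``few key observations about the stability of $M_1$'' the introduction promises — forces the positive-degree sub-line-bundle of $f^* T_{\PP^n}(-\log D)$ to come from a rank-one sub of $M_1^\vee$ restricted to $C$, which geometrically is exactly a pencil of hyperplanes, i.e.\ a line $\ell \subset \PP^n$. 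Associating to each point $p \in C$ this line $\ell_p$ gives the lift $C \to V$, $p \mapsto (p, \ell_p)$.

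Finally I would show that this lifted curve lies in a fiber (or undefined locus) of $\phi: V \dashrightarrow U_{0,d}^{\sym}$. The deformation $f_t$ preserves the combinatorial type of $f^{-1}(D)$; translating through the construction, along $C$ the intersection divisor $D \cap \ell_p$ together with the marked point $p$ defines a point of $U_{0,d}^{\sym}$ that is \emph{constant} as $p$ varies — because the positive-degree deformation direction is precisely the one that moves $(p,\ell_p)$ within $V$ without changing the stable pointed curve $(D\cap \ell_p, p)$. Where the stable curve degenerates (a multiplicity-three point, i.e.\ the flex locus or a tangent line through the point of tangency) the map $\phi$ is undefined and $C$ lies in that locus instead. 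I expect the main obstacle to be the stability/semistability analysis of $M_1$ and the careful bookkeeping that converts ``a subsheaf of $f^*T_{\PP^n}(-\log D)$ of degree too large for the inequality to hold'' into ``the subsheaf is the pullback of a pencil of hyperplanes'' — handling the rank, the twist by $D$, and the borderline case $d = 2n$ (and the factor $\tfrac12$ when $n=2$) is where the genericity of $D$ and the precise Euler-characteristic count must be used, rather than in the softer parts of the argument.
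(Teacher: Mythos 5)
Your outline reproduces the paper's architecture (variational family over the space of degree-$2n$ hypersurfaces, Lazarsfeld bundles, a destabilizing/rank-one phenomenon producing an associated line, lift to $V$, constancy of the modulus), but the two pivotal steps are asserted rather than argued, and the justification you offer for the second one would not work. On the first step: the relevant bundle is $M_d$, which injects into the vertical log tangent sheaf (Proposition \ref{prop-LMinclusion}) and is surjected onto by copies of $M_1$ via multiplication by general degree-$(d-1)$ forms (Proposition \ref{prop-secdom}); case (1) of the theorem is exactly the dichotomy of Propositions \ref{prop-notsurj} and \ref{prop-m1equals2}. To extract a well-defined line $\ell(p,b)$ from the remaining case $\operatorname{rank} m_1=1$ one must show that $\ker m_1|_p$ does not depend on the chosen multiplier $P$: for $n\geq 3$ this is Clemens' lemma (needing rank of the target $\geq 2$), and for $n=2$ it is a Harder--Narasimhan analysis of the rank-two bundle $f_b^*p_1^*M_1|_{Y_b}$, whose ``bad'' case is precisely what produces the alternative $2g-2+i\geq \frac12 e$ (Lemma \ref{lem-2.10}). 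So the factor $\frac12$ arises here, not from bookkeeping of twists; your plan gestures at ``stability of $M_1$'' but supplies no mechanism for this independence, which is the crux.

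The more serious gap is the fixed-modulus step. Your argument --- that the deformation preserves the combinatorial type of $f^{-1}(D)$ and that the positive-degree direction ``moves $(p,\ell_p)$ within $V$ without changing the stable pointed curve'' --- is a non sequitur: preserving the number of intersection points says nothing about the cross-ratio of $D\cap\ell_p$ as $p$ varies along a fixed curve with $D$ fixed, and the deformations produced by the variational setup move $D$ as well, so there is no a priori identification of the deformation direction with a modulus-preserving motion inside $V$. What the kernel condition actually yields is first-order information about deformations of the hypersurface fixing $D\cap\ell$; the paper integrates this (Lemma \ref{lem-2.11}, via the Coskun--Riedl lemma) to show the fiber of the $GL(n+1)$-invariant family over a fixed point $p$ contains whole translates $F_b+H^0(\OO_{\PP^n}(d)\otimes I_\ell)$, and then constancy of the modulus is forced by a dimension count (Lemma \ref{lem-fixedModulus}): the vertical tangent space of the modulus map contains $H^0(\OO_{\PP^n}(d)\otimes I_\ell)$ together with the three $GL(2)$-directions fixing $p$, giving $\dim\operatorname{Im}\rho_\ell\leq d-2n=0$. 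This is exactly where the hypothesis $d=2n$ enters; your plan locates the use of $d=2n$ in the stability bookkeeping instead and contains no substitute for this count, so as written the final step of your proposal is missing its proof.
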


    

\subsection{Setup}
Our first goal is to set up the basic construction used by Voisin, Pacienza, Rousseau, Clemens, and Ran, reinterpreted as in \cite{CoskunRiedl, Yeong}. The main result of this subsection is Proposition \ref{prop-mainSetupProp}.

Suppose that for a very general section $D$ of $\OO_{\PP^n}(d)$, there is an integral curve $C$ in $\PP^n$ of geometric genus $g$ and degree $e$ such that $C\nsubseteq D$ and $i(C,D)=i$. Let $B_1=H^0(\PP^n, \OO_{\PP^n}(d))$. By passing to an \'etale cover $B_2$ of $B_1$, we can construct the following families:
\begin{enumerate}
	\item $\xX=\PP^n\times B_2$,
	\item the universal degree-$d$ hypersurface $\dD\subseteq \xX$ over $B_2$, and
	\item a family $\yY\subseteq \xX$ over $B_2$ of pointed curves of geometric genus $g$ and degree $e$, 
\end{enumerate}
such that $\yY\nsubseteq \dD$ and at a generic fiber parametrized by $b\in B_2$, we have $i(Y_b,D_b)=i$. After taking a resolution of $\yY/B_2$ and possibly restricting to an open set $B \subset B_2$, we obtain a smooth family $\yY/B$ and a generically injective morphism $f:\yY\rightarrow \xX$ over $B$ such that $f^{-1}(\dD)$ is a disjoint union of sections of $\yY/B$. Let us denote by $g:\dD\rightarrow \xX$ and $h:f^{-1}(\dD)\rightarrow \yY$ the natural injective morphisms over $B$. We denote the general fibers of $\yY, \dD$ and $\xX$ over $b\in B$ by $Y_b, D_b$ and $X_b=\PP^n$, respectively, and the restriction of $f$ by $f_b:Y_b\rightarrow X_b=\PP^n$. We denote by $p_1:\xX\rightarrow \PP^n$ and $p_2:\xX\rightarrow B$ the natural projection maps from $\xX$.

Our interest in log tangent sheaves is explained by the following result.

\begin{Proposition}
Let $X$ be a smooth projective variety and $D$ a smooth divisor in $X$. For $Y$ a smooth curve mapping via $f$ to $X$, we have that the degree of $N_{Y/X}(\log D)$ is equal to $2g(Y)-2+|f^{-1}(D)|-(K_X+D) \cdot Y$. In particular, for $X= \PP^n$, $D = D_b$, $i = |f^{-1}(D)|$ and $Y=Y_b$ as above, we have
\begin{equation}\label{eqn-degree}
\begin{aligned}
\deg N_{Y_b/\PP^n}(\log D_b) &=2g-2+i-(K_X + D)f_* Y_b\\
&=2g-2+i-(d-(n+1))e.
\end{aligned}
\end{equation}
\end{Proposition}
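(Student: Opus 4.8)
The plan is to deduce both equalities from the two short exact sequences defining $T_X(-\log D)$ and $N_{Y/X}(\log D)$, reducing everything to a first Chern class computation; the only input beyond linear algebra is that on a smooth curve, twisting by a reduced effective divisor coincides with the corresponding log twist.

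First I would compute $\deg f^* T_X(-\log D)$. From the defining sequence $0 \to T_X(-\log D) \to T_X \to \OO_D(D) \to 0$ we get $c_1(T_X(-\log D)) = c_1(T_X) - [D] = -(K_X + D)$, hence
\[ \deg f^* T_X(-\log D) \;=\; -(K_X+D)\cdot f_*[Y] \;=\; -(K_X+D)\cdot C, \]
where $C = f_*[Y]$. Next I would compute $\deg T_Y(-\log f^{-1}(D))$. Writing $E = f^{-1}(D)$ for the (reduced) preimage, on the curve $Y$ one has $\Omega^1_Y(\log E) = \Omega^1_Y(E)$ and dually $T_Y(-\log E) = T_Y(-E)$, so its degree is $\deg T_Y - \deg E = 2 - 2g(Y) - |f^{-1}(D)|$. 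Since $f$ is non-constant, the differential $df\colon T_Y \to f^* T_X$ is injective as a sheaf map, and as $f(E) \subseteq D$ it carries log vector fields to log vector fields; this is exactly the inclusion realizing $N_{Y/X}(\log D)$ as a quotient, i.e. there is a short exact sequence $0 \to T_Y(-\log E) \to f^* T_X(-\log D) \to N_{Y/X}(\log D) \to 0$.

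Taking degrees in this last sequence and combining the two computations gives
\[ \deg N_{Y/X}(\log D) \;=\; -(K_X+D)\cdot C \;-\; \bigl(2 - 2g(Y) - |f^{-1}(D)|\bigr) \;=\; 2g(Y) - 2 + |f^{-1}(D)| - (K_X+D)\cdot C, \]
which is the first claim. For \eqref{eqn-degree} I would then specialize: on $X = \PP^n$ we have $K_X = \OO_{\PP^n}(-(n+1))$ and $D = D_b$ of degree $d$, so $K_X + D = \OO_{\PP^n}(d-(n+1))$, giving $(K_X+D)\cdot C = (d-(n+1))e$ with $e = \deg C$; together with $|f^{-1}(D_b)| = i$ this is precisely both lines of \eqref{eqn-degree}.

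The one place that needs care — rather than being a genuine obstacle — is the bookkeeping of $|f^{-1}(D)|$: the log structure must be formed with the \emph{reduced} divisor $E$, and one must invoke the construction in the Setup, where $f^{-1}(\dD)$ is a disjoint union of sections of $\yY/B$, to know that on a general fiber $E$ is reduced with exactly $i$ points, so that no multiplicity correction enters and the generic injectivity of $f$ makes $f_*[Y] = C$ exact. Everything else is formal.
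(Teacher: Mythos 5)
Your proof is correct and follows essentially the same route as the paper: the same two defining log-tangent sequences plus the log-normal sequence, with degrees taken in the latter and the computation specialized to $\PP^n$ via $K_{\PP^n}+D_b = \OO_{\PP^n}(d-(n+1))$. The only difference is that you spell out the intermediate degree computations and the reducedness of $f^{-1}(D)$, which the paper leaves implicit.
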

\begin{proof}
We have the following sequences:
\begin{equation}\label{eqn-logtangent1}
\begin{tikzcd}
0 \ar{r} & T_{X}(-\log D)\ar{r} & T_{X} \ar{r} & \OO_{D}(D)\ar{r} & 0
\end{tikzcd}
\end{equation}
\begin{equation}\label{eqn-logtangent2}
\begin{tikzcd}
0 \ar{r} & T_{Y}(-\log D)\ar{r} & T_{Y} \ar{r} & \OO_{f^{-1}(D)}(f^{-1}(D))\ar{r} & 0
\end{tikzcd}
\end{equation}
\begin{equation}\label{eqn-lognormal}
\begin{tikzcd}[column sep=small]
0 \ar{r} & T_{Y}(-\log D)\ar{r} & f^*T_{X}(-\log D) \ar{r} & N_{Y/X}(\log D)\ar{r} & 0
\end{tikzcd}
\end{equation}

Taking degrees in (\ref{eqn-lognormal}), we get
\begin{equation} 
\deg N_{Y/X}(\log D)=2g-2+i-(K_X + D)f_* Y.
\end{equation}
\end{proof}

Thus, we can bound $2g-2+i$ if we can understand the degree of $N_{Y_b/\PP^n}(\log D_b)$, so it makes sense to study these log normal sheaves and log tangent sheaves. We will work with the log tangent sheaves $T_\xX(-\log \dD)$ and $T_\yY(-\log \dD)$, given by the following short exact sequences:
\begin{equation}\label{eqn-logtangent1fam}
\begin{tikzcd}
0 \ar{r} & T_\xX(-\log \dD)\ar{r} & T_\xX \ar{r} & \OO_\dD(\dD)\ar{r} & 0\\
0 \ar{r} & T_\yY(-\log \dD)\ar{r} & T_\yY \ar{r} & \OO_{f^{-1}(\dD)}(f^{-1}(\dD))\ar{r} & 0
\end{tikzcd}
\end{equation}
After possibly replacing $B$ by an open set, we may assume that $(\xX,\dD)$ and $(\yY,f^{-1}(\dD))$ are smooth pairs, and so the log tangent sheaves $T_\xX(-\log \dD)$ and $T_\yY(-\log \dD)$ are in fact vector bundles on $\xX$ and $\yY$, respectively.

We will also use the log normal sheaf $N_{\yY/\xX}(\log \dD)$, which appears in the following short exact sequence:
\begin{equation}\label{eqn-lognormalfam}
\begin{tikzcd}[column sep=small]
0 \ar{r} & T_\yY(-\log \dD)\ar{r} & f^*T_\xX(-\log \dD) \ar{r} & N_{\yY/\xX}(\log \dD)\ar{r} & 0
\end{tikzcd}
\end{equation}

The above construction can be done so that the families $\dD$ and $\yY$ are stable under the action of $GL(n+1)$. Hence, we can define the \emph{vertical log tangent sheaves} as the kernels $T_\xX^{\ver}(-\log \dD)$ and $T_\yY^{\ver}(-\log \dD)$ in the following short exact sequences:
\begin{equation}\label{eqn-verticalx}
\begin{tikzcd}
0 \ar{r} & T_\xX^{\ver}(-\log \dD)\ar{r} & T_\xX(-\log \dD) \ar{r} & p_1^* T_{\PP^n}\ar{r} & 0
\end{tikzcd}
\end{equation} 
\begin{equation}\label{eqn-verticaly}
\begin{tikzcd}
0 \ar{r} & T_\yY^{\ver}(-\log \dD)\ar{r} & T_\yY(-\log \dD) \ar{r} & f^*p_1^*T_{\PP^n}\ar{r} & 0
\end{tikzcd}
\end{equation}

The main goal of this subsection is the following Proposition.

\begin{Proposition}\label{prop-mainSetupProp}
Suppose that for a very general hypersurface $D$ of degree $d$ in $\PP^n$ there exists a smooth curve $Y$ of genus $g$ mapping birationally to $\PP^n$ with image of degree $e$ and meeting $D$ precisely $i$ times. Then there exists an \'{e}tale map $B \to H^0(\OO_{\PP^n}(d))$ and a $GL(n+1)$-invariant family $\yY$ over $B$ such that a general fiber $Y_b$ of $\mathcal{Y} \to B$ is a smooth genus $g$ curve mapping birationally to $\PP^n$ with degree $e$ and meeting $D_b$ precisely $i$ times. Furthermore, for a general $b \in B$ there is an exact sequence of sheaves
\[
\begin{tikzcd}[column sep=small]
0 \ar{r} & T_{\yY}^{\ver}(-\log \dD)|_{Y_b} \ar{r} & f^*T_{\xX}^{\ver}(- \log \dD)|_{Y_b} \ar{r} & N_{Y_b/\PP^n}(\log D_b) \ar{r} & 0.
\end{tikzcd} 
\]
\end{Proposition}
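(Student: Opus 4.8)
The plan is to read off the first assertions directly from the spreading-out construction set up above in this subsection, and then to produce the exact sequence by two applications of the snake lemma. For the existence of the \'etale cover $B \to H^0(\OO_{\PP^n}(d))$ and of a $PGL$-invariant family $\yY$ whose general fibre has the prescribed numerical type, one argues as above: the set of degree-$d$ hypersurfaces admitting a curve of genus $g$, degree $e$ and meeting them exactly $i$ times contains a very general point, hence is dense and $PGL$-invariant, so one may take $B$ \'etale over a suitable component of the relative Hom (or Hilbert) scheme, resolve the total space of this family of curves, and shrink $B$ so that $\yY\to B$, $\dD\to B$ and $f^{-1}(\dD)\to B$ are smooth, the pairs $(\xX,\dD)$ and $(\yY,f^{-1}(\dD))$ are smooth, $f$ is generically injective, and $f^{-1}(\dD)$ is a disjoint union of sections --- all of which can be done $PGL$-equivariantly. (If $Y\to\PP^n$ were a nontrivial multiple cover of its image, replacing $Y$ by the normalisation of its image only strengthens the eventual inequality, by Riemann--Hurwitz, so there is no loss in assuming $f$ birational onto its image.)

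To get the exact sequence, first pull \eqref{eqn-verticalx} back along $f$; since $p_1^*T_{\PP^n}$ is locally free this stays exact, giving $0\to f^*T_\xX^{\ver}(-\log\dD)\to f^*T_\xX(-\log\dD)\to f^*p_1^*T_{\PP^n}\to 0$. The map $T_\yY(-\log\dD)\to f^*T_\xX(-\log\dD)$ of \eqref{eqn-lognormalfam} is the differential of $f$, and composing it with the projection onto $f^*p_1^*T_{\PP^n}$ gives $d(p_1\circ f)$, which is exactly the surjection in \eqref{eqn-verticaly} (surjective precisely because the $PGL$-orbits in $\yY$ surject onto $\PP^n$). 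Hence \eqref{eqn-verticaly} and the pulled-back sequence form a commutative diagram with exact rows and with the identity in the right-hand column, and the snake lemma --- using \eqref{eqn-lognormalfam}, which says the middle arrow is injective with cokernel $N_{\yY/\xX}(\log\dD)$ --- shows that $T_\yY^{\ver}(-\log\dD)\to f^*T_\xX^{\ver}(-\log\dD)$ is injective with the same cokernel:
\[
0 \to T_\yY^{\ver}(-\log\dD) \to f^*T_\xX^{\ver}(-\log\dD) \to N_{\yY/\xX}(\log\dD) \to 0 .
\]

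Next I would restrict this sequence to a general fibre $Y_b$; it stays exact because generic flatness over $B$ makes the Tor-sheaf $\operatorname{Tor}_1\!\big(N_{\yY/\xX}(\log\dD),\OO_{Y_b}\big)$ vanish for general $b$. Then it only remains to identify $N_{\yY/\xX}(\log\dD)|_{Y_b}$ with $N_{Y_b/\PP^n}(\log D_b)$. Since $f$ is a morphism over $B$, the relative-to-absolute log tangent sequences $0\to T_{\yY/B}(-\log\dD)\to T_\yY(-\log\dD)\to f^*p_2^*T_B\to 0$ and $0\to T_{\xX/B}(-\log\dD)\to T_\xX(-\log\dD)\to p_2^*T_B\to 0$ --- short exact once $B$ is shrunk so that $\dD\to B$, $\yY\to B$ and $f^{-1}(\dD)\to B$ are smooth, since then $B$-directions lift to log vector fields --- fit, after pulling the second back along $f$, into another commutative ladder with the identity in the right-hand column, and the snake lemma then identifies $N_{\yY/\xX}(\log\dD)$ with the relative log normal sheaf $\coker\!\big(T_{\yY/B}(-\log\dD)\to f^*T_{\xX/B}(-\log\dD)\big)$. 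This relative sheaf commutes with base change, and because $(\xX,\dD)\to B$ and $(\yY,f^{-1}(\dD))\to B$ are families of smooth pairs one has $T_{\xX/B}(-\log\dD)|_{X_b}=T_{\PP^n}(-\log D_b)$ and $T_{\yY/B}(-\log\dD)|_{Y_b}=T_{Y_b}(-\log D_b)$, so its fibre over a general $b$ is exactly $\coker\!\big(T_{Y_b}(-\log D_b)\to f_b^*T_{\PP^n}(-\log D_b)\big)=N_{Y_b/\PP^n}(\log D_b)$. Combining the two identifications gives the claimed sequence.

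The formal diagram chases are routine, so the main obstacle is the last step: verifying that after the shrinking of $B$ the relative-to-absolute log tangent sequences really are short exact (this is where one needs $\dD\to B$, $\yY\to B$ and $f^{-1}(\dD)\to B$ to be smooth near the general fibre, so that every $B$-direction lifts to a log vector field) and that passing to the fibre over $b$ commutes with forming these cokernels and with the base-change identities $T_{\xX/B}(-\log\dD)|_{X_b}=T_{\PP^n}(-\log D_b)$ and $T_{\yY/B}(-\log\dD)|_{Y_b}=T_{Y_b}(-\log D_b)$.
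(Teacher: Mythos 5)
Your argument is correct and follows essentially the same route as the paper: the family is produced by the same spreading-out/\'etale-cover construction, the sequence $0 \to T_\yY^{\ver}(-\log \dD) \to f^*T_\xX^{\ver}(-\log \dD) \to N_{\yY/\xX}(\log \dD) \to 0$ comes from the same diagram of (\ref{eqn-lognormalfam}), (\ref{eqn-verticalx}), (\ref{eqn-verticaly}) (the paper's Proposition \ref{prop-2.2}, via the Nine Lemma rather than the snake lemma), and the restriction to a general fiber with the identification $N_{\yY/\xX}(\log \dD)|_{Y_b} \cong N_{Y_b/\PP^n}(\log D_b)$ is the paper's Proposition \ref{prop-pullback}. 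The only (cosmetic) difference is that you phrase that last identification through relative log tangent sheaves over $B$ before restricting, whereas the paper restricts first and compares with the fiberwise sequence using the trivial $\OO_{Y_b}^{\dim B}$ columns --- the same diagram in slightly different order.
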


To prove it, we begin proving relations among various log tangent sheaves.

\begin{Proposition}\label{prop-2.2}
	The quotient $\left(f^*T_\xX^{\ver}(-\log \dD)\right)\big/\left(T_\yY^{\ver}(-\log \dD)\right)$ of vertical log tangent sheaves is isomorphic to $N_{\yY/\xX}(\log \dD)$.
\end{Proposition}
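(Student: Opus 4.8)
The plan is to identify the quotient of vertical log tangent sheaves with the log normal sheaf $N_{\yY/\xX}(\log \dD)$ by comparing the two defining short exact sequences, namely \eqref{eqn-lognormalfam} and the vertical sequences \eqref{eqn-verticalx}--\eqref{eqn-verticaly}, via a diagram chase. First I would pull back \eqref{eqn-verticalx} along $f$ to obtain
\[
\begin{tikzcd}
0 \ar{r} & f^*T_\xX^{\ver}(-\log \dD)\ar{r} & f^*T_\xX(-\log \dD) \ar{r} & f^*p_1^* T_{\PP^n}\ar{r} & 0,
\end{tikzcd}
\]
which is still exact because $p_1^*T_{\PP^n}$ is locally free (so $f^*$ is exact on this sequence), and note that its quotient term agrees with the quotient term $f^*p_1^*T_{\PP^n}$ of \eqref{eqn-verticaly}. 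Then I would assemble the commutative diagram with three rows given by \eqref{eqn-verticaly}, the pulled-back \eqref{eqn-verticalx}, and \eqref{eqn-lognormalfam}, together with three columns: the left column is the inclusion $T_\yY^{\ver}(-\log \dD)\hookrightarrow f^*T_\xX^{\ver}(-\log \dD)$, the middle column is the inclusion $T_\yY(-\log \dD)\hookrightarrow f^*T_\xX(-\log \dD)$ from \eqref{eqn-lognormalfam}, and the right column is the identity on $f^*p_1^*T_{\PP^n}$. Commutativity of the squares is immediate from the construction of these maps as restrictions/pullbacks of the canonical inclusions and projections.

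Once the diagram is in place, the conclusion follows from the snake lemma (or equivalently the nine lemma): the cokernels of the three columns fit into an exact sequence, the cokernel of the right column is $0$, the cokernel of the middle column is $N_{\yY/\xX}(\log \dD)$ by definition, and the snake connecting map together with injectivity of the left column shows the cokernel of the left column, which is $\bigl(f^*T_\xX^{\ver}(-\log \dD)\bigr)\big/\bigl(T_\yY^{\ver}(-\log \dD)\bigr)$, is isomorphic to the cokernel of the middle column. To apply the snake lemma cleanly I should first check that the left column map is injective, which it is since it is the restriction of the injective middle column map to a subsheaf, and that the rows are exact, which was arranged by the smoothness assumptions on the pairs $(\xX,\dD)$ and $(\yY,f^{-1}(\dD))$ ensuring the log tangent sheaves are vector bundles.

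The main obstacle I anticipate is not conceptual but bookkeeping: verifying that the middle-column inclusion $T_\yY(-\log \dD)\hookrightarrow f^*T_\xX(-\log \dD)$ genuinely carries the vertical subsheaf $T_\yY^{\ver}(-\log \dD)$ into $f^*T_\xX^{\ver}(-\log \dD)$, i.e. that the left column is well-defined. This amounts to the compatibility $f^*p_1^*T_{\PP^n}$ receiving the same "horizontal part" from both \eqref{eqn-verticaly} and the pulled-back \eqref{eqn-verticalx} via the middle column — in other words, that the composite $T_\yY(-\log\dD)\to f^*T_\xX(-\log\dD)\to f^*p_1^*T_{\PP^n}$ equals the projection in \eqref{eqn-verticaly}. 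This follows from functoriality of the map to $T_{\PP^n}$ (the differential of $p_1\circ f = p_1|_{\yY}$ factors through $f$), but I would spell it out since it is exactly the point that makes the vertical sheaves behave well under $f$. Granting this, the snake lemma finishes the argument with no further computation.
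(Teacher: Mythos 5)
Your proposal is correct and follows essentially the same route as the paper: both assemble the commutative diagram built from \eqref{eqn-lognormalfam}, the pullback of \eqref{eqn-verticalx} along $f$, and \eqref{eqn-verticaly}, and identify the cokernel of $T_\yY^{\ver}(-\log \dD)\hookrightarrow f^*T_\xX^{\ver}(-\log \dD)$ with $N_{\yY/\xX}(\log \dD)$ by a Nine Lemma/snake-lemma chase. Your extra care about the left column being well-defined (functoriality of the projection to $f^*p_1^*T_{\PP^n}$) is the same compatibility the paper's diagram implicitly uses.
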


\begin{proof}
	The short exact sequences  (\ref{eqn-lognormalfam}), (\ref{eqn-verticalx}) and (\ref{eqn-verticaly}) fit together in the following commutative diagram, where $\kK_1$ denotes the cokernel of the natural inclusion $T_\yY^{\ver}(-\log \dD) \hookrightarrow f^*T_\xX^{\ver}(-\log \dD)$:
	\begin{equation*}
		\begin{tikzcd}[column sep=16pt]
		& &0 &0 \\
		& &f^*p_1^*T_{\PP^n} \arrow[u] \arrow[r,equal] &f^*p_1^*T_{\PP^n} \arrow[u] \\
		&0 \arrow[r] &T_\yY(-\log \dD) \arrow[u] \arrow[r] &f^*T_\xX(-\log \dD) \arrow[u] \arrow[r] &N_{\yY/\xX}(\log \dD) \arrow[r] &0\\
		&0 \arrow[r] &T_\yY^{\ver}(-\log \dD) \arrow[u] \arrow[r] &f^*T_\xX^{\ver}(-\log \dD) \arrow[u] \arrow[r] &\kK_1 \arrow[r] \arrow[u, "\simeq"] &0\\		
		& &0 \arrow[u] &0 \arrow[u] & &
		\end{tikzcd}
	\end{equation*}
	The required isomorphism follows from the Nine Lemma.	
\end{proof}

\begin{Proposition}\label{prop-pullback}
	For a generic $b\in B$, we have the isomorphism 
	$$\iota_b^*N_{\yY/\xX}(\log \dD)\simeq N_{Y_b/\PP^n}(\log D_b),$$ 
	where $\iota_b:Y_b\rightarrow \yY$ denotes the inclusion of the fiber.
\end{Proposition}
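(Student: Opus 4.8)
The plan is to reduce the statement to the compatibility of (relative) log tangent sheaves with restriction to a general fibre of $B$. Since $f\colon\yY\to\xX$ is a morphism over $B$, its differential on log tangent vectors, i.e.\ the map $\alpha\colon T_\yY(-\log\dD)\to f^*T_\xX(-\log\dD)$ of (\ref{eqn-lognormalfam}), is compatible with the canonical surjections of $T_\xX(-\log\dD)$ and $T_\yY(-\log\dD)$ onto $p_2^*T_B$ (and its $f$-pullback). Writing $T_{\xX/B}(-\log\dD)$ and $T_{\yY/B}(-\log f^{-1}\dD)$ for the kernels of these surjections, one gets a morphism from the short exact sequence $0\to T_{\yY/B}(-\log f^{-1}\dD)\to T_\yY(-\log\dD)\to f^*p_2^*T_B\to 0$ to the $f$-pullback of $0\to T_{\xX/B}(-\log\dD)\to T_\xX(-\log\dD)\to p_2^*T_B\to 0$, whose right-hand term map is the identity. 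The snake lemma then gives $N_{\yY/\xX}(\log\dD)=\coker\alpha\simeq\coker\alpha'$, where $\alpha'\colon T_{\yY/B}(-\log f^{-1}\dD)\to f^*T_{\xX/B}(-\log\dD)$ is the induced map; so it suffices to identify $\iota_b^*\coker\alpha'$ with $N_{Y_b/\PP^n}(\log D_b)$.

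For this I would use that, after shrinking $B$, the morphisms $\dD\to B$ and $f^{-1}(\dD)\to B$ are smooth: $\dD$ is smooth (being, up to the \'etale base change $B\to H^0(\OO_{\PP^n}(d))$, the total space of the Lazarsfeld bundle $M_d$ inside $\PP^n\times H^0(\OO_{\PP^n}(d))$), it is flat over $B$, and its general fibre $D_b$ is a smooth hypersurface; and $f^{-1}(\dD)$, being a disjoint union of sections of $\yY/B$, is visibly smooth over $B$. For a divisor smooth over the base, the relative log cotangent sheaf is locally free and its formation commutes with base change (\'etale-locally the divisor is $\{z_1=0\}$ for a relative coordinate $z_1$, and the relative log cotangent sheaf is free on $dz_1/z_1$ together with the other relative coordinate differentials). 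Restricting along $\iota_b\colon Y_b\hookrightarrow\yY$ and using $\xX=\PP^n\times B$ one therefore gets $f^*T_{\xX/B}(-\log\dD)|_{Y_b}\simeq f_b^*T_{\PP^n}(-\log D_b)$ and $T_{\yY/B}(-\log f^{-1}\dD)|_{Y_b}\simeq T_{Y_b}(-\log D_b)$, with the restriction of $\alpha'$ being the differential $df_b$ of $f_b$ on log tangent vectors.

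Applying the right-exact functor $\iota_b^*$ to the presentation $T_{\yY/B}(-\log f^{-1}\dD)\xrightarrow{\alpha'}f^*T_{\xX/B}(-\log\dD)\to N_{\yY/\xX}(\log\dD)\to 0$ and inserting these identifications yields the right-exact sequence $T_{Y_b}(-\log D_b)\xrightarrow{df_b}f_b^*T_{\PP^n}(-\log D_b)\to\iota_b^*N_{\yY/\xX}(\log\dD)\to 0$, which is exactly the sequence (\ref{eqn-lognormal}) defining $N_{Y_b/\PP^n}(\log D_b)$; comparing cokernels gives the asserted isomorphism. I expect the one genuine point to watch to be the base-change property of log tangent sheaves: it really does require the divisors to stay smooth over $B$ near the general point of $B$---hence the passage to general $b$ and the shrinking of $B$---since over a locus where $D_b$ degenerates the log tangent sheaf can jump. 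One should also keep in mind that neither $N_{\yY/\xX}(\log\dD)$ nor $N_{Y_b/\PP^n}(\log D_b)$ is locally free in general (there is torsion at the ramification points of $f$, resp.\ $f_b$), so the whole argument must be run with right-exact presentations rather than by restricting short exact sequences.
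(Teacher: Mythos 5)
Your argument is correct and is essentially the paper's proof in reorganized form: the paper restricts the defining sequence (\ref{eqn-lognormalfam}) to $Y_b$ (exact by generic flatness) and applies the Nine Lemma, whose columns are exactly the fiberwise restrictions of your relative-versus-absolute log tangent sequences over $B$, i.e.\ the same base-change facts for $T_\xX(-\log \dD)$ and $T_\yY(-\log \dD)$ coming from $\dD$ and $f^{-1}(\dD)$ being smooth over $B$ near a general $b$. Quotienting by the $T_B$-directions before restricting and using right-exactness of $\iota_b^*$ instead of generic flatness is a harmless repackaging of the same argument.
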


\begin{proof}
	Recall that $T_\yY(-\log \dD)$ and $T_\xX(-\log \dD)$ are vector bundles. By generic flatness, the restriction of (\ref{eqn-lognormalfam}) to the general fiber $Y_b$ remains exact. Hence, we have the short exact sequence
	\begin{equation*}
		0 \rightarrow \iota_b^*T_\yY(-\log \dD)\rightarrow \iota_b^*f^*T_\xX(-\log \dD) \rightarrow \iota_b^*N_{\yY/\xX}(\log \dD)\rightarrow 0,
	\end{equation*}
	which together with (\ref{eqn-lognormal}) extend to the exact diagram:
	\begin{equation*}
		\begin{tikzcd}[column sep=18pt]
		& &0 &0 \\
		& &\OO_{Y_b}^N \arrow[u] \arrow[r,equal] &\OO_{Y_b}^N \arrow[u] \\
		&0 \arrow[r] &\iota_b^*T_\yY(-\log \dD) \arrow[u] \arrow[r] &\iota_b^*f^*T_\xX(-\log \dD) \arrow[u] \arrow[r] &\iota_b^*N_{\yY/\xX}(\log \dD) \arrow[r] &0\\
		&0 \arrow[r] &T_{Y_b}(-\log D_b) \arrow[u] \arrow[r] &f_b^*T_{\PP^n}(-\log D_b) \arrow[u] \arrow[r] &N_{Y_b/\PP^n}(\log D_b) \arrow[r] \arrow[u, "\simeq"] &0\\		
		& &0 \arrow[u] &0 \arrow[u] & &
		\end{tikzcd}
	\end{equation*}
	Note that $N=\dim B$, and that the columns come from $Y_b$ and $D_b$ being fibers of the fibrations $\yY$ and $\dD$, respectively. Therefore, the required isomorphism follows from applying the Nine Lemma to this diagram.
\end{proof}

\begin{proof}[Proof of Proposition \ref{prop-mainSetupProp}]
We already constructed $B$ and $\yY$ in the discussion at the beginning of this section. It remains to show that the log tangent sheaf $N_{Y_b/\PP^n}(\log D_b)$ is the cokernel of the map $T_{\yY}^{\ver}|_{Y_b} \to f^* T_{\xX}^{\ver}(-\log \dD)|_{Y_b} $. By Proposition \ref{prop-pullback}, $N_{Y_b/\PP^n}(\log D_b)$ is the restriction of $N_{\yY/\xX}(\log \dD)$ to $Y_b$. By Proposition \ref{prop-2.2}, we see that $N_{\yY/\xX}(\log \dD)$ is the cokernel of the map of vertical log tangent sheaves. The result follows.
\end{proof}

\subsection{Lazarsfeld bundles}

We now work toward interpreting the sequence of Proposition \ref{prop-mainSetupProp} in terms of polynomials. The main result of this subsection is the following.

\begin{Proposition}\label{prop-mainLazarsfeldProp}
Suppose that for a very general hypersurface $D_b$ of degree $d$ in $\PP^n$ there exists a smooth curve $Y_b$ of genus $g$ mapping birationally to $\PP^n$ with image of degree $e$ and $|f^{-1}(D_b)| = i$. Then as in Proposition \ref{prop-mainSetupProp}, there is an exact sequence of sheaves
\[
\begin{tikzcd}[column sep=small]
0 \ar{r} & T_{\yY}^{\ver}(-\log \dD)|_{Y_b} \ar{r} & f^*T_{\xX}^{\ver}(- \log \dD)|_{Y_b} \ar{r} & N_{Y_b/\PP^n}(\log D_b) \ar{r} & 0.
\end{tikzcd} 
\]
If $2g-2+i < (d-(2n-1))e$, then $f_b^* p_1^* M_d$ dominates $N_{Y_b/\PP^n}(\log D_b)$ under the natural inclusion $M_d \to T_{\xX}(- \log \dD)$. Thus, for some general polynomials $P_1, \dots, P_s$, we have a surjection $f_b^*p_1^* M_1^{\oplus s} \to N_{Y_b/\PP^n}(\log D_b)$.
\end{Proposition}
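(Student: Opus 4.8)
The plan is to thread the Lazarsfeld bundle $M_d$ through the exact sequence of Proposition \ref{prop-mainSetupProp} --- whose conclusion already supplies the displayed sequence, with $N_{Y_b/\PP^n}$ read as the log normal bundle $N_{Y_b/\PP^n}(\log D_b)$ --- and then to factor $M_d$ through copies of $M_1$.

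\textbf{Locating $M_d$ in $T_\xX^{\ver}(-\log\dD)$.} Because $\xX = \PP^n \times B$ with $B$ étale over $H^0(\OO_{\PP^n}(d))$, the vertical tangent bundle $T_\xX^{\ver} = p_2^* T_B$ is trivial, $\cong \OO_\xX \otimes H^0(\OO_{\PP^n}(d))$; writing $\dD = \{F = 0\}$ for the universal polynomial, one checks that the normal map $T_\xX^{\ver} \to \OO_\dD(\dD)$ carries a polynomial family to its restriction to $\dD$, so $T_\xX^{\ver}(-\log\dD)$ is the subsheaf of families whose associated section of $p_1^*\OO(d)$ is divisible by $F$. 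Every section of $M_d$ vanishes at its base point, so $p_1^* M_d \subseteq T_\xX^{\ver}(-\log\dD)$; a Chern-class computation --- using $\det T_\xX^{\ver}(-\log\dD) = \OO_\xX(-\dD)$, $\det p_1^* M_d = p_1^*\OO(-d)$, and $\OO_\xX(\dD) = p_1^*\OO(d)$ --- identifies the quotient with the trivial line bundle:
\[ 0 \longrightarrow p_1^* M_d \longrightarrow T_\xX^{\ver}(-\log\dD) \longrightarrow \OO_\xX \longrightarrow 0 . \]

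\textbf{The domination.} Pulling this back along $f$, restricting to $Y_b$, and comparing with Proposition \ref{prop-mainSetupProp}, a diagram chase identifies the cokernel of $f_b^* p_1^* M_d|_{Y_b} \to N_{Y_b/\PP^n}(\log D_b)$ with the cokernel of the induced map $T_\yY^{\ver}(-\log\dD)|_{Y_b} \to \OO_{Y_b}$; in particular this cokernel is a quotient of $\OO_{Y_b}$, so the domination reduces to the surjectivity of $T_\yY^{\ver}(-\log\dD)|_{Y_b} \to \OO_{Y_b}$. For this I would use the $GL(n+1)$-equivariance built into the construction of $\yY$: the central $\GG_m \subseteq GL(n+1)$ acts trivially on $\PP^n$ and by scaling on $H^0(\OO_{\PP^n}(d))$, so its infinitesimal generator --- the Euler field on $B$, which as a polynomial family is $F$ itself --- lifts to a vertical logarithmic field $\tilde\xi \in H^0(\yY, T_\yY^{\ver}(-\log\dD))$ whose image in $\OO_\yY$ is the constant $1$, hence nowhere vanishing; this gives the surjectivity. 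The numerical hypothesis $2g-2+i < (d-(2n-1))e$ is the standing assumption here; it forces $\deg N_{Y_b/\PP^n}(\log D_b) = 2g-2+i-(d-n-1)e < (2-n)e \le 0$, so that $N_{Y_b/\PP^n}$ is dominated by a bundle of negative degree (and, if one prefers to avoid invoking the $\GG_m$, the hypothesis can instead be fed into a codimension count for the locus in $\yY$ where $T_\yY^{\ver}(-\log\dD) \to \OO_\yY$ fails to be surjective, which is a proper closed subset since $\dim\yY = \dim B + 1$ is large, hence avoided by the general fibre $Y_b$).

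\textbf{From $M_d$ to $M_1$.} For $Q \in H^0(\OO_{\PP^n}(d-1))$, multiplication $\ell \mapsto Q\ell$ is a bundle map $M_1 \to M_d$, and since the degree-$d$ part of the ideal of a point is spanned by products of a linear form through the point with forms of degree $d-1$, taking $Q_1,\dots,Q_s$ to be a basis of $H^0(\OO_{\PP^n}(d-1))$ gives a surjection $M_1^{\oplus s} \to M_d$ over all of $\PP^n$. Composing with the domination above yields the claimed surjection $(f_b^* p_1^* M_1)^{\oplus s} \to N_{Y_b/\PP^n}$. I expect the domination step to be the main obstacle --- specifically, checking that the quotient in the first step really is $\OO_\xX$ and that the Euler field is globally defined, vertical, logarithmic, and maps to a unit --- while the remaining steps are essentially formal.
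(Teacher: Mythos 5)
Your proposal is correct in substance, but it reaches the key ``domination'' step by a genuinely different route from the paper. The paper never attempts to show that $T_\yY^{\ver}(-\log \dD)|_{Y_b}\to \OO_{Y_b}$ is surjective; it argues by contraposition (Proposition \ref{prop-notsurj}): if $\alpha_b\colon f_b^*p_1^*M_d\to N_{Y_b/\PP^n}(\log D_b)$ were not generically surjective, its image would have rank at most $n-2$, hence be dominated by at most $n-2$ copies of $f_b^*p_1^*M_1$ and so have degree at least $-(n-2)e$; combined with $\deg \iota_b^*\coker\alpha\ge 0$ (the cokernel being a quotient of $\OO_{Y_b}$), this forces $2g-2+i\ge (d-(2n-1))e$, contradicting the hypothesis. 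You instead exploit the $GL(n+1)$-invariance of the construction: the central $\GG_m$ scales the defining equation, its generator is the tautological section $F$ of $H^0(\OO_{\PP^n}(d))\otimes\OO_\xX$, which lies in $T_\xX^{\ver}(-\log\dD)$ and maps to a nowhere-vanishing section of the cokernel $p_1^*\OO_{\PP^n}$, so that tangency of this field to $\yY$ yields $\coker\alpha=0$, i.e.\ unconditional domination. This is compatible with the paper, which asserts $GL(n+1)$-stability of $\dD$ and $\yY$ and in fact uses the central directions in Lemma \ref{lem-fixedModulus}; your version buys a stronger, hypothesis-free conclusion, while the paper's contrapositive is insensitive to how carefully the equivariance is arranged. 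The cost is that your whole argument hinges on the Euler field lifting through the \'etale cover $B\to H^0(\OO_{\PP^n}(d))$ and the resolution $\yY\to f(\yY)$ as a \emph{vertical logarithmic} field tangent to $\yY$: if the generically finite family over $B$ were chosen by a generic (non-invariant) slice of the incidence variety, the curve would genuinely move to first order along the scaling direction and the field would not be tangent to $\yY$. So you must state explicitly that the family is chosen invariant under the center and the resolution equivariantly; this is exactly the paper's standing invariance assumption, but it is the load-bearing point of your proof.

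Two smaller issues. First, a Chern-class computation does not by itself identify the quotient $T_\xX^{\ver}(-\log\dD)/p_1^*M_d$ with $\OO_\xX$ (it only pins down the rank and $c_1$ of a sheaf that a priori need not be locally free); you need the snake-lemma argument of Proposition \ref{prop-LMinclusion}, using the column $0\to p_1^*\OO_{\PP^n}\xrightarrow{\cdot F} p_1^*\OO_{\PP^n}(d)\to \OO_\dD(\dD)\to 0$. Second, your proposed fallback is not a proof: the locus where $T_\yY^{\ver}(-\log\dD)\to\OO_\yY$ fails to be surjective has no a priori reason to be a proper closed subset (the map could fail to be surjective generically), so the largeness of $\dim\yY$ gives nothing; without the $\GG_m$ you are back to the paper's numerical contrapositive. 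The final step, the surjection $M_1^{\oplus s}\onto M_d$ by multiplication by degree $d-1$ forms, agrees with Proposition \ref{prop-secdom}.
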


Our first step is to compare the log vertical tangent sheaf of $\xX$ to the Lazarsfeld bundle $M_d$.

\begin{Proposition}\label{prop-LMinclusion}
	The pullback $p_1^*M_d$ of the Lazarsfeld bundle naturally injects into the vertical log tangent sheaf $T_\xX^{\ver}(-\log \dD)$, and the cokernel of $p_1^*M_d \hookrightarrow T_\xX^{\ver}(-\log \dD)$ is $p_1^*\OO_{\PP^n}.$
\end{Proposition}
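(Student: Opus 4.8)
The plan is to identify $T_\xX^{\ver}(-\log\dD)$ explicitly as a subsheaf of $T_\xX$ and to recognize it directly in terms of the evaluation map defining $M_d$ in (\ref{eqn-lm}). Since every object in this subsection is pulled back along the étale map $B\to H^0(\PP^n,\OO_{\PP^n}(d))$ from the corresponding universal object over the affine space $H^0(\PP^n,\OO_{\PP^n}(d))$, the whole sequence (\ref{eqn-verticalx}) is such a pullback, so it suffices to prove the statement with $B=H^0(\PP^n,\OO_{\PP^n}(d))$ itself; in particular $\mathrm{Pic}(B)=0$ and $T_B$ is canonically the trivial bundle with fiber $B$. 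On the product $\xX=\PP^n\times B$ one has $T_\xX=p_1^*T_{\PP^n}\oplus p_2^*T_B$ with $dp_1$ the first projection, so combining (\ref{eqn-verticalx}) with (\ref{eqn-logtangent1fam}) gives
\[
T_\xX^{\ver}(-\log\dD)\;=\;\ker\!\big(dp_1\big)\cap\ker\!\big(\delta\big)\;=\;\ker\!\big(\,p_2^*T_B\xrightarrow{\ \delta\ }\OO_\dD(\dD)\,\big),
\]
where $\delta\colon T_\xX\to\OO_\dD(\dD)$ is the map in (\ref{eqn-logtangent1fam}). Under the canonical identification $p_2^*T_B\cong H^0(\OO_{\PP^n}(d))\otimes\OO_\xX=p_1^*\big(H^0(\OO_{\PP^n}(d))\otimes\OO_{\PP^n}\big)$, the first task is to show that $\delta|_{p_2^*T_B}$ is the composite of $p_1^*(\mathrm{ev})$ with the restriction map $p_1^*\OO_{\PP^n}(d)\to\OO_\dD(\dD)$.

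This identification is the crux of the argument, and it rests on one elementary observation: the universal section cutting out $\dD$ is $F\in H^0\big(\xX,p_1^*\OO_{\PP^n}(d)\big)$ with $F(x,s)=s(x)$, which is \emph{linear} in the moduli variable $s$. Consequently the derivative of $F$ at $(x,s)$ in a vertical direction $t\in T_sB=H^0(\OO_{\PP^n}(d))$ is simply $t(x)\in\OO_{\PP^n}(d)_x$, i.e.\ the value of $\mathrm{ev}(t)$ at $x$, independently of $s$. One must phrase this intrinsically, since $F$ is a section of a line bundle rather than a function, but $p_1^*\OO_{\PP^n}(d)$ carries a canonical partial connection in the $B$-direction (it is pulled back from $\PP^n$), so the vertical derivative of $F$ is unambiguous and $\delta|_{p_2^*T_B}$ is exactly $p_1^*(\mathrm{ev})$ followed by restriction to $\dD$. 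Granting this, $p_1^*M_d=\ker(p_1^*\mathrm{ev})$ is visibly contained in $T_\xX^{\ver}(-\log\dD)$, which is the kernel of $p_1^*\mathrm{ev}$ followed by restriction; this is the asserted inclusion, and it is the natural one coming from (\ref{eqn-lm}) and the product structure on $\xX$.

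For the cokernel one chases kernels. Since $p_1^*\mathrm{ev}$ is surjective onto $p_1^*\OO_{\PP^n}(d)$ with kernel $p_1^*M_d$, and $\delta$ kills $p_1^*M_d$, a short diagram chase gives
\[
T_\xX^{\ver}(-\log\dD)\big/p_1^*M_d\;\cong\;\ker\!\big(p_1^*\OO_{\PP^n}(d)\longrightarrow\OO_\dD(\dD)\big),
\]
and the right-hand side is the image of multiplication by $F$, namely $\OO_\xX\xrightarrow{\ \cdot F\ }p_1^*\OO_{\PP^n}(d)$, which is injective because $F$ is a nonzerodivisor (its zero scheme is the divisor $\dD$). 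Hence the cokernel is $\OO_\xX=p_1^*\OO_{\PP^n}$, as claimed; equivalently one obtains the short exact sequence $0\to p_1^*M_d\to T_\xX^{\ver}(-\log\dD)\to p_1^*\OO_{\PP^n}\to 0$.

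The main obstacle is the second paragraph: verifying cleanly, without coordinates, that $\delta|_{p_2^*T_B}\colon p_2^*T_B\to\OO_\dD(\dD)$ coincides with $p_1^*(\mathrm{ev})$ restricted to $\dD$. This is ``obvious'' from the linearity of $F$ in the moduli direction, but making it rigorous requires either a careful local-trivialization computation or an intrinsic description of $\OO_\dD(\dD)$ as $\OO_\dD\otimes p_1^*\OO_{\PP^n}(d)$ together with the first-order expansion of $F$ along $B$. Once that identification is in place, the remaining steps are routine kernel–cokernel bookkeeping.
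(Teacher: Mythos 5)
Your proof is correct and follows essentially the same route as the paper: you identify $T_\xX^{\ver}(-\log \dD)$ as the kernel of the composite $H^0(\OO_{\PP^n}(d))\otimes\OO_\xX \xrightarrow{p_1^*\mathrm{ev}} p_1^*\OO_{\PP^n}(d) \to \OO_\dD(\dD)$ and compare with the pulled-back sequence defining $M_d$ and the $F$-twisted ideal sequence of $\dD$, which is exactly the content of the paper's two diagrams (there handled via the Nine and Snake Lemmas). The one genuine addition is that you explicitly verify, via linearity of the universal section in the moduli direction, the commutativity that the paper only asserts as ``natural,'' namely that the vertical part of $T_\xX \to \OO_\dD(\dD)$ is evaluation followed by restriction; this is a worthwhile clarification but not a different argument.
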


\begin{proof}
First, we have the following commutative diagram, which contains the short exact sequences (\ref{eqn-logtangent1fam}) and (\ref{eqn-verticalx}). $\kK_2$ denotes the cokernel of the natural inclusion $T_\xX^{\ver}(-\log \dD) \hookrightarrow p_2^*T_B\simeq H^0(\OO_{\PP^n}(d))\otimes \OO_\xX$. The middle column comes from the natural splitting of $T_\xX\simeq p_1^*T_{\PP^n}\oplus p_2^*T_B$.
\begin{equation}\label{diag-1}
	\begin{tikzcd}[column sep=14pt, row sep=12pt]
		& &0 &0 \\
		& &p_1^*T_{\PP^n} \arrow[u] \arrow[r,equal] &p_1^*T_{\PP^n} \arrow[u] \\
		&0 \arrow[r] &T_\xX(-\log \dD) \arrow[u] \arrow[r] &T_\xX \arrow[u] \arrow[r] &\OO_\dD(\dD) \arrow[r] &0\\
		&0 \arrow[r] &T_\xX^{\ver}(-\log \dD) \arrow[u] \arrow[r] &p_2^*T_B\simeq H^0(\OO_{\PP^n}(d))\otimes \OO_\xX \arrow[u] \arrow[r] &\kK_2 \arrow[r] \arrow[u, "\simeq"] &0\\		
		& &0 \arrow[u] &0 \arrow[u] & &
	\end{tikzcd}
\end{equation}
It follows from the Nine Lemma that $\kK_2\simeq \OO_\dD(\dD)$.

The bottom row of the above diagram induces a natural inclusion $p_1^*M_d \hookrightarrow T_\xX^{\ver}(-\log \dD)$ via the natural restriction map $p_1^*\OO_{\PP^n}(d) \to \OO_{\mathcal D}(\mathcal D)$. This gives the following commutative diagram, which contains the bottom row of (\ref{diag-1}) and the pullback of the short exact sequence (\ref{eqn-lm}) via the projection $p:\xX\rightarrow \PP^n$. $\kK_3$ denotes the cokernel of $p_1^*M_d \hookrightarrow T_\xX^{\ver}(-\log \dD)$. The right column is obtained by twisting the standard short exact sequence $0 \rightarrow \OO_\xX(-\dD) \rightarrow \OO_\xX \rightarrow \OO_\dD\rightarrow 0$
by $\OO_\xX(\dD)$. Observe that although the first two terms in the right column are both sheaves pulled back from $\PP^n$, the map between them is not pulled back from $\PP^n$.
\begin{equation}\label{diag-2}
	\begin{tikzcd}[column sep=13pt, row sep=12pt]
		& & & &0 \arrow[d]\\
		& &0 \arrow[d] & &p_1^*\OO_{\PP^n} \arrow[d]\\
		&0 \arrow[r] &p_1^*M_d \arrow[d] \arrow[r] &H^0(\OO_{\PP^n}(d)) \otimes \OO_\xX\arrow[r] \arrow[d,"="] &p_1^*\OO_{\PP^n}(d) \arrow[r] \arrow[d] &0\\
		&0 \arrow[r] &T_\xX^{\ver}(-\log \dD) \arrow[d] \arrow[r] &H^0(\OO_{\PP^n}(d))\otimes \OO_\xX \arrow[r] &\OO_{\dD}(\dD) \arrow[r] \arrow[d] &0\\
		& &\kK_3 \arrow[d] & &0\\
		& &0
	\end{tikzcd}
\end{equation}

It follows from the Snake Lemma that $\kK_3\simeq p_1^*\OO_{\PP^n}$.

\end{proof}


By Proposition \ref{prop-2.2}, there is a surjective map $$f^*T_\xX^{\ver}(-\log \dD) \rightarrow N_{\yY/\xX}(\log \dD).$$
Let $\alpha:f^*p_1^*M_d\rightarrow N_{\yY/\xX}(\log \dD)$ denote the restriction of this map to $f^*p_1^*M_d$. Viewed together with the pullback of the left column of (\ref{diag-2}) via $f:\yY\rightarrow \xX$, we obtain the following commutative diagram:

\begin{equation}\label{diag-3}
	\begin{tikzcd}[column sep=18pt]
	&0 \arrow[r] &f^*p_1^*M_d \arrow[d, "\alpha"] \arrow[r] &f^*T_\xX^{\ver}(-\log \dD) \arrow[d] \arrow[r] &f^*p_1^*\OO_{\PP^n} \arrow[d] \arrow[r] &0\\
	&0 \arrow[r] &\text{Im} \;\alpha \arrow[d] \arrow[r] &N_{\yY/\xX}(\log \dD) \arrow[r] \arrow[d] &\coker \alpha \arrow[r] \arrow[d] &0\\
	& &0 &0 &0
	\end{tikzcd}
\end{equation}

For $b$ general, the diagram (\ref{diag-3}) pulls back via $\iota_b:Y_b\hookrightarrow \yY$ to the following exact diagram on $Y_b$:

\begin{equation}\label{diag-4}
	\begin{tikzcd}[column sep=18pt]
		&0 \arrow[r] &f_b^*p_1^*M_d \arrow[d, "\alpha_b"] \arrow[r] &f_b^*T_\xX^{\ver}(-\log \dD) \arrow[d] \arrow[r] &f_b^*p_1^*\OO_{\PP^n} \arrow[d] \arrow[r] &0\\
		&0 \arrow[r] &\iota_b^*\text{Im} \;\alpha \arrow[d] \arrow[r] &N_{Y_b/\PP^n}(\log D_b) \arrow[r] \arrow[d] &\iota_b^*\coker \alpha \arrow[r] \arrow[d] &0\\
		& &0 &0 &0
	\end{tikzcd}
\end{equation}

It follows from the surjection in the right column that 
\begin{equation}\label{eqn-cokerdegree}
	\deg \iota_b^*\coker \alpha \geq 0.
\end{equation}
So in order to obtain a lower bound for $\deg N_{Y_b/\PP^n}(\log D_b)$ in light of (\ref{eqn-degree}), we focus on the sheaf $\iota_b^*\text{Im} \;\alpha$ instead. The following two propositions express the idea that for this purpose, we can view $M_1$ as a simpler building block for $\iota_b^*\text{Im} \;\alpha$.

\begin{Proposition}\label{prop-secdom}
	For some integer $s>0$, there is a surjection 
	\begin{equation*}
	M_1^{\oplus s} \longrightarrow M_d
	\end{equation*} 
	given by multiplication by generic polynomials of degree $d-1.$
\end{Proposition}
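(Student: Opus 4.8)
The plan is to realize the surjection $M_1^{\oplus s} \to M_d$ as the restriction to kernels of the obvious surjection of trivial bundles given by multiplication. Concretely, fix $s$ general polynomials $Q_1, \dots, Q_s$ of degree $d-1$ on $\PP^n$, and consider the map
\[
\mu: H^0(\OO_{\PP^n}(1))^{\oplus s} \otimes \OO_{\PP^n} \longrightarrow H^0(\OO_{\PP^n}(d)) \otimes \OO_{\PP^n}, \qquad (L_1, \dots, L_s) \mapsto \sum_{j} Q_j L_j,
\]
a map of trivial bundles. Composing with the evaluation map $H^0(\OO_{\PP^n}(d))\otimes \OO_{\PP^n} \to \OO_{\PP^n}(d)$ recovers the evaluation map on each summand $H^0(\OO_{\PP^n}(1))\otimes \OO_{\PP^n} \to \OO_{\PP^n}(1) \xrightarrow{\,\cdot Q_j\,} \OO_{\PP^n}(d)$, so $\mu$ carries $M_1^{\oplus s}$ (the direct sum of kernels of the evaluation maps) into $M_d$ (the kernel in \eqref{eqn-lm}). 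Denote by $\bar\mu: M_1^{\oplus s} \to M_d$ the induced map on kernels. First I would record this as a morphism of short exact sequences and note that $\bar\mu$ is exactly "multiplication by the $Q_j$'s" on fibers: over a point $p$, it sends a tuple of linear forms vanishing at $p$ to $\sum_j Q_j L_j$, which indeed vanishes at $p$.

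The substance is surjectivity of $\bar\mu$, which by Nakayama can be checked fiberwise: for each $p \in \PP^n$ I must show every degree-$d$ polynomial vanishing at $p$ can be written as $\sum_{j=1}^s Q_j L_j$ with each $L_j$ linear and vanishing at $p$. After a coordinate change assume $p = [1:0:\cdots:0]$, so the fiber of $M_d$ at $p$ is the space of degree-$d$ forms with no pure $x_0^d$ term, equivalently the ideal generated by $x_1, \dots, x_n$ in degree $d$. It suffices to show that for general $Q_1, \dots, Q_s$ the products $Q_j x_k$ ($1 \le j \le s$, $1 \le k \le n$) span this space; equivalently, that multiplication $H^0(\OO(d-1))^{\oplus s} \to H^0(\OO(d)/(x_0^d))$ restricted appropriately is surjective, which for $s$ large enough (e.g. $s = n+1$ works, taking $Q_j$ to be generic, or even the coordinate monomials suffice after a small argument) follows from the fact that the $x_k$ with $k \ge 1$ already generate the relevant graded piece and a single general $Q$ of degree $d-1$ together with the linear forms spans a large enough subspace; iterating over a few general $Q_j$ fills everything. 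I would phrase this cleanly by noting that the cokernel of $\bar\mu$ is a coherent sheaf whose fiber dimension is upper semicontinuous, so it is enough to exhibit one point (any point, by homogeneity) where the fiber map is surjective, and then observe that $GL(n+1)$-homogeneity propagates this to all of $\PP^n$.

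The main obstacle is pinning down the precise value of $s$ and verifying the fiberwise spanning statement without a messy monomial computation; the clean route is the homogeneity-plus-semicontinuity argument, which reduces everything to a single explicit fiber. A secondary point to be careful about is that "general polynomials of degree $d-1$" must be chosen so that the induced map on kernels is genuinely surjective and not merely generically so — but since surjectivity is an open condition on the choice of $(Q_1,\dots,Q_s)$ and we have exhibited it at one fiber for one choice, it holds for general $(Q_1,\dots,Q_s)$ simultaneously at every fiber. No higher cohomology input is needed, since everything is a statement about maps of globally generated bundles checked on fibers.
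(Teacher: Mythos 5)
Your construction of $\bar\mu$ as the map induced on kernels, and the reduction of surjectivity to the fiberwise statement that every degree-$d$ form vanishing at $p$ lies in $\sum_j Q_j\cdot(I_p)_1$, is exactly the right frame and is what the paper does. The gap is in how you verify that fiberwise statement at \emph{every} point. First, the suggested value $s=n+1$ cannot work for any choice of $Q_j$: the fiber of $M_1^{\oplus s}$ has dimension $sn$ while the fiber of $M_d$ has dimension $\binom{n+d}{n}-1$, so surjectivity forces $sn\ge\binom{n+d}{n}-1$ (for $n=2$, $d=4$ this already requires $s\ge 7$). Second, the propagation step fails: once $Q_1,\dots,Q_s$ are fixed, $\bar\mu$ is not $GL(n+1)$-equivariant (moving $p$ by the group action also moves the $Q_j$), so homogeneity gives nothing; and openness of surjectivity in the choice of $(Q_1,\dots,Q_s)$, combined with surjectivity at a single exhibited fiber, only yields that a general tuple is surjective near that fiber, i.e.\ generic surjectivity on $\PP^n$. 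The condition ``surjective at every fiber'' is indeed open in the tuple (by properness of $\PP^n$ the image of the support of the cokernel is closed), but you never exhibit a tuple for which it holds, so openness has nothing to propagate.

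The repair is the one-line argument the paper uses, which eliminates all genericity bookkeeping: take $Q_1,\dots,Q_s$ to be a basis of $H^0(\OO_{\PP^n}(d-1))$, so $s=\binom{n+d-1}{n}$ (any spanning $s$-tuple, hence a general one, works equally well, which matches the statement). Then at every $p$ the image of the fiber map is $\sum_j Q_j\cdot(I_p)_1=H^0(\OO_{\PP^n}(d-1))\cdot(I_p)_1$, and since the homogeneous ideal of a point is generated by its linear forms, this equals $(I_p)_d$, the full fiber of $M_d$ (in your coordinates, every degree-$d$ form with no $x_0^d$ term is $\sum_{k\ge1}x_kG_k$). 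A map of vector bundles that is surjective on every fiber is a surjection of sheaves, so $\bar\mu:M_1^{\oplus s}\to M_d$ is onto. With that substitution your argument is complete and coincides with the paper's proof.
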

\begin{proof}
Let $P_1, \dots, P_s$ be a basis for $H^0(\OO_{\PP^n}(d-1))$. Then multiplication by $P_i$ gives a map $M_1 \to M_d$. Taking the direct sum of all such maps, we get a surjection $M_1^{\oplus s} \to M_d$.
\end{proof}

\begin{Proposition}\label{prop-bounds}
	Let $b\in B$ be a generic element, and let $N$ be a sheaf on the curve $Y_b$. Suppose that there is a generically surjective map
	\begin{equation*}
	\beta:f_b^*p_1^*M_1^{\oplus t}\longrightarrow N
	\end{equation*} for some $t$. Then, 
	\begin{equation}\label{eqn-bounds1}
	\deg N\geq -te.
	\end{equation}
	In particular, setting $N=\iota_b^*\text{Im} \;\alpha,$ we obtain
	\begin{equation}\label{eqn-bounds2}
	2g-2+i\geq (d-(n+1+t))e.
	\end{equation}
\end{Proposition}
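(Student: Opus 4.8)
The plan is to prove the inequality \eqref{eqn-bounds1} first, and then deduce \eqref{eqn-bounds2} by substituting $N = \iota_b^*\mathrm{Im}\,\alpha$ and chasing degrees through the earlier exact sequences. For \eqref{eqn-bounds1}, the key geometric input is that for generic $b\in B$, the curve $Y_b$ moves in a family sweeping out all of $\PP^n$ (indeed the family $\yY/B$ is $PGL$-invariant), so $f_b\colon Y_b\to\PP^n$ is a generic curve of degree $e$ in $\PP^n$; in particular we may assume $f_b$ is a point of a component of the Kontsevich/Hilbert scheme whose general member has image meeting a generic hyperplane transversally in $e$ points, none of which are base. The sheaf $f_b^*p_1^*M_1$ has degree $-e$: this follows from $\deg M_1 = -1$ on any line, hence $\deg M_1|_{f_b(Y_b)} = -e$ by the projection formula, together with $f_b$ being generically injective so that degrees are preserved under pullback. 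Thus $f_b^*p_1^*M_1^{\oplus t}$ has degree $-te$.

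The core point is then: \emph{if a vector bundle (or coherent sheaf) $E$ on a smooth projective curve admits a surjection onto a coherent sheaf $N$, and $E$ has the property that every quotient line bundle of $E$ has nonnegative degree, then $\deg N \ge \deg E$ fails in general} — so this naive approach is not quite enough, and instead I would argue via semistability. The right statement to use is that $M_1$ is a \emph{stable} bundle on $\PP^n$ (this is classical, and is precisely the "stability of the Lazarsfeld bundle" alluded to in the introduction), so its restriction $f_b^*p_1^*M_1$ to the generic curve $Y_b$ is semistable of slope $-e/n$ by a Mehta–Ramanathan type restriction theorem — here one uses that $Y_b$ is a generic member of a covering family, which is exactly the hypothesis "$b\in B$ generic". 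A semistable bundle of negative slope has no quotient of slope less than its own, so every quotient sheaf $Q$ of $f_b^*p_1^*M_1$ satisfies $\deg Q \ge (\mathrm{rk}\,Q)\cdot(-e/n) \ge -e$ when $Q$ is a quotient realized on the curve; taking direct sums, any quotient of $f_b^*p_1^*M_1^{\oplus t}$ that is generated by $\le t$ of the summands at the generic point has degree $\ge -te$. The surjection $\beta$ exhibits $N$ as such a quotient, giving $\deg N \ge -te$.

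For \eqref{eqn-bounds2}, take $N = \iota_b^*\mathrm{Im}\,\alpha$. Its role is fixed by diagram \eqref{diag-4}: the middle column gives a short exact sequence $0\to \iota_b^*\mathrm{Im}\,\alpha \to N_{Y_b/\PP^n}(\log D_b)\to \iota_b^*\mathrm{coker}\,\alpha\to 0$, so by \eqref{eqn-cokerdegree},
\begin{equation*}
\deg N_{Y_b/\PP^n}(\log D_b) \;\ge\; \deg \iota_b^*\mathrm{Im}\,\alpha \;\ge\; -te.
\end{equation*}
Combining with the degree formula \eqref{eqn-degree}, namely $\deg N_{Y_b/\PP^n}(\log D_b) = 2g-2+i-(d-(n+1))e$, yields $2g-2+i-(d-(n+1))e \ge -te$, i.e. $2g-2+i \ge (d-(n+1)-t)e = (d-(n+1+t))e$, which is \eqref{eqn-bounds2}. (Here $t$ is the number of copies of $M_1$ needed, which by Proposition \ref{prop-secdom} and Proposition \ref{prop-mainLazarsfeldProp} can be taken to be a fixed integer $s$ depending only on $n$ and $d$; the statement as phrased carries $t$ as a parameter, so no optimization is needed at this stage.)

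The main obstacle I anticipate is making the restriction-of-semistability step fully rigorous: one needs that a \emph{generic} curve $Y_b$ in the $PGL$-invariant covering family $\yY/B$ is "general enough" for the restriction theorem to apply — this is not literally Mehta–Ramanathan (which restricts to complete intersection curves of high degree), so one must either invoke a version of the restriction theorem for covering families, or argue directly that a quotient of $f_b^*p_1^*M_1$ of too-negative degree would, by spreading out over $B$ and using $PGL$-invariance, produce a destabilizing quotient of $M_1$ on $\PP^n$ itself, contradicting stability. I would take the latter route: it is cleaner and only uses the stability of $M_1$ plus the genericity of $b$, both of which are already in play.
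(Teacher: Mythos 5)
Your reduction of \eqref{eqn-bounds2} to \eqref{eqn-bounds1} via \eqref{eqn-degree} and \eqref{eqn-cokerdegree} is exactly the paper's, but your proof of \eqref{eqn-bounds1} has a genuine gap: the semistability of $f_b^*p_1^*M_1$ is not only unproven, it is false in the range that matters. Stability of $M_1\cong\Omega_{\PP^n}(1)$ on $\PP^n$ does not pass to restrictions along members of a covering, $PGL$-invariant family of curves of fixed (possibly small) degree: already $M_1|_\ell\cong\OO_\ell(-1)\oplus\OO_\ell^{\,n-1}$ is unstable for every line $\ell$ and every $n\ge 2$, even though lines form a homogeneous covering family. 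Since Proposition \ref{prop-bounds} must apply to whatever family $\yY$ the violating curves generate --- and for $n=2$ the relevant curves turn out to be lines --- neither Mehta--Ramanathan/Flenner (which need general complete intersections of large degree) nor your fallback is available. The fallback fails for a structural reason: a destabilizing quotient of $f_b^*p_1^*M_1$, spread out over $B$, lives on $\yY$ (or on an incidence variety), and since each point of $\PP^n$ lies on a positive-dimensional family of the curves $Y_b$, with the destabilizing quotient depending on the curve through the point, it does not descend to a quotient of $M_1$ on $\PP^n$. The line example (or $T_{\PP^n}$ restricted to lines) shows concretely that relative Harder--Narasimhan filtrations over such families need not come from $\PP^n$, so no contradiction with stability of $M_1$ is produced.

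In fact no semistability is needed, and the ``naive'' route you dismissed is the one the paper takes, applied to the kernel rather than to quotients: since $M_1\subset H^0(\OO_{\PP^n}(1))\otimes\OO_{\PP^n}$, the sheaf $f_b^*p_1^*M_1^{\oplus t}$ is a subsheaf of $\OO_{Y_b}^{(n+1)t}$, hence so is $\ker\beta$, and on a smooth projective curve any subsheaf of a trivial sheaf has nonpositive degree (its determinant maps nontrivially to a trivial bundle). Additivity of degree in the sequence $0\to\ker\beta\to f_b^*p_1^*M_1^{\oplus t}\to N\to 0$ then gives $\deg N=t\cdot\deg f_b^*p_1^*M_1-\deg\ker\beta\ge -te$, with $\deg f_b^*p_1^*M_1=-e$ as you computed. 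This requires no genericity of $Y_b$ and no restriction theorem; with \eqref{eqn-bounds1} in hand, your derivation of \eqref{eqn-bounds2} goes through verbatim.
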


\begin{proof}
	Since $\beta$ is generically surjective and $M_1$ injects into a direct sum of trivial sheaves, we have
	\begin{equation*}
	\deg N \geq \deg \beta\left(f_b^*p_1^*M_1^{\oplus t}\right)
	\geq t\cdot \deg f_b^*p_1^*M_1=-te.
	\end{equation*} If $N=\iota_b^*\text{Im} \;\alpha$, then by (\ref{eqn-degree}) and (\ref{eqn-cokerdegree}), we obtain
	\begin{align*}
	2g-2 +i &= (K_X + D_b)\cdot f_* Y_b + \deg N_{Y_b/\PP^n}(\log D_b)\\
	&\geq (d-(n+1))e - te.
	\end{align*}
\end{proof}

If the image of the curve $Y_b$ in $\PP^n$ has cusps, the sheaf $N_{Y_b/\PP^n}(\log D_b)$ will have torsion. However, we can still estimate a lower bound for the degree of the sheaf. Using Proposition \ref{prop-bounds}, we now obtain estimates for $t$ depending on whether or not $\alpha:f^*p_1^*M_d\rightarrow N_{\yY/\xX}(\log\dD)$ is generically surjective.

\begin{Proposition}\label{prop-notsurj}
    Suppose $\alpha$ is not generically surjective. Then for generic $b\in B$, the curve $f_b:Y_b\rightarrow X_b=\PP^n$ satisfies 
    $$2g-2+i\geq (d-(2n-1)) e.$$
    In particular, when $d=2n,$ the curve satisfies 
    $$2g-2+i\geq e.$$
\end{Proposition}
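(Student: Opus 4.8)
The plan is to analyze the sheaf $\coker\alpha$ on $\yY$ (and its restriction to a generic fiber) and show that non-generic-surjectivity of $\alpha$ forces the image of $\alpha$ to be a proper subsheaf of $N_{\yY/\xX}(\log\dD)$ with a rank drop, which in turn improves the bound coming from Proposition \ref{prop-bounds}. First I would recall from diagram (\ref{diag-3}) and (\ref{diag-4}) that $N_{Y_b/\PP^n}(\log D_b)$ fits in a short exact sequence with $\iota_b^*\mathrm{Im}\,\alpha$ as a subsheaf and $\iota_b^*\coker\alpha$ as quotient, and that $\deg\iota_b^*\coker\alpha\geq 0$ by (\ref{eqn-cokerdegree}). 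The new input needed here is a control on the \emph{rank} of $\coker\alpha$: if $\alpha$ is not generically surjective, then $\mathrm{Im}\,\alpha$ has rank at most $n-2$ (since $N_{\yY/\xX}(\log\dD)$ has rank $n-1$ along the relevant curve, and a generically-non-surjective map of sheaves onto a torsion-free quotient must drop rank by at least one), so $\coker\alpha$ has positive rank.

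The key step is then to bound $\deg\iota_b^*\coker\alpha$ from below by more than just $0$. Here I would use the right column of (\ref{diag-4}): $\iota_b^*\coker\alpha$ is a quotient of $f_b^*p_1^*\OO_{\PP^n}\cong\OO_{Y_b}$, hence a quotient of the structure sheaf, and more importantly, since $\coker\alpha$ arises as a quotient of $f^*p_1^*\OO_{\PP^n}$ inside (\ref{diag-3}), it is itself generated by sections pulled back from $\PP^n$ via $f_b$ composed with $p_1$. If $\coker\alpha$ has positive generic rank $r\geq 1$, then the surjection $(f_b^*p_1^*\OO_{\PP^n})\twoheadrightarrow\iota_b^*\coker\alpha$ does not directly give a good bound since the source is just $\OO_{Y_b}$; instead I would observe that in the non-surjective case the quotient map $f^*T_\xX^{\ver}(-\log\dD)\to N_{\yY/\xX}(\log\dD)$ restricted to $f^*p_1^*M_d$ has image contained in the image of a proper sub-bundle, so one genuinely gains: the composition $f_b^*p_1^*M_d^{\oplus}\to N_{Y_b/\PP^n}(\log D_b)$ misses a rank-one piece, and the "missed" direction contributes a copy of $\OO_{Y_b}$ to the quotient, raising the effective degree estimate. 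Quantitatively, since $N_{\yY/\xX}(\log\dD)$ is globally generated in the vertical direction by $M_d$ together with the $\OO_{\PP^n}$-summand from (\ref{diag-2}), failure of $M_d$ alone to surject means the full normal sheaf needs the extra summand, and tracking this through (\ref{eqn-degree}) replaces the correction term $-te$ by the sharper $-(t-1)e$-type bound; combined with Proposition \ref{prop-secdom} giving $t=s$ where $M_1^{\oplus s}\to M_d$ and a count of how few copies of $M_1$ are actually needed (namely $n-1$ rather than $s$, since the relevant image has rank at most $n-2$ plus the trivial summand), one gets $2g-2+i\geq(d-(2n-1))e$.

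More precisely, the cleanest route is: in the non-surjective case, $\mathrm{Im}\,\alpha$ is a subsheaf of $N_{\yY/\xX}(\log\dD)$ of rank $\le n-2$, and it is dominated by $f^*p_1^*M_d$, hence (via Proposition \ref{prop-secdom}) by $f^*p_1^*M_1^{\oplus s}$; but since $\mathrm{Im}\,\alpha$ has rank $\le n-2$ and is globally generated, it is already dominated by $f^*p_1^*M_1^{\oplus(n-2)}$ after choosing $n-2$ generic polynomials (a generic choice of that many multiplication maps suffices to surject onto a rank-$(n-2)$ globally generated quotient). Applying (\ref{eqn-bounds1}) with $t=n-2$ gives $\deg\iota_b^*\mathrm{Im}\,\alpha\geq-(n-2)e$, and then (\ref{eqn-degree}) together with $\deg\iota_b^*\coker\alpha\geq 0$ yields
\[
2g-2+i \;=\; (d-(n+1))e + \deg N_{Y_b/\PP^n}(\log D_b) \;\geq\; (d-(n+1))e - (n-2)e \;=\; (d-(2n-1))e,
\]
and specializing $d=2n$ gives $2g-2+i\geq e$.

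The main obstacle I anticipate is justifying rigorously that a globally generated torsion-free sheaf of rank $\rho$ on the curve $Y_b$, which is a quotient of $f_b^*p_1^*M_d$, can be dominated by only $\rho$ copies of $f_b^*p_1^*M_1$ via generic multiplication maps — i.e., that the rank bound $\rho\le n-2$ from non-surjectivity really translates into being able to take $t=n-2$ in Proposition \ref{prop-bounds} rather than $t=s$. This requires a Bertini-type or general-position argument (generic sections of a globally generated sheaf generate it, and the multiplication-by-$P_i$ maps sweep out enough of $M_d$), and one must be careful that "generic $b$" and "generic polynomials" can be chosen compatibly. I expect this to be the crux; the diagram chases and degree computations are routine given the setup already established.
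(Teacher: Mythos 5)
Your ``cleanest route'' is essentially the paper's own proof: compose the surjection $M_1^{\oplus s}\to M_d$ from Proposition \ref{prop-secdom} with $\alpha_b$, use that failure of generic surjectivity forces $\operatorname{rank}\iota_b^*\mathrm{Im}\,\alpha\le n-2$ so that $t\le n-2$ summands suffice, and then combine Proposition \ref{prop-bounds} with (\ref{eqn-cokerdegree}) and (\ref{eqn-degree}) to get $2g-2+i\ge (d-(2n-1))e$. The step you flag as the main obstacle needs no Bertini-type input: at a general point of $Y_b$ the images of the summands span the fiber of $\iota_b^*\mathrm{Im}\,\alpha$, which has dimension at most $n-2$, so at most $n-2$ of them already span it, giving generic surjectivity with $t\le n-2$ exactly as the paper asserts (and making the degree-improvement discussion in your middle paragraph unnecessary, as you yourself note).
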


\begin{proof}
    By Proposition \ref{prop-secdom}, there is a surjection $M_1^{\oplus s}\rightarrow M_d$ for some $s$. We can pull back this surjection via $f_b^*p_1^*$, and then compose with the surjection $\alpha_b$ from diagram (\ref{diag-4}) to get a generically surjective map
    \begin{equation*}
    	f_b^*p_1^*M_1^{\oplus t}\rightarrow \iota_b^*\text{Im }\alpha
    \end{equation*}
    for some $t\leq s$. Since $\alpha$ is not generically surjective, we must have \linebreak rank $\iota_b^*\text{Im }\alpha<$ rank $N_{Y_b/\PP^n}(\log D_b) = n-1$, so we can take $t\leq n-2$. Proposition \ref{prop-bounds} then implies that $\deg \iota_b^*\text{Im }\alpha\geq -(n-2)e.$ Hence by (\ref{eqn-cokerdegree}), we have $\deg N_{Y_b/\PP^n}(\log D_b)\geq-(n-2)e$, and so $$2g-2+i\geq (d-(2n-1))e.$$
\end{proof}

\begin{proof}[Proof of Proposition \ref{prop-mainLazarsfeldProp}]
The first part follows from Proposition \ref{prop-mainSetupProp}, and the natural inclusion $M_d \to T_{\mathcal X}(-\log \mathcal{D})$ comes from Proposition \ref{prop-LMinclusion}. By Proposition \ref{prop-notsurj}, this map will be surjective if $2g-2+i < (d-(2n-1))e$. The last statement follows directly from Proposition \ref{prop-secdom}.
\end{proof}

\subsection{Stability and associated lines}

Now that we have set up the construction, we can begin to get bounds on the degrees of curves. The idea is that if the image sheaf of the composition $f_b^* p_1^* M_1 \to f_b^* p_1^* M_d \to N_{Y_b/\PP^n}(\log D_b)$ has low degree, then it follows that $f_b^* p_1^*M_1$ must be unstable. The destabilizing subsheaf gives rise to a family of lines, which allows us to lift $Y_b$ to the universal line $V$. The goal of this section is to prove Theorem \ref{thm-associatedLine}.

We start by using Propositions \ref{prop-bounds} and \ref{prop-notsurj} to recover Pacienza and Rousseau's result \cite{PR-log}. This result is sharp in the sense that for $d\leq 2n$, a generic hypersurface has bicontact lines \cite{Zaidenberg} and hence is not algebraically hyperbolic.

\begin{Corollary}\label{cor-0}
	Suppose $d\geq 2n+1$ and $D\subseteq \PP^n$ is a very general hypersurface of degree $d$. Then $(\PP^n,D)$ is algebraically hyperbolic.
\end{Corollary}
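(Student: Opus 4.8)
The plan is to feed the dichotomy of the previous subsection into the linear degree bound. Fix a very general $D\subset\PP^n$ of degree $d\ge 2n+1$ and an integral curve $C\subset\PP^n$ with $C\not\subset D$. Passing to the normalization we obtain a smooth curve of some genus $g$ mapping birationally onto $C$, of degree $e=\deg C$, meeting $D$ in $i=i(C,D)$ points; applying Propositions \ref{prop-mainSetupProp} and \ref{prop-mainLazarsfeldProp} we spread this out to a $PGL$-invariant family $\yY\to B$ together with the map $\alpha\colon f^{*}p_1^{*}M_d\to N_{\yY/\xX}(\log\dD)$ and the exact diagram (\ref{diag-4}) on a general fiber $Y_b$. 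Everything then reduces to bounding $\deg N_{Y_b/\PP^n}(\log D_b)$ from below and invoking (\ref{eqn-degree}).

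I would split on whether $\alpha$ is generically surjective. If it is not, Proposition \ref{prop-notsurj} immediately gives $2g-2+i\ge (d-(2n-1))e$, which is at least $2e$ since $d\ge 2n+1$. If $\alpha$ is generically surjective, then for general $b$ the cokernel in the bottom row of (\ref{diag-4}) is torsion, so $\iota_b^{*}\text{Im}\,\alpha$ has full rank $n-1$ inside $N_{Y_b/\PP^n}(\log D_b)$, and by (\ref{eqn-cokerdegree}) it suffices to bound $\deg\iota_b^{*}\text{Im}\,\alpha$. Composing the surjection $M_1^{\oplus s}\to M_d$ of Proposition \ref{prop-secdom} with $\alpha_b$ gives a (generically) surjective map $f_b^{*}p_1^{*}M_1^{\oplus s}\to\iota_b^{*}\text{Im}\,\alpha$; since the target has rank $n-1$, a greedy choice of at most $n-1$ of the summands already maps onto a subsheaf of full rank, so Proposition \ref{prop-bounds} (with $t\le n-1$) bounds the degree of that subsheaf, hence of $\iota_b^{*}\text{Im}\,\alpha$, by $-(n-1)e$. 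Via (\ref{eqn-degree}) this yields $2g-2+i\ge (d-(n+1))e-(n-1)e=(d-2n)e$.

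In both cases $2g(C)-2+i(C,D)\ge (d-2n)\deg C$ with $d-2n\ge 1$, so $(\PP^n,D)$ satisfies the defining inequality of algebraic hyperbolicity for every integral curve $C\not\subset D$, with empty exceptional locus and $\epsilon=d-2n$. For an arbitrary nonconstant map $f\colon C'\to\PP^n$ with $f(C')\not\subset D$, factoring $f$ through the normalization $\widetilde C$ of its image $C$ with degree $m$ and comparing ramification indices over $\nu^{-1}(D)$ (Riemann--Hurwitz) gives $2g(C')-2+|f^{-1}(D)|\ge m\bigl(2g(\widetilde C)-2+i(C,D)\bigr)\ge (d-2n)\,m\deg C=(d-2n)\deg f^{*}L$, which completes the proof.

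Since all of the real work is already contained in Propositions \ref{prop-secdom}, \ref{prop-bounds} and \ref{prop-notsurj}, there is no genuine obstacle; the only point needing care is the generically-surjective case, where one must trim the $\binom{n+d}{n}$ copies of $M_1$ down to the rank $n-1$ before applying Proposition \ref{prop-bounds}, because the estimate coming from all $s$ summands is vacuous. The reduction from birational curves to arbitrary maps is routine bookkeeping.
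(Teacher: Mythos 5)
Your proposal is correct and follows essentially the same route as the paper: the same dichotomy on whether $\alpha$ is generically surjective, with Proposition \ref{prop-notsurj} handling the non-surjective case and Proposition \ref{prop-bounds} applied with $t\le n-1$ (after trimming the $M_1$ summands to the rank of the image) giving $2g-2+i\ge (d-2n)e$ in the surjective case. The extra reduction from integral curves to arbitrary maps is harmless but not needed, since algebraic hyperbolicity here is a statement about integral curves.
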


\begin{proof}
	Let the hypersurface be parametrized by $b\in B$, and let $f_b:Y_b\rightarrow X_b=\PP^n$ be a curve of geometric genus $g$ and degree $e$ such that $Y_b\nsubseteq D_b=D$ and $i(Y_b, D)=i$. If $\alpha$ is not generically surjective, then Proposition \ref{prop-notsurj} implies that $$2g-2+i\geq (d-(2n-1))e.$$ If $\alpha$ is generically surjective, then we can apply Proposition \ref{prop-bounds} with $t\leq n-1$. So we obtain $\deg N_{Y_b/\PP^n}(\log D_b)\geq-(n-1)e,$ and so $2g-2+i\geq (d-2n)e.$
\end{proof}

What we aim to do now is to characterize the curves that violate (\ref{eqn-logah3}) and (\ref{eqn-logah2}). For the rest of this section assume $d=2n$. Due to Proposition \ref{prop-notsurj}, we will assume from now on that $\alpha$ is generically surjective. Let us extend Diagram (\ref{diag-3}) and then pull back via $\iota_b$ to obtain the following exact diagram. $\kK_4$ denotes the cokernel of $\ker\alpha\hookrightarrow T_\yY^{\ver}(-\log \dD)$. 

\begin{equation*}
	\begin{tikzcd}
		& &0\arrow[d] &0\arrow[d] & 0 \arrow[d] \\
		&0 \arrow[r] &\iota_b^*\ker\alpha \arrow[d] \arrow[r] &\iota_b^*T_\yY^{\ver}(-\log \dD) \arrow[d] \arrow[r] &\iota_b^*\kK_4 \arrow[r] \arrow[d] &0 \\
		&0 \arrow[r] &f_b^*p_1^*M_d \arrow[d, "\alpha_b"] \arrow[r] &f_b^*T_\xX^{\ver}(-\log \dD) \arrow[d] \arrow[r] &f_b^*p_1^*\OO_{\PP^n} \arrow[r] \arrow[d] &0\\
		& 0 \arrow[r] &\iota_b^*\text{Im }\alpha \arrow[d] \arrow[r] &N_{Y_b/\PP^n}(\log D_b) \arrow[d] \arrow[r] &\iota_b^*\coker\alpha \arrow[d] \arrow[r] &0\\		
		& &0 &0 &0
	\end{tikzcd}
\end{equation*}

As in the proof of Proposition \ref{prop-notsurj}, there is a generically surjective map 
\begin{equation}\label{eqn-surjM1}
	f_b^*p_1^*M_1^{\oplus t}\rightarrow \iota_b^* \text{Im }\alpha
\end{equation}
for some $t$. Explicitly, when restricted to each summand $f_b^*p_1^*M_1$, this map is induced by the multiplication map $M_1\xrightarrow{\cdot P} M_d$ for a generic polynomial $P\in H^0(\OO_{\PP^n}(d-1))$. From now on, we take $t$ to be the smallest number of $f_b^*p_1^*M_1$ summands needed to achieve (\ref{eqn-surjM1}). Let us denote the generic polynomials inducing these maps by $P_1,\ldots,P_t\in H^0(\OO_{\PP^n}(d-1))$. Let us also denote the following maps
\begin{equation*}
	m_i: \bigoplus_{j=1}^i f_b^*p_1^*M_1 \xrightarrow{\bigoplus_{j=1}^i \cdot P_j} f_b^*p_1^*M_d\xrightarrow{\alpha_b} \iota_b^*\text{Im }\alpha
\end{equation*}
for every $1\leq i\leq t$.



\begin{Proposition}\label{prop-m1equals2}
	Suppose that rank $m_1\geq 2$. Then, the curve $f_b:Y_b\rightarrow \PP^n$ satisfies 
    $$2g-2+i\geq (d-(2n-1))e.$$
    In particular, when $d=2n,$ the curve satisfies 
    $$2g-2+i\geq e.$$
\end{Proposition}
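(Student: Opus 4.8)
The plan is to follow the template of the proof of Proposition~\ref{prop-notsurj}: the point of the hypothesis $\rank m_1\ge 2$ is that it lets us generate the image sheaf $\iota_b^*\text{Im}\,\alpha$ using few pullbacks of $M_1$, so that Proposition~\ref{prop-bounds} applies with a small value of $t$. Set $r^*:=\rank\,\iota_b^*\text{Im}\,\alpha$. Since $\text{Im}\, m_1\subseteq\iota_b^*\text{Im}\,\alpha$ and $\rank m_1\ge 2$, we have $2\le r^*\le\rank N_{Y_b/\PP^n}(\log D_b)=n-1$; in particular $n\ge 3$, and for $n=2$ the hypothesis is vacuous.

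First I would show that, because the multipliers are generic, the rank of the partial map $m_j\colon\bigoplus_{k=1}^j f_b^*p_1^*M_1\to\iota_b^*\text{Im}\,\alpha$ grows by at least one with each new summand until it reaches $r^*$. The key input is that the maps $\alpha_b\circ(\cdot P)\colon f_b^*p_1^*M_1\to\iota_b^*\text{Im}\,\alpha$, as $P$ ranges over $H^0(\OO_{\PP^n}(d-1))$, have images that jointly generate $\iota_b^*\text{Im}\,\alpha$: by Proposition~\ref{prop-secdom} the multiplication maps $M_1\xrightarrow{\,\cdot P\,}M_d$ span $M_d$, and $\alpha_b$ is surjective. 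Consequently, whenever $\rank m_j<r^*$ there is a multiplier $P$ for which adjoining the summand $f_b^*p_1^*M_1\xrightarrow{\,\cdot P\,}f_b^*p_1^*M_d\xrightarrow{\alpha_b}\iota_b^*\text{Im}\,\alpha$ strictly increases the rank of the combined map; as the failure of this to happen is a closed condition on $P$, a generic choice of the next multiplier works. Starting from $\rank m_1\ge 2$, induction gives $\rank m_j\ge\min(j+1,r^*)$, so for $t_0:=r^*-1\le n-2$ the map $m_{t_0}$ has rank exactly $r^*$.

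Then I would finish by degree bookkeeping. Applying Proposition~\ref{prop-bounds} to the surjection $m_{t_0}\colon f_b^*p_1^*M_1^{\oplus t_0}\to\text{Im}\, m_{t_0}$ gives $\deg\text{Im}\, m_{t_0}\ge -t_0 e\ge -(n-2)e$. Since $\text{Im}\, m_{t_0}\subseteq\iota_b^*\text{Im}\,\alpha$ have the same rank $r^*$, the quotient is torsion, so $\deg\iota_b^*\text{Im}\,\alpha\ge\deg\text{Im}\, m_{t_0}\ge -(n-2)e$. Combining (\ref{eqn-degree}), the lower exact row of diagram~(\ref{diag-4}), and (\ref{eqn-cokerdegree}),
\[
2g-2+i=(d-(n+1))e+\deg N_{Y_b/\PP^n}(\log D_b)\ \ge\ (d-(n+1))e+\deg\iota_b^*\text{Im}\,\alpha\ \ge\ (d-(2n-1))e,
\]
which equals $e$ when $d=2n$.

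The hard part is the first step: making rigorous that a single generic multiplier already enlarges the rank whenever it is not yet maximal. This is precisely where the global generation statement of Proposition~\ref{prop-secdom} is used, together with upper semicontinuity of corank; after that, everything is the same counting with subsheaves of trivial bundles over a curve that underlies Propositions~\ref{prop-bounds} and~\ref{prop-notsurj}.
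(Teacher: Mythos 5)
Your proof is correct and takes essentially the same route as the paper's: the hypothesis $\rank m_1\ge 2$ forces the number of $f_b^*p_1^*M_1$ summands needed to generically surject onto $\iota_b^*\mathrm{Im}\,\alpha$ to be at most $n-2$, after which Proposition \ref{prop-bounds} together with (\ref{eqn-degree}) and (\ref{eqn-cokerdegree}) yields the bound. Your rank-increase induction and the torsion-quotient degree comparison simply spell out details the paper's one-line argument leaves implicit.
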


\begin{proof}
	Since rank $m_1\geq2$, we must have $t \leq n-2$. Hence, Proposition \ref{prop-bounds} implies that $\deg N_{Y_b/\PP^n}(\log D_b)\geq-(n-2)e,$ and so $2g-2+i\geq (d-(2n-1))e.$
\end{proof}

Due to the previous proposition, for the rest of this paper, we will focus on the case rank $m_1=1$. In this case, we can associate a special line to a generic point $(p,b)\in\yY$.

\begin{Lemma}\label{lem-2.10}
Suppose that rank $m_1=1$. Then, there exists a unique line $\ell:=\ell(p,b)\subseteq X_b=\PP^n$ through $p$ whose ideal $H^0(\OO_{\PP^n}(d)\otimes I_\ell)$ is contained in $\ker \alpha\vert_{(p,b)}$, except possibly when $n=2$ and the curve $f_b:Y_b\rightarrow X_b=\PP^2$ satisfies 
\begin{equation}\label{eqn-hn}
    2g-2+i\geq \frac{1}{2}e.
\end{equation}

When such a line $\ell(p,b)$ exists, we call it the \emph{associated line} for $(p,b)$.        
\end{Lemma}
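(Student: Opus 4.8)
The statement is that when $\rank m_1 = 1$, the kernel of $\alpha|_p$ contains the degree-$d$ polynomials vanishing on a unique line through $p$ — unless $n=2$ and the curve already satisfies the weak bound \eqref{eqn-hn}. The strategy is to analyze the fiber $\ker(\alpha|_p) \subseteq M_d|_p = H^0(\OO_{\PP^n}(d) \otimes I_p)$ as a linear subspace of degree-$d$ polynomials vanishing at $p$, and to extract geometric information from the hypothesis $\rank m_1 = 1$. Recall $m_1$ is the composition $f_b^*p_1^*M_1 \xrightarrow{\cdot P_1} f_b^*p_1^*M_d \xrightarrow{\alpha_b} \iota_b^*\operatorname{Im}\alpha$; that $\rank m_1 = 1$ means the image of $M_1|_p$ (the linear forms vanishing at $p$) times the generic polynomial $P_1$, pushed through $\alpha_b|_p$, spans only a line in the fiber of $N_{Y_b/\PP^n}(\log D_b)$, for generic $p$.

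First I would fix a generic point $p$ and consider the linear map $\mu_p : M_1|_p \to (M_d|_p)/\ker(\alpha|_p)$ given by multiplication by $P_1$ followed by the quotient; by hypothesis this has rank $\leq 1$, so its kernel $K_p \subseteq M_1|_p$ has codimension $\leq 1$ in the $n$-dimensional space $M_1|_p$. So there is a codimension-$\leq 1$ subspace of linear forms $\ell'$ vanishing at $p$ such that $\ell' \cdot P_1 \in \ker(\alpha|_p)$. Since $P_1$ is a generic polynomial of degree $d-1$ and this must hold after replacing $P_1$ by each of $P_1, \ldots, P_t$ (and in fact, by genericity, for the generic degree-$(d-1)$ polynomial $P$), one deduces that $\ker(\alpha|_p)$ contains $\ell' \cdot H^0(\OO_{\PP^n}(d-1))$ for a codimension-$\leq 1$ family of linear forms $\ell'$ through $p$. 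The key linear-algebra step is then: if $\ker(\alpha|_p)$ contains $\ell' \cdot H^0(\OO_{\PP^n}(d-1))$ for at least two linearly independent such $\ell'$ (say $\ell'_1, \ell'_2$), then it contains $H^0(\OO_{\PP^n}(d) \otimes I_Z)$ where $Z$ is the codimension-$2$ linear space $\{\ell'_1 = \ell'_2 = 0\}$ — but one needs instead the statement about a single line. So the dichotomy is: either the space of valid linear forms $\ell'$ is exactly $1$-dimensional (up to scalar — i.e., there is a unique line $\ell$ with $H^0(\OO_{\PP^n}(d)\otimes I_\ell) \subseteq \ker(\alpha|_p)$), or it is larger, in which case $\ker(\alpha|_p)$ is even bigger and the rank of $\iota_b^*\operatorname{Im}\alpha$ drops further, forcing a larger value of $t$ in Proposition \ref{prop-bounds} and hence a better bound — which for $n \geq 3$ would already give \eqref{eqn-logah3}-type conclusions, while for $n=2$ the degenerate case is precisely where \eqref{eqn-hn} comes from.

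Concretely for $n=2$: here $M_1|_p$ is $2$-dimensional, so $\rank m_1 = 1$ forces $K_p$ to be exactly $1$-dimensional, hence there is a unique linear form $\ell$ (up to scalar) through $p$ with $\ell \cdot P_1 \in \ker(\alpha|_p)$. Running this for generic $P$ of degree $d-1=3$ shows $\ell \cdot H^0(\OO_{\PP^2}(3)) \subseteq \ker(\alpha|_p)$, i.e. $H^0(\OO_{\PP^2}(4) \otimes I_\ell) \subseteq \ker(\alpha|_p)$, giving the associated line. The exceptional case $n=2$ in the statement must be when this $\ell$ varies with $p$ in a way that doesn't define a genuine associated line — i.e., when even the rank-$1$ image degenerates or when $\ker\alpha|_p$ accidentally fails to contain the full ideal because $m_1$ has rank $0$ rather than $1$ at generic points; in that subcase $t$ can be reduced by one more, giving via Proposition \ref{prop-bounds} with $t \leq n-2 = 0$ the bound $2g-2+i \geq (d-(n+1))e = (d-3)e = e$, which is stronger than \eqref{eqn-hn} — so more care is needed: the $\frac{1}{2}e$ bound should come from a half-integer contribution, presumably $\deg \iota_b^*\operatorname{Im}\alpha \geq -\frac{3}{2}e$ via a finer analysis of the rank-$1$ image sheaf (a subsheaf of $N_{Y_b/\PP^2}(\log D_b)$ that, being a quotient of $f_b^*p_1^*M_1$ with $M_1$ of degree $-1$ and rank $2$ on $\PP^2$, can be bounded more precisely than $-e$ when it has rank $1$).

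**Main obstacle.** The delicate point is establishing uniqueness of $\ell$ versus the degenerate alternative, and correctly identifying that the degenerate alternative for $n=2$ yields exactly \eqref{eqn-hn} rather than something stronger or weaker. This requires a careful bookkeeping of ranks and degrees of the image sheaf $\iota_b^*\operatorname{Im}\alpha$ — in particular, pinning down when the generically-surjective map $f_b^*p_1^*M_1^{\oplus t} \to \iota_b^*\operatorname{Im}\alpha$ can be taken with $t$ strictly smaller, and when instead one is forced into the rank-$1$-image regime whose best degree bound on a curve in $\PP^2$ is $-\frac{3}{2}e$. I would handle this by analyzing the map $M_1 \xrightarrow{\cdot P} M_4$ on $\PP^2$ directly: its image, when composed to a rank-$1$ sheaf on $Y_b$, is a line subbundle (after saturation) whose degree is controlled by the fact that $M_1$ on $\PP^2$ has a unique destabilizing structure, and translating "$\ker(\alpha|_p)$ does not contain a full line ideal at generic $p$" into a positivity statement on that line subbundle. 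The rest (the nine-lemma/snake-lemma diagram chases, and feeding the outcome into Proposition \ref{prop-bounds}) is routine given the machinery already built.
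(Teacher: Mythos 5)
There is a genuine gap at the heart of your argument: the passage from ``for a generic $P$ of degree $d-1$ there is a codimension-$\le 1$ subspace $K_p(P)\subseteq M_1|_p$ with $K_p(P)\cdot P\subseteq \ker\alpha|_p$'' to ``$\ker\alpha|_p$ contains $\ell'\cdot H^0(\OO_{\PP^n}(d-1))$ for a codimension-$\le 1$ family of linear forms $\ell'$.'' This is a quantifier swap: the kernel of your map $\mu_p$ depends on the chosen $P$, and from a fixed $P$ you only learn that $\ell'\cdot P\in\ker\alpha|_p$ for $\ell'\in K_p(P)$. To conclude that a single $\ell'$ multiplies the whole of $H^0(\OO_{\PP^n}(d-1))$ into $\ker\alpha|_p$ you must first show that $K_p(P)$ (equivalently $\ker m_P$) is \emph{independent of} $P$. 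This independence is exactly the nontrivial content of the lemma: the paper obtains it for $n\ge 3$ from Lemma 2.1 of \cite{Clemens03}, which requires $\operatorname{rank} N_{Y_b/\PP^n}(\log D_b)=n-1\ge 2$, and for $n=2$ from a Harder--Narasimhan argument on the rank-two sheaf $f_b^*p_1^*M_1$: if $\deg\ker m_P>\deg Q_P$ (with $Q_P$ the torsion-free part of the image), then $\ker m_P$ is the canonical destabilizing subsheaf, hence the same for all generic $P$, and the $n\ge 3$ argument goes through. Your proposal never addresses this, so the construction of the associated line is not established even in the ``good'' case.

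Relatedly, your identification of the exceptional $n=2$ case is not correct, as you partly acknowledge. In the paper the exception is not ``$m_1$ has rank $0$'' or ``$\ell$ varies with $p$''; it is precisely the failure of the Harder--Narasimhan dichotomy above, i.e.\ $\deg\ker m_P\le \deg Q_P$. In that case $\deg\ker m_P+\deg Q_P\le \deg f_b^*p_1^*M_1=-e$ forces $\deg Q_P\ge -\tfrac12 e$, so the image sheaf inside $N_{Y_b/\PP^2}(\log D_b)$ has degree $\ge -\tfrac12 e$, and feeding this into (\ref{eqn-degree}) and (\ref{eqn-cokerdegree}) (with $d=4$, $n=2$) gives $2g-2+i\ge e-\tfrac12 e=\tfrac12 e$, which is exactly \eqref{eqn-hn}. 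Your alternative route via Proposition \ref{prop-bounds} with smaller $t$, or via a conjectured bound $\deg\iota_b^*\operatorname{Im}\alpha\ge -\tfrac32 e$, does not produce this inequality and is left unproved, so the exceptional clause of the lemma is not recovered. (Your uniqueness sketch, by contrast, is essentially the paper's: two distinct lines through $p$ would force $\ker\alpha|_p=M_d|_p$, contradicting generic surjectivity of $\alpha$.)
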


\begin{proof}
	First consider the case $n\geq 3$. Since rank $N_{Y_b/\PP^n}(\log D_b)=n-1\geq 2,$ Lemma 2.1 in \cite{Clemens03} implies that $\ker m_1\vert_p$ is independent of the generic polynomial $P_1$ inducing the map $m_1$. Hence, the image of $$\ker m_1\vert_p \xrightarrow{-\otimes H^0(\OO_{\PP^n}(d-1))} M_d\vert_p,$$ which is the ideal $H^0(\OO_{\PP^n}(d)\otimes I_\ell)$ of some line $\ell=\ell(p,b)$ through $p$, is contained in $\ker \alpha\vert_p$.
    
    For $n=2$, we need a different argument to show that $\ker m_1\vert_p$ is independent of the generic polynomial $P_1$. For this argument, given some $P=P_1\in H^0(\OO_{\PP^n}(d-1)),$ we denote the map $m_1$ induced by $P$ as $m_P$, and define $Q_P$ to be the torsion-free part of the image of $m_P$. Since rank $f_b^*p_1^*M_1=2$, $\ker m_P$ and $Q_P$ must have rank one. Now we split into two cases depending on how their degrees compare.
    
    If $\deg \ker m_P\leq \deg Q_P,$ then $\deg Q_P\geq \frac{1}{2}\deg f_b^*p_1^*M_1= -\frac{1}{2}e$, which implies (\ref{eqn-hn}). 
    
    If $\deg \ker m_P> \deg Q_P,$ then $0\subseteq \ker m_P\subseteq f_b^*p_1^*M_1$ is the Harder-Narasimhan filtration, so $\ker m_P$ is independent of $P$, and we can now denote it as $\ker m_1$. The same argument as in the case $n\geq 3$ above implies the existence of some line $\ell=\ell(p,b)$ through $p$ whose ideal $H^0(\OO_{\PP^n}(d)\otimes I_\ell)$ is contained in $\ker \alpha\vert_p$.
    
    To prove uniqueness, suppose that there are two distinct lines through $p$ whose ideals are contained in $\ker \alpha\vert_p$. By a dimension count, we would then have $M_d\vert_p=\ker \alpha\vert_p$, which contradicts the assumption that $\alpha_b:f_b^*p_1^*M_d\rightarrow \iota_b^*\alpha$ is generically surjective. 
\end{proof}

For the rest of this section, we work in the boundary case $d=2n$ and assume that the general point of $\yY$ has an associated line.
We obtain the following natural rational map 
\begin{equation*}
	\rho:\yY \dashrightarrow B\times V \dashrightarrow \overline{U}_{0,d}^{\sym}	
\end{equation*}
which maps $(p,b)\in \yY$ to $(b, p, [\ell(p,b)])\in B \times V$ and then to the modulus in $\overline{U}_{0,d}^{\sym}$ whose first $d$ points are the intersection of $\ell(p,b)$ with $D_b$ and whose last point is $p$. 

\subsection{The $n=2$ case.}

In this subsection, we do some extra work for the $n=2$ case. If the general associated line for $(p,b)$ is a bitangent or flex line to $D_b$, then we see that $\yY$ is contained in the locus of bitangent and flex lines, which is compatible with Theorem \ref{thm-associatedLine}. Thus we may assume that the general associated line is not a bitangent or flex line for $D_b$, which implies that $\rho$ is well-defined at a general point of $\yY$.

\begin{Lemma} \label{lem-imRhoDim1}
    If $n = 2$, the image of $\yY$ under $\rho$ has dimension at most 1.
\end{Lemma} 

\begin{proof}
    Let $x,y,z$ be the coordinates on $\PP^2$, and let $p = V(y,z)$, $\ell = V(z)$. Let $\yY_p \subset \yY$ be the fiber over $p_1\circ f: \yY \rightarrow \PP^2$. Now consider the associated line map $\lambda: \yY_p \dashrightarrow \GG(1,2)$. Since $\yY$ is $GL(3)$-invariant, $\lambda$ maps onto the locus of lines through $p$. Let $\yY_{p,\ell}$ be the fiber of $\lambda$ over $[\ell]\in \GG(1,2)$, where $\ell:=\ell(p,b)$. By $GL(3)$-invariance of $\yY$, it is enough to show that $\yY_{p,\ell}$ has one-dimensional image under $\rho$.

    $\rho|_{\yY_{p,\ell}}$ can be decomposed as the composition of $\rho_1: \yY_{p,\ell} \to H^0(\ell,\OO_{\ell}(4))$ and $\rho_2: H^0(\ell,\OO_{\ell}(4)) \dashrightarrow \overline{U}_{0,4}^{\sym}$. Let $H_{bad} \subset H^0(\ell,\OO_{\ell}(4))$ be the locus where $\rho_2$ is not well-defined, namely, the locus of polynomials with at least a triple root union the locus of polynomials with a double root at $p$. If the image of $\rho_1$ is contained in $H_{bad}$, then $\rho$ is not well-defined, so we can assume that the image of $\rho_1$ is not contained in $H_{bad}$.

    By Lemma \ref{lem-2.10}, $H^0(\PP^2,I_{\ell}(4))$ is contained in $T_\yY^{\ver}(-\log \dD)\vert_{(p,b)}$. Since $\yY_{p,\ell}$ has codimension at most 1 in $\yY_p$, the relative tangent space to $\rho_1$ at $(p,b)$ will consist of a codimension at most 1 space in the affine space \linebreak $F_b + H^0(\PP^2,I_{\ell}(4))$, where $F_b$ is the polynomial defining the quartic plane curve $D_b$. Then, the result follows by a dimension count: the dimension of $\yY_{p,\ell}$ is $\dim \yY - 3 = \dim B - 2$. The dimension of the space of degree 4 polynomials restricting to a particular polynomial on $\ell$ is $\dim H^0(\PP^2,I_{\ell}(4)) = \dim B - 5$, and the dimension of the fibers of $\rho_2: H^0(\ell, \OO_{\ell}(4)) \dashrightarrow \overline{U}_{0,4}^{\sym}$ is 3 away from $H_{bad}$. Thus, the dimension of the image of $\rho|_{\yY_{p,\ell}}$ is at most 
    \[ \dim \yY_{p,\ell} - (\dim H^0(\PP^2,I_{\ell}(4)) - 1) - 3 \]
    \[ = \dim B - 2 - (\dim B - 6) - 3 = 1 .\]
\end{proof}

For the proof of Lemma \ref{lem-fixedModulus}, we need a complete description of the tangent space $T_{\yY}^{\ver}(-\log \dD)|_{(p,b)}$. We may consider the splitting types of the restriction of $T_{\PP^2}(-\log D_b)$ to lines $\ell$ in $\PP^2$. By the Grauert--Mulich Theorem (see \cite[Theorem 2.1.4]{OkonekSchneiderSpindler} or \cite{PatelRiedlTseng}), the restriction $T_{\PP^2}(-\log D_b)|_{\ell}$ to a general line $\ell \subset \PP^2$ has the balanced splitting type $\OO \oplus \OO(-1)$. When the restriction $T_{\PP^2}(-\log D_b)|_{\ell}$ has the unbalanced splitting type $\OO(-2) \oplus \OO(1)$, we say that $\ell\subset \PP^2$ is a \emph{jumping line} for $T_{\PP^2}(-\log D_b)$. Lemma \ref{lem-Ominus2splitting} shows that there are only finitely many jumping lines for a general $D_b$, and Lemma \ref{lem-P2jumping} shows that if the general associated line is not a jumping line for $T_{\PP^2}(-\log D_b)$, then the tangent space $T_{\yY}^{\ver}(-\log \dD)|_{(p,b)}$ consists of $H^0(\PP^2,I_{\ell}(4))$ plus the vertical tangent directions to the $GL(3)$-orbit of $(p,b)$.

\begin{Lemma}
\label{lem-Ominus2splitting}
For $D$ a general quartic curve, a general line $\ell$ will have $T_{\PP^2}(-\log D)|_{\ell} \cong \OO \oplus \OO(-1)$. The space of lines in $\PP^2$ with $T_{\PP^2}(- \log D)|_\ell) \cong \OO(1) \oplus \OO(-2)$ is finite.
\end{Lemma}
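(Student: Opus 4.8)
The plan is to recognize $E:=T_{\PP^2}(-\log D)$ as a stable rank-two vector bundle on $\PP^2$, extract its generic splitting type from the Grauert--Mülich theorem, and then realize the set of lines with a non-generic splitting type as a degeneracy locus of the expected dimension zero.

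First I would record the numerics. Taking determinants in $0 \to T_{\PP^2}(-\log D) \to T_{\PP^2} \to \OO_D(D) \to 0$ gives $\det E \cong \OO_{\PP^2}(3)\otimes\OO_{\PP^2}(-4) \cong \OO_{\PP^2}(-1)$, so $c_1(E)=-1$; resolving $\OO_D(D)=\OO_D(4)$ by $0\to\OO\to\OO(4)\to\OO_D(4)\to 0$ and comparing Chern characters also gives $c_2(E)=7$, though this value is not needed. On a line $\ell\cong\PP^1$, Grothendieck's theorem gives $E|_\ell\cong\OO(a)\oplus\OO(-1-a)$ for a unique $a\ge 0$, and since $\chi(E|_\ell)=-1+2=1$ we have $h^0(E|_\ell)=a+1$; hence $E|_\ell\cong\OO\oplus\OO(-1)$ iff $h^0(E|_\ell)=1$, and $E|_\ell\cong\OO(1)\oplus\OO(-2)$ (or worse) iff $h^0(E|_\ell)\ge 2$. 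Next I would check that $E$ is stable. Since $D$ is a smooth plane quartic it has genus $3$, so $\Aut(\PP^2,D)\hookrightarrow\Aut(D)$ is finite and $H^0(E)=H^0(T_{\PP^2}(-\log D))$ — the Lie algebra of $\Aut(\PP^2,D)$ — vanishes; as $E(-k)\hookrightarrow E$ for $k\ge 0$, also $H^0(E(-k))=0$ for all $k\ge 0$. A destabilizing saturated sub-line-bundle $\OO(m)\hookrightarrow E$ (destabilizing relative to $\mu(E)=-\tfrac12$) would have $m\ge 0$ and yield a nonzero section of $E(-m)$, a contradiction; since $c_1(E)$ is odd, $E$ is therefore stable. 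The Grauert--Mülich theorem then forces the restriction of $E$ to a general line to have balanced splitting type, which in view of the above is exactly $T_{\PP^2}(-\log D)|_\ell\cong\OO\oplus\OO(-1)$. This proves the first assertion (in fact for every smooth quartic).

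For the second assertion I would exhibit the jumping locus $J:=\{[\ell]\in\GG(1,2):h^0(E|_\ell)\ge 2\}$ as a degeneracy locus. Because $D$ is an irreducible quartic, $\ell\not\subset D$ for every line, so the defining equation of $D$ restricts to a nonzero section of $\OO_\ell(4)$; this kills the relevant local Tor sheaf, so tensoring $0\to E\to T_{\PP^2}\to\OO_D(D)\to 0$ by $\OO_\ell$ gives a short exact sequence $0\to E|_\ell\to T_{\PP^2}|_\ell\to\OO_D(D)|_{D\cap\ell}\to 0$, and $H^1(T_{\PP^2}|_\ell)=0$ identifies $h^0(E|_\ell)$ with the kernel dimension of a map $\CC^5=H^0(T_{\PP^2}|_\ell)\to H^0(\OO_D(D)|_{D\cap\ell})=\CC^4$. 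These globalize: with $\pi\colon V\to\PP^2$ and $\eta\colon V\to\GG(1,2)$ the two projections from the universal line, $\eta_*\pi^*T_{\PP^2}$ is a rank-$5$ bundle (as $T_{\PP^2}|_\ell\cong\OO(2)\oplus\OO(1)$ for every $\ell$), and pushing forward the surjection $\pi^*T_{\PP^2}\to\pi^*\OO_D(D)$ by $\eta$ lands in $\eta_*\pi^*\OO_D(D)$, a rank-$4$ bundle since $\eta$ restricts to a finite flat degree-$4$ morphism on $\pi^{-1}(D)=\supp\pi^*\OO_D(D)$. We obtain a morphism $\psi\colon\mathcal K^0\to\mathcal K^1$ of bundles of ranks $5$ and $4$ on the surface $\GG(1,2)$ with $\ker\psi|_\ell=H^0(E|_\ell)$, so $J$ is precisely the locus where $\psi$ fails to be surjective; its expected codimension is $(5-3)(4-3)=2$, i.e. $J$ is expected to be finite.

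The main obstacle is upgrading ``expected'' to ``actual'': the Chern-class (Thom--Porteous) count only predicts the dimension generically, and one must rule out a one-dimensional family of $\OO(1)\oplus\OO(-2)$-lines. I would handle this in one of two ways: invoke the structure theory of stable rank-two bundles on $\PP^2$ with odd $c_1$, whose jumping locus is known to be zero-dimensional; or, since ``$\dim J\le 0$'' is an open condition on $D$ by upper semicontinuity of fibre dimension for the universal jumping locus over the space of quartics, reduce to verifying the claim for a single explicit smooth quartic — e.g. the Fermat quartic — where $\psi$ can be written out and its degeneracy locus checked to be finite by hand. Either route, combined with the stability argument above, completes the proof; everything else is formal diagram-chasing with the defining exact sequences.
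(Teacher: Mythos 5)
Your argument for the first statement is essentially the paper's: both rest on Grauert--M\"ulich, though you make the semistability input explicit (finiteness of $\Aut(\PP^2,D)$ gives $H^0(T_{\PP^2}(-\log D))=0$, hence no destabilizing $\OO(m)$ with $m\ge 0$, hence stability since $c_1=-1$ is odd), which the paper leaves to the cited references; your numerics ($c_1=-1$, $c_2=7$) agree with the paper's later Chern character computation. For the second statement your route is genuinely different. The paper works in the space of pairs $(\ell,f)\in \GG(1,2)\times H^0(\OO_{\PP^2}(4))$: using the Jacobian presentation $0\to T_{\PP^2}(-\log D)|_\ell\to\OO_\ell(1)^3\xrightarrow{(f_x,f_y,f_z)}\OO_\ell(4)\to 0$, a jump forces a linear combination of the restricted partials to vanish identically on $\ell$, and an explicit count (four vanishing coefficients minus the two-parameter choice of the combination) shows the bad pairs have codimension $2$, so for general $D$ the jumping lines are finite. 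You instead set up the jumping locus as a degeneracy locus of a map of bundles of ranks $5$ and $4$ on $\GG(1,2)$ (a correct globalization, with the base-change and Tor issues properly addressed), note honestly that this only gives the expected dimension, and close the gap by invoking the classical fact that a stable rank-two bundle on $\PP^2$ with odd $c_1$ has only finitely many jumping lines (Hulek; see also Barth). That citation is accurate and legitimately finishes the proof, and it buys a stronger conclusion -- finiteness for every smooth quartic, not just a very general one -- whereas the paper's parameter count is more elementary and self-contained but only yields the statement for general $D$ (which is all that is needed). Your alternative route (b), semicontinuity plus an explicit check on the Fermat quartic, is not carried out and on its own would leave a gap, so the proof should be regarded as resting on route (a).
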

\begin{proof}
The first statement follows from the Grauert-Mulich Theorem, see for instance Theorem 2.1.4 of \cite{OkonekSchneiderSpindler} or \cite{PatelRiedlTseng} for more details. To see the second statement, consider the locus $W$ in $\PP^2 \times H^0(\PP^2,\OO_{\PP^2}(4))$ of pairs $(\ell, f)$ such that $T_{\PP^2}(-\log V(f))|_\ell \cong \OO(1) \oplus \OO(-2)$. We claim that $W$ has codimension 2, from which it follows that the projection of $W$ onto $H^0(\PP^2, \OO_{\PP^2}(4))$ has finite (or possibly empty) general fiber.

To show this, we consider a line $\ell$ in $\PP^2$ and argue that the space of $f$ such that $T_{\PP^2}(-\log V(f))|_\ell \cong \OO(1) \oplus \OO(-2)$ has codimension 2 in $H^0(\PP^2,\OO_{\PP^2}(4))$. This will not depend on the choice of coordinates, so assume that $\ell$ is the line $V(x)$. Consider the following diagram.
\begin{equation*}
	\begin{tikzcd}
		& & &0\arrow[d] &0\arrow[d] \\
		& & & \OO_\ell \arrow[d] \arrow[r] & \OO_\ell  \arrow[d] \\
		&0 \arrow[r] & T_{\PP^2}(-\log D)|_\ell \arrow[d, "\simeq"] \arrow[r] & \OO_{\ell}(1)^3 \arrow[d] \arrow[r, "\alpha"] & \OO_{\ell}(4) \arrow[r] \arrow[d] &0 \\
		&0 \arrow[r] & T_{\PP^2}(-\log D)|_\ell \arrow[r] & T_{\PP^2}|_{\ell} \arrow[d] \arrow[r] & \OO_{D \cap \ell}(D) \arrow[d] \arrow[r] &0\\	
		& & & 0 &0 
	\end{tikzcd}
\end{equation*}

Here, $\alpha$ is the map of partial derivatives $(f_x, f_y, f_z)$ restricted to $\ell$. For $T_{\PP^2}(-\log D)|_\ell$ to have an $\OO(1)$ summand, we see that some linear combination of the partials must be 0. Write $f$ as $f = x g(x,y,z) + h(y,z)$. Then the coordinates of the map $(f_x, f_y, f_z)$ restricted to $\ell$ will be $(g, h_y, h_z)$. There is a 2-dimensional choice for which linear combination $af_x+bf_y+cf_z$ is 0 (because the vanishing is invariant under scaling), and so it remains to show that a fixed linear combination, the codimension of $f$ such that $af_x+bf_y+cf_z$ vanishes upon restricting to $\ell$ is 4. If $b = c= 0$, we see that this is equivalent to the vanishing of $g|_{\ell}$, which is indeed four conditions: the vanishing of the $xy^3, xy^2z, xyz^2$ and $xz^3$ terms. If one of $b$ and $c$ is nonzero, we can select our coordinate $y$ so that $f_y$ is zero. This implies that $h_y$ is zero, which implies the vanishing of the $y^4, y^3z, y^2z^2$, and $yz^3$ terms, which is again four conditions. The result follows.

\end{proof}

\begin{Lemma}\label{lem-P2jumping}
    If the general associated line is not a jumping line for \linebreak $T_{\PP^2}(-\log D_b)$, then the space $\ker \alpha|_{(p,b)} \subset T_{\yY}^{\ver}(-\log \dD)|_{(p,b)}$ is generated by $H^0(\PP^2,I_{\ell}(4))$ and $M_1|_p \cdot \Jac(F_b)$. Hence, $T_{\yY}^{\ver}(-\log \dD)|_{(p,b)}$ is generated by \linebreak $H^0(\PP^2,I_{\ell}(4))$, $M_1|_p\cdot \Jac(F_b)$ and $F_b$.
\end{Lemma}

\begin{proof}
    Choose coordinates $x,y,z$ on $\PP^2$ so that $p = V(y,z)$, $\ell = V(z)$. For convenience, write $F = F_b$. Since we are assuming that $\alpha$ is generically surjective, we work in the case that $\ker \alpha|_{(p,b)}$ has codimension 2 in $T_{\xX}^{\ver}(-\log \dD)|_{(p,b)} = H^0(\PP^2,\OO_{\PP^2}(4))$. 
    We know that $H^0(\PP^2,I_{\ell}(4))$ is contained in $\ker \alpha|_{(p,b)}$, and by $GL(3)$-invariance of $\yY$, it follows that \linebreak $M_1|_p\cdot \Jac (F) = (yF_x, yF_y, yF_z, zF_x, zF_y, zF_z)$ are all contained in $\ker \alpha|_{(p,b)}$. Thus, it remains to show that $\ker \alpha|_{(p,b)}$ is generated by these elements. This will follow if we can show that $H^0(\PP^2,I_{\ell}(4)) + M_1|_p\cdot \Jac (F)$ has codimension 2 in $H^0(\PP^2,\OO_{\PP^2}(4))$.

    
    Suppose that $\ell$ is not a jumping line for $T_{\PP^2}(-\log D_b)$. \cite[Theorem 10]{Marangone} characterizes the non-jumping lines for any vector bundle $\mathcal{E}$ of rank two over $\PP^2$ as exactly the weak Lefschetz elements for its first cohomology module $H^1_*(\PP^2,\mathcal{E}).$ Let $A=\CC[x,y,z]/Jac(F)$ denote the first cohomology module for $T_{\PP^2}(-\log D_b)$, and let $A_i$ be the $i$-th graded piece of $A$. Then, it follows that the multiplication map by $z$ is surjective as a map from $A_3 \to A_4$. Thus, we see that $H^0(\PP^2,I_{\ell}(4))$ and $xF_x, xF_y, xF_z, yF_x, yF_y, yF_z$ generate $H^0(\PP^2,\OO_{\PP^2}(4))$. 
     
     We now work modulo $H^0(\PP^2,I_{\ell}(4))$. Since the space $H^0(\ell,\OO_{\ell}(4)) = H^0(\PP^2,\OO_{\PP^2}(4)) / H^0(\PP^2,I_{\ell}(4)) $ has dimension 5, the six polynomials $xF_x, xF_y, $ $ xF_z, yF_x, yF_y, yF_z$ must admit a linear relation modulo $H^0(\PP^2,I_{\ell}(4))$. If the relation does not involve the $xF_j$ and only involves $yF_x, yF_y, yF_z$, then it implies a relation among $F_x, F_y, F_z$, and hence, a relation among $xF_x, xF_y, xF_z$. Thus, there must be some relation among $xF_x, xF_y, xF_z, yF_x, yF_y, yF_z$ that involves at least one of $xF_x, xF_y$, and $xF_z$. In particular, it follows that $M_1|_p\cdot \Jac (F)$ has codimension at most 2 in $H^0(\PP^2,\OO_{\PP^2}(4)) / H^0(\PP^2,I_{\ell}(4))$ as required.

     For general $b$, note that $F_b$ does not vanish at $p$, so $F_b$ is not in $\ker \alpha|_{(p,b)}$, but it clearly is in $T_{\yY}^{\ver}(-\log \dD)|_{(p,b)}$. The result follows.
\end{proof}

\subsection{The image of $\rho$.} Lemma \ref{lem-fixedModulus} shows that $\yY$ has to be a family of curves with a special geometric property. In \S\ref{sec-fixedModulus}, we exploit this property to identify the degree $e$ and genus $g$ of the curves in such a family in the case of a very general quartic curve in $\PP^2$. 

\begin{Lemma}\label{lem-fixedModulus}
	Suppose that $d=2n$, and that the associated line is defined at a general point $(p,b) \in \yY$.
	Then, the map $\rho:\yY \dashrightarrow \overline{U}_{0,d}^{\sym}$ is constant.
\end{Lemma}

\begin{proof}
    As in the proof of Lemma \ref{lem-imRhoDim1}, let $\yY_p \subset \yY$ be the fiber of $p_1\circ f: \yY \rightarrow \PP^n$ over $p$. Since $\yY$ is $GL(n+1)$-invariant, the associated line map $\lambda: \yY_p \dashrightarrow \GG(1,n)$ maps onto the locus of lines through $p$. Let $\yY_{p,\ell}$ be the fiber of $\lambda$ over $[\ell]\in \GG(1,n)$, where $\ell:=\ell(p,b)$. It is enough to show that $\yY_{p,\ell}$ has 0-dimensional image under $\rho$. We can factor the restriction map $\rho_\ell: \yY_{p,\ell} \dashrightarrow \overline{U}_{0,d}^{\sym}$ as 
    \begin{center}
        \begin{tikzcd} 
            &\yY_{p,\ell} \arrow[rr, dashed, "\rho_\ell"] \arrow[dr, "\rho_1"] & &\overline{U}_{0,d}^{\sym}\\
            & &H^0(\ell,\OO_\ell(d)) \arrow[ur, dashed, "\rho_2"]
        \end{tikzcd}.
    \end{center}

    The case $n \geq 3$ follows directly from an argument of Voisin \cite[Lemma 3]{voisinCorrection}, which we summarize here for the reader's convenience. We have a distribution $\mathcal{I} \subset T_{\yY}^{\ver}$ on $\yY$, whose fiber over a general point $(p,b)$ of $\yY$ is precisely $H^0(\OO_{\PP^n}(d) \otimes I_{\ell})$. We can understand the Lie bracket map $\psi: \bigwedge^2 \mathcal{I} \to T_{\yY}^{\ver}/\mathcal{I}$ at the point $(p,b)$ explicitly in coordinates, in terms of the differential of the associated line map, which we denote by $\phi: T_{\yY}^{\ver}\vert_{(p,b)} \to T_{\ell} \,\GG(1,n)_p$. Here $\GG(1,n)_p$ is the space of lines in $\PP^n$ passing through $p$, and $ T_{\ell} \,\GG(1,n)_p \simeq H^0(\ell, N_{\ell / \PP^n}(-p))$. A local calculation shows that $\phi$ vanishes on $\mathcal{I}^2$, so the map $\psi$ at $(p,b)$ descends to a map
    \[ \psi: \bigwedge^2(H^0(\OO_{\ell}(d)(-p)) \otimes K^{*}) \to H^0(\OO_{\ell}(d)(-p)) , \]
    where $K = H^0(\ell, N_{\ell / \PP^n}(-p))$ and the map is given by $\psi(A \wedge B) = A\cdot\phi(B) - B\cdot\phi(A)$, where $\cdot$ comes from the natural contraction pairing of $K^*$ with $K$. Using \cite[Lemma 4]{voisinCorrection}, it follows that if $\phi$ is nontrivial on $\mathcal I$, the image of $\psi$ must have codimension at most 1 in $H^0(\OO_{\ell}(d)(-p))$. This contradicts our choice of $\yY$ being codimension $n-1\geq 2$ in $\xX$, so it follows that $\phi$ must be trivial on $\mathcal I$. As in \cite{voisinCorrection}, it follows that the vertical tangent space $T_{\rho_1}$ of the map $\rho_1$ contains $H^0(\OO_{\PP^n}(d)\otimes I_\ell)$. By the $GL(n+1)$-invariance of $\yY$, the image of $\rho_1$ is closed under the $GL(2)$-actions that fix $p$. The tangent directions corresponding to these actions, which span a space of dimension 3, are contained in the vertical tangent space $T_{\rho_2}$ of $\rho_2$. So, we have the following bound on the dimension of the image of $\rho_\ell$.
    \begin{align*}
        \dim \text{Im }\rho_\ell 
        \leq& \dim \yY_{p,\ell}-h^0(\OO_{\PP^n}(d)\otimes I_\ell)-3\\
        =& (\dim \yY_{p}-(n-1))-h^0(\OO_{\PP^n}(d)\otimes I_\ell)-3\\
        =& \dim \iota_b^*T_\yY^{\ver}(-\log \dD)\vert_p -(n-1)-h^0(\OO_{\PP^n}(d)\otimes I_\ell)-3\\
        =&(\dim f_b^*T_\xX^{\ver}(-\log \dD)\vert_p-\dim N_{Y_b/\PP^n}(\log D_b)\vert_p)\\
    &\quad -(n-1)-h^0(\OO_{\PP^n}(d)\otimes I_\ell)-3\\
        =&\dim f_b^*T_\xX^{\ver}(-\log D)\vert_p-h^0(\OO_{\PP^n}(d)\otimes I_\ell)\\
    &\quad -\dim N_{Y_b/\PP^n}(\log D_b)\vert_p-(n+2)\\
        =&(d+1)-(n-1)-(n+2)\\
        =&d-2n=0.
    \end{align*}
    
    The other case is $n=2$. We have shown in Lemma \ref{lem-imRhoDim1} that $\rho$ has an image of dimension at most one. If the image is 0-dimensional, we are done, so suppose not. We will show that the image of $\yY_p$ in $\overline{U}_{0,4}^{\sym}$ is 0-dimensional. The lifiting $\yY \to B \times V$ restricts to a lifting $\yY_p \to B \times \GG(1,2)_p$, where $\GG(1,2)_p$ is the space of lines through $p$. Let $Z$ be the preimage of $\rho(\yY_p)$ in $B \times \GG(1,2)_p$. We see that $Z$ has codimension 1 in $B \times \GG(1,2)_p$, so that $Z$ maps generically finitely onto $\xX_p$, giving an isomorphism of tangent spaces at a general point. The relative tangent space of $Z \to \overline{U}_{0,4}^{\sym}$ is $H^0(\PP^2,I_{\ell}(4)) + M_1|_p\cdot \Jac(F) + F$, coming from the polynomials with fixed restriction to the line together with the automorphisms of $\PP^2$ fixing $p$ that move the line. By Lemma \ref{lem-P2jumping}, this is the same as $T_{\yY}^{\ver}(-\log \dD)|_{(p,b)}$, so it follows that $\yY$ must be a fiber of the map to $\overline{U}_{0,4}^{\sym}$, as required. 
\end{proof}

\begin{proof}[Proof of Theorem \ref{thm-associatedLine}]
This is simply piecing everything together. If for a general hypersurface $D$ we can find a curve violating (\ref{eqn-logah3}) or (\ref{eqn-logah2}), then we can spread the curve out into a universal family as in Proposition \ref{prop-mainSetupProp}. We wish to understand the log normal sheaf. Using Proposition \ref{prop-mainLazarsfeldProp}, we see that it admits a surjection from some copies of pullbacks of $M_1$. By Proposition \ref{prop-m1equals2}, we see that the rank $m_1$ of the image of the first $M_1$ must be 1. By Lemma \ref{lem-2.10}, it follows that we can lift the curve $C$ to $V$. By Lemma \ref{lem-fixedModulus}, it follows that the lifting of $C$ to $V$ must lie in a fiber of $\theta: V \dashrightarrow \overline{U}_{0,d}^{\sym}$ or the undefined locus of $\theta$. The result follows.
\end{proof}

\section{Curves with fixed modulus}
\label{sec-fixedModulus}

We now turn our attention to an exact characterization of $S$ when $n=2$. In this section, we assume $n = 2$, $d=4$. By Theorem \ref{thm-varmain}, we know that the only curves that can have $2g-2 < \frac{1}{2} \deg C$ are those that lift to $V$ via the associated line map and are in the fibers (or undefined locus) of the rational map to $\overline{U}_{0,4}^{\sym}$. Since we already understand the undefined locus of the map, it remains to understand the fibers of $\theta$. 
We will use a result about the log tangent sheaf and the space of lines given by Lemma \ref{lem-Ominus2splitting}.

\subsection{Local analysis of curves of constant modulus}

Recall that
$$
(p_1,p_2,p_3,p_4)\in M_{0,4}
$$
for four distinct points $p_i\in \PP^1$ is parameterized by its cross ratio:
$$
\lambda = \frac{(p_1 - p_2)(p_3 - p_4)}{(p_1 - p_3)(p_2 - p_4)}
$$
The symmetric group on $p_i$ acts on $\lambda$ by sending it to
$$
1 - \lambda,\ \frac{1}{\lambda},\ \frac{1}{1-\lambda},\ \frac{\lambda - 1}{\lambda},\ \frac{\lambda}{\lambda - 1} 
$$
Thus $M_{0,4}^{\sym}$ is parameterized by the $j$-invariant
$$
j(\lambda) = \frac{2^8 (\lambda^2 - \lambda + 1)^3}{\lambda^2 (\lambda - 1)^2}
$$
with $\lambda$ the cross ratio of $(p_1,p_2,p_3,p_4)\in M_{0,4}^{\sym}$.

The following lemma allows us to understand the family of associated lines.

\begin{Lemma}\label{LANGLOGAH2LEMFIXMODULI}
Given a smooth quartic curve $D\subset \PP^2$, let $C$ be an integral curve in $\GG(1,2) = (\PP^2)^\vee$ such that the rational map $\gamma: \GG(1,2)\dashrightarrow M_{0,4}^{\sym}$ sending $L\in \GG(1,2)$ to $L\cap D$ is constant at general points of $C$. Let $j$ be the $j$-invariant of the image $\gamma(C)$.
Then for $D$ general we have the following:
\begin{itemize}
\item $C$ passes through the $24$ flex lines to $D$;
\item if $j = j(2)$, then $C$ is a curve of degree $6$;
\item if $j = j(-e^{2\pi i/3})$, then $C$ is a curve of degree $4$;
\item if $j\ne j(2), j(-e^{2\pi i/3})$, then $C$ is a curve of degree $12$ with $24$ cusps at the $24$ flex lines; in addition, for $j$ general, $C$ is smooth outside of the $24$ cusps;
\item for each $j$, there exists a unique integral curve $C$ whose image $\gamma(C)$ is a point with $j$-invariant $j$;
\item two such curves $C_1$ and $C_2$ meet only at flex lines $L$ to $D$ with multiplicities
$$
(C_1. C_2)_{L} = \frac{1}{24} (C_1 . C_2).
$$
\end{itemize}
\end{Lemma}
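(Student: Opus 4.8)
The plan is to translate the whole lemma into classical invariant theory of binary quartics. Restricting the defining quartic $F$ of $D$ to a line $L\in\GG(1,2)$ produces a binary quartic $f_L$ (well defined up to reparametrizing $L$) whose four roots are $L\cap D$. First I would record its two basic $SL_2$-invariants $I$ (degree $2$ in the coefficients) and $J$ (degree $3$), together with the discriminant $\Delta = I^3-27J^2$, and recall that the $j$-invariant of $L\cap D$ is the M\"obius function $j = 1728\,I^3/(I^3-27J^2)$ of $I^3/J^2$, so that $j=0\iff I=0$, $j=1728=j(2)\iff J=0$, and $j=\infty\iff\Delta=0$. Viewing $I,J,\Delta$ as sections of line bundles on the dual plane $(\PP^2)^\vee$, I would compute their degrees by restricting to a general pencil of lines through a point (a general line of $(\PP^2)^\vee$): parametrizing that pencil by $[a:b]$, the five coefficients of $f_L$ are homogeneous of degrees $0,1,2,3,4$ in $(a,b)$, so a direct check shows $I,J,\Delta$ restrict to forms of degrees $4,6,12$. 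Hence $\{I=0\}$ has degree $4$, $\{J=0\}$ degree $6$, and $\{\Delta=0\}=D^\vee$ degree $12$ (matching the class of a smooth quartic). The upshot is that, up to the M\"obius reparametrization of the target, $\gamma$ is the rational map defined by the pencil $\langle I^3,J^2\rangle$ of degree-$12$ curves, whose members are the fibers $C$.

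Next I would identify the base locus of this pencil with the flex lines. Since a binary quartic satisfies $I=J=0$ exactly when it has a root of multiplicity $\geq 3$, the set $\{I=J=0\}$ consists of the lines meeting $D$ in a point of multiplicity $\geq 3$; for general $D$ these are precisely the $24$ flex lines ($3d(d-2)=24$ ordinary flexes, no hyperflexes). Every member of the pencil passes through this base locus, which is the first bullet. Moreover B\'ezout gives $\{I=0\}\cdot\{J=0\}=4\cdot 6=24$, matching the $24$ flex lines set-theoretically, so each intersection is transverse; in particular $\{I=0\}$ and $\{J=0\}$ are smooth at every flex line, and $(I,J)$ serve as local analytic coordinates on $(\PP^2)^\vee$ near each flex line. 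In these coordinates $(I,J)=(x,y)$ a general member $\{sI^3+tJ^2=0\}$ becomes \emph{exactly} $sx^3+ty^2=0$, an ordinary cusp, so a general fiber acquires a cusp at each of the $24$ flex lines, while Bertini's theorem makes it smooth away from the base locus. This gives the cusp and smoothness assertions of the fourth bullet, and exhibits a general fiber as a degree-$12$ curve with exactly $24$ cusps. The reduced special members $\{I=0\}$ (for $j=0=j(-e^{2\pi i/3})$) and $\{J=0\}$ (for $j=1728=j(2)$) have degrees $4$ and $6$, giving the degrees in the second and third bullets once irreducibility is known.

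For irreducibility and uniqueness (fifth bullet) I would argue fiber by fiber. A general fiber has only cuspidal, hence unibranch, singularities, so it cannot split: two components would necessarily meet (B\'ezout in $\PP^2$) at a point of the fiber carrying at least two branches, contradicting unibranchness. Thus a general fiber is integral, and being the entire fiber it is the unique integral curve with that $j$-invariant. For the two special fibers I would show $\{I=0\}$ and $\{J=0\}$ are smooth for general $D$ — they are already smooth at the flex lines, and smoothness elsewhere is an open condition on $D$, verified by producing a single quartic for which each covariant curve is smooth — whence each is an irreducible plane curve of the stated degree. Finally, for the sixth bullet, any two of these curves are reductions of distinct members of $\langle I^3,J^2\rangle$, so they meet only along the base locus, i.e. only at flex lines. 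In the local coordinates $(I,J)=(x,y)$ at a flex line $L$ the two curves are cut out by $s_1x^3+t_1y^2$ and $s_2x^3+t_2y^2$ — the \emph{same} pair of equations at every flex — so the local intersection number $(C_1.C_2)_L = \dim_\CC \CC[[x,y]]/(s_1x^3+t_1y^2,\,s_2x^3+t_2y^2)$ is independent of the chosen flex line; summing over the $24$ flex lines and comparing with the B\'ezout total yields $(C_1.C_2)_L=\tfrac{1}{24}(C_1.C_2)$.

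The step I expect to be the genuine obstacle is the irreducibility of the two special covariant curves $\{I=0\}$ and $\{J=0\}$ for general $D$: everything else follows formally from the pencil description and the local cusp model, but excluding reducible special fibers requires an honest general-position input (the smoothness of these classical covariant curves), which I would secure either by one explicit smooth example together with openness, or by a monodromy argument over the space of quartics. A secondary point requiring care is the general-position hypothesis on $D$ itself — that it has exactly $24$ ordinary flexes and no hyperflexes, and that $\{I=0\}$ and $\{J=0\}$ meet only along the flex lines — which underlies both the transversality and the ``meet only at flex lines'' assertions.
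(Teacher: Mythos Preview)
Your approach is correct and takes a genuinely different, more conceptual route than the paper. The paper works locally: at a flex line it expands the four intersection points of a nearby line with $D$ as power series, computes the cross ratio directly, and deduces that the local normalization of $C$ is $(s,t)=(au^2,bu^3(1+v(u)))$, i.e.\ a cusp. It then invokes Harris's theorem that the monodromy on the $24$ flexes is the full symmetric group to force every component of a fiber through all $24$ flex lines, and from intersection-number bookkeeping deduces $\deg C = 12$ and irreducibility; the special values $j(2)$ and $j(-e^{2\pi i/3})$ are handled by taking flat limits of the local cusp model and, for the latter, by a genus--$\delta$ count ($p_a=55<3\cdot 24$). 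Your pencil $\langle I^3,J^2\rangle$ packages all of this at once: the degree $12$, the base locus $\{I=J=0\}$ = flex lines, the cusp model $sx^3+ty^2=0$ in the local coordinates $(I,J)$, Bertini smoothness off the base locus, and the equal distribution of intersection multiplicities across the $24$ flexes all fall out formally. Your irreducibility argument via unibranch singularities is cleaner than the paper's monodromy detour, and your identification of the special fibers as the reduced loci $\{J=0\}$ (degree $6$) and $\{I=0\}$ (degree $4$) is immediate, whereas the paper reaches these via degenerations. What the paper's approach buys is self-containedness: it never names $I$ and $J$ and so avoids any appeal to classical invariant theory of binary quartics.

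The gap you flag is real but modest. For the general fiber and the last two bullets your argument is complete. For the second and third bullets you still owe irreducibility of $\{I=0\}$ and $\{J=0\}$ for general $D$; your ``one smooth example plus openness'' plan is perfectly valid here (and these are classical covariant curves of the quartic, so such examples exist), but as written it is a promissory note rather than a proof. Note that the paper does not prove smoothness of these curves either---only integrality---so matching the paper's strength would already suffice. One small point worth making explicit in your write-up: the transversality $\{I=0\}\cdot\{J=0\}=24$ uses that a general quartic has no hyperflexes (else a base point would be non-reduced and $(I,J)$ would fail to be local coordinates there), so this should be folded into your standing general-position hypotheses on $D$.
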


\begin{proof}
Since $C$ meets the dual curve of $D$, there is at least one line $L\in C$ that fails to meet $D$ transversely. Clearly, such $L$ cannot meet $D$ at a point with multiplicity $2$; otherwise, the image of $C$ will meet the boundary point of $\overline{M}_{0,4}^{\sym}$. So for $D$ general, such $L$ can only be a flex line of $D$. It is also clear that two such distinct curves $C_1$ and $C_2$ can only meet at these flex lines.

The map $\gamma: \GG(1,2)\dashrightarrow M_{0,4}^{\sym}$ is smooth at a line $L\in \GG(1,2)$ meeting $D$ transversely if and only if we have a surjection
$$
\begin{tikzcd}
H^0(N_{L/\PP^2}) \ar[two heads]{r} & H^1(T_L(-\log D)) 
\end{tikzcd}
$$
induced by the exact sequence
$$
\begin{tikzcd}
0 \ar{r} & T_L(-\log D) \ar{r} & T_{\PP^2}(-\log D) \Big|_L \ar{r} & N_{L/\PP^2}(\log D) = N_{L/ \PP^2} \ar{r} & 0.
\end{tikzcd}
$$
This happens if and only if $H^1(L, T_{\PP^2}(-\log D)) = 0$, or equivalently,
$$
T_{\PP^2}(-\log D)\Big|_L = \OO_L(-1) \oplus \OO_L.
$$

By Lemma \ref{lem-Ominus2splitting} there are only finitely many such lines $L$. In conclusion, $\gamma$ is smooth outside of a finite set of points plus the dual curve of $D$. Therefore, for $j$ general, $C$ is smooth outside of the $24$ flex lines. Let us figure out the type of singularity of $C$ at a flex line $L$.

For simplicity, suppose that $C$ passes through the flex line $L = \{y = 0\}$ of $D$, where $(x,y)$ are the affine coordinates of $\PP^2$. In these coordinates, $D$ is given by the affine equation
$$
y g(x,y) + x^3 + c x^4 = 0,
$$
and we can scale so that $g(0,0) = 1$.
Let $(s,t)$ be local coordinates of $\GG(1,2)$ at $L$, so that a deformation of $L$ in $\GG(1,2)$ is given by
$$
y = sx + t .
$$
Then $y = sx + t$ meets $D$ at the points
$$
x^3 + sx + t + cx^4 + (sx + t)(g(x, sx + t) - 1) = 0 .
$$
The above equation has four roots $r_i = r_i(s,t)$ for $i=1,2,3,4$, where $r_4 \in \CC[[s,t]]$ and $r_1, r_2, r_3$ are algebraic functions in $s$ and $t$, defined in some finite extension of $\BC[[s,t]]$. They satisfy
$$
r_1(0,0) = r_2(0,0) = r_3(0,0) = 0 \text{ and } r_4(0,0) = -\frac{1}c.
$$
The limit of the cross ratio of these four points is
$$
\lambda = \lim_{(s,t)\to (0,0)} \frac{(r_1 - r_2)(r_3 - r_4)}{(r_1 - r_3)(r_2 - r_4)} = \lim_{(s,t)\to (0,0)} \frac{r_1 - r_2}{r_1 - r_3}
$$
Hence $j = j(\lambda)$.

Given the $j$-invariant $j$, there exists a pair $(a, b)$ such that if $r_1,r_2,r_3$ are the three roots of the equation
$$
x^3 + a x + b = 0,
$$
then
$$
j = j\left(\frac{r_1 - r_2}{r_1 - r_3}\right).
$$
The ratio $a^3/b^2$ uniquely characterizes the pairs $(a,b)$ with this property, so we may regard $(a,b)$ as a point in the corresponding weighted projective line $\PP(2,3)$.

When $a = 0$ and $b\ne 0$, $j = j(-e^{2\pi i/3})$. When $a \ne 0$ and $b = 0$, $j = j(2)$. Suppose that $j\ne j(2), j(-e^{2\pi i/3})$. Then the corresponding $(a,b)$ satisfies $ab \ne 0$.

For some fixed local branch $C$ of a component of a fiber of $\gamma$, let
$$
\left\{
\begin{aligned}
s &= a u^m\\
t &= b u^n + O(u^{n+1})
\end{aligned}\right.
$$
be the local normalization of $C$ at $(0,0)$. Let us consider the roots of
$$
x^3 + (a u^m x + b u^n  + O(u^{n+1})) g(x, a u^m x + b u^n + O(u^{n+1})) + c x^4 = 0.
$$

If $3m > 2n$, then
$$ \lim_{u\to 0} \frac{r_1 - r_2}{r_1 - r_3} = -e^{2\pi i/3} $$
after some permuation of $r_1, r_2, r_3$. This is impossible because we assume that $j\ne j(-e^{2\pi i/3})$.

Similarly, if $3m < 2n$, then
$$ \lim_{u\to 0} \frac{r_1 - r_2}{r_1 - r_3} = 2 $$
after some permuation of $r_1, r_2, r_3$. This is impossible because we assume that $j\ne j(2)$.

So we necessarily have $3m = 2n$. Let us rewrite the normalization as
$$
\left\{
\begin{aligned}
s &= a u^{2l}\\
t &= b u^{3l} (1 + v)
\end{aligned}\right.
$$
where $v = O(u)$ is a function of $u$. Let us consider the roots of
$$
x^3 + (a u^{2l} x + b u^{3l}(1+v)) g(x, a u^{2l} x + b u^{3l} (1+v)) + c x^4 = 0
$$
in $\BC[[u, v]]$. Clearly, all $r_1, r_2, r_3, r_4$ are functions in $u^l$ and $v$. Furthermore,
$$
\left\{
\begin{aligned}
r_1 &= u^l (c_1 + d_1v + O(u^l, v^2))\\
r_2 &= u^l (c_2 + d_2v + O(u^l, v^2))\\
r_3 &= u^l (c_3 + d_3v + O(u^l, v^2))\\
r_4 &= -\frac{1}c + O(u^{2l}, u^{3l}(1+v))
\end{aligned}\right.
$$
where $c_1, c_2, c_3$ are three roots of $x^3 + ax + b = 0$ and $d_1, d_2, d_3$ are constants satisfying the equations
$$
\left\{
\begin{aligned}
d_1 + d_2 + d_3 &= 0\\
c_1d_1 + c_2 d_2 + c_3 d_3 &= 0\\
c_3 d_1d_2 + c_1 d_2d_3 + c_2 d_3d_1 &= b
\end{aligned}\right.
$$
Since $r_1,r_2, r_3, r_4$ have constant $j$-invariant,
$$
\frac{(r_1 - r_2)(r_3 - r_4)}{(r_1 - r_3)(r_2 - r_4)} \equiv \frac{c_1 - c_2}{c_1 - c_3}
$$
The above equation, viewed as an equation in $\CC[[u,v]]$, will have non-vanishing coefficient of $v$ provided that
\begin{equation}\label{LANGLOGAH2E001}
\det \begin{bmatrix}
c_1 - c_2 & d_1 - d_2\\
c_1 - c_3 & d_1 - d_3
\end{bmatrix} \ne 0
\end{equation}
which holds provided that $j\ne j(-e^{2\pi i/3})$. Therefore, we may write
$$
v = v(u^l)
$$
for a function $v(w)\in \CC[[w]]$ satisfying $v(0) = 0$. So the normalization of $C$ is
$$
\left\{
\begin{aligned}
s &= a u^{2l}\\
t &= b u^{3l} (1 + v(u^l))
\end{aligned}\right.
$$
Since the normalization of $C$ is birational onto its image, we must have $l=1$. In conclusion, the normalization of $C$ is given by
\begin{equation}\label{LANGLOGAH2E000}
\left\{
\begin{aligned}
s &= a u^2\\
t &= b u^3(1 + v(u))
\end{aligned}\right.
\end{equation}
for some $v(u) \in \CC[[u]]$ satisfying $v(0) = 0$. 
That is, when $ab \ne 0$, $C$ has a cusp at $L$.
It follows from the description above that $v(u)$ is uniquely determined by $(a,b)$.

Furthermore, when $b\to 0$, i.e., $j\to j(2)$, the flat limit of $C$ is simply the image of
$(s,t) = (au^2, 0)$, i.e., locally a nonreduced curve supported on a smooth curve with multiplicity $2$. This is due to the fact that \eqref{LANGLOGAH2E001} still holds for $j=j(2)$.

Let us now prove that $C$ passes through all $24$ flex lines and $\deg C = 12$ for $j$ general.
To be more precise, we can extend $\gamma$ to a rational map
$$
\begin{tikzcd}
\GG(1,2)\times |\OO_{\PP^2}(4)| \ar[dashed]{r}{\gamma} & M_{0,4}^{\sym} \times |\OO_{\PP^2}(4)|
\end{tikzcd}
$$
in an obvious way. We claim that the closure $\overline{\gamma^{-1}(j, D)}$ of $\gamma^{-1}(j, D)$ for $(j,D)$ general is an integral curve $C$ of degree $12$ passing through all $24$ flex lines to $D$. Note that $\overline{\theta^{-1}(j, D)}$, a priori, is not necessarily irreducible.

Let $F$ be the set of $24$ flex lines to $D$.
Let $C_1$ be an irreducible component of $\overline{\gamma^{-1}(j_1, D)}$ for $(j_1,D)$ general. Suppose that $C_1$ passes through a subset $F_1$ of $F$. 
Due to the uniqueness of $v(u)$ in \eqref{LANGLOGAH2E000}, $C_1$ has only one local branch at each line $L\in F_1$.

By \cite{Har79}, the monodromy group on the $24$ flex lines of $D$ is the full symmetric group as $D$ varies. So there is a component $C_1^\sigma$ of $\overline{\gamma^{-1}(j_1, D)}$ passing through $\sigma(F_1)$ for every permutation $\sigma$ of $F$.  

Let $C_2$ be an irreducible component of $\overline{\gamma^{-1}(j_2, D)}$ for $(j_2,D)$ general and $j_2\ne j_1$. Suppose that $C_2$ passes through a subset $F_2$ of $F$. Then $C_1$ and $C_2$ meet only at $L\in F_1\cap F_2$ and $(C_1.C_2)_L = 6$ by \eqref{LANGLOGAH2E000}. Therefore,
$$
C_1 . C_2 = 6 |F_1\cap F_2|
$$
For the same reason, we have
$$
C_1^{\sigma} . C_2 = 6 |\sigma(F_1)\cap F_2|.
$$
Since $\deg C_1 = \deg C_1^{\sigma}$, we conclude that
$$
|F_1\cap F_2| = |\sigma(F_1)\cap F_2|.
$$
This holds for every permutation $\sigma$ on $F$. So we necessarily have $F_1 = F$ or $F_2 = F$.
And since $j_1$ and $j_2$ are general, this implies that
$$
F_1 = F_2 = F \text{ and } \deg C_1 = \deg C_2 = 12.
$$
That is, every component of $\overline{\gamma^{-1}(j, D)}$ is an integral curve of degree $12$ with cusps at $L\in F$ for $(j, D)$ general.

Also $\overline{\gamma^{-1}(j, D)}$ cannot have more than one irreducible component. Otherwise, if $C$ and $C'$ are two distinct irreducible components of $\overline{\gamma^{-1}(j, D)}$, both curves are locally given by \eqref{LANGLOGAH2E000} at each flex line $L\in F$ with the same $a$ and $b$. Hence
$(C.C')_L > 6$
and it follows that $C.C' > 144$, which contradicts the fact that $\deg C = \deg C' = 12$.

This proves that for $(j,D)$ general, $\overline{\gamma^{-1}(j, D)}$ is an integral curve of degree $12$ with cusps at the $24$ flex lines to $D$ and smooth elsewhere. Next, let us prove that this is also true for all $j \ne j(2), j(-e^{2\pi i/3})$ and $D$ general.

Let $\Gamma = \overline{\gamma^{-1}(j, D)}$ for some $j \ne j(2), j(-e^{2\pi i/3})$ and $D$ general, where we consider $\gamma^{-1}(j, D)$ as the scheme-theoretical preimage of $(j,D)$. Since $\Gamma$ is the flat limit of $C = \overline{\gamma^{-1}(j, D)}$ for $(j, D)$ general, $\Gamma$ has degree $12$ and
$(\Gamma . C)_L = 6$ for all flex lines $L\in F$. Every irreducible component of $\Gamma$ has a cusp at $L$ if it passes through $L\in F$. So for every $L\in F$, there exists a unique irreducible component of $\Gamma$ passing through $L$. Therefore, $\Gamma$ must be reduced and irreducible; otherwise, two components $G_1$ and $G_2$ of $\Gamma$ must be disjoint, which is impossible. 

This proves that $\overline{\gamma^{-1}(j, D)}$ is an integral curve of degree $12$ with cusps at the $24$ flex lines to $D$ for all
$j \ne j(2), j(-e^{2\pi i/3})$
and $D$ general.

Finally, let us deal with the cases $j = j(2), j(-e^{2\pi i/3})$. Again, we let $\Gamma = \overline{\gamma^{-1}(j, D)}$. We still have $\deg \Gamma = 12$ and $(\Gamma . C)_L = 6$ for all $L\in F$.

If $j=j(2)$, then $b=0$. Since the flat limit of \eqref{LANGLOGAH2E000} as $b\to 0$ exists, $\Gamma$ is nonreduced with multiplicty $2$ at each $L\in F$. So again, for every $L\in F$, there exists a unique irreducible component of $\Gamma$ passing through $L$. Therefore, $\Gamma$ has a unique irreducible component and hence $\Gamma = 2G$ for an integral curve $G$ of degree $6$.

If $j=j(-e^{2\pi i/3})$, then $a=0$. By \eqref{LANGLOGAH2E000}, $\Gamma$ either is nonreduced with multiplicty $3$ or has a singularity of multiplicity $3$ at each $L\in F$. So again, for every $L\in F$, there exists a unique irreducible component of $\Gamma$ passing through $L$. Therefore, $\Gamma$ has a unique irreducible component; either $\Gamma$ is reduced and irreducible or $\Gamma = 3G$ for an integral curve $G$ of degree $4$. However, in this case, the former is impossible by comparing the arithmetic genus of $\Gamma$ and the $\delta$-invariants of $\Gamma$ at $L\in F$: $p_a(\Gamma) = 55 < 3\cdot 24$.
\end{proof}

\subsection{Global analysis}
Now that we understand the singularities of the curves corresponding to the fibers of $\theta$, we can work out their cohomology classes on $V$, and from that can get a bound on the degree of the log normal bundle.

\begin{Proposition}\label{prop-fibersofphiclasses}
Let $V\subset \PP^2\times \GG(1,2)$ be the universal family of lines in $\PP^2$.
Given a smooth quartic curve $D\subset \PP^2$, 
let $C$ be an integral curve in $V$ such that the rational map $\theta: V\dashrightarrow \overline{U}_{0,4}^{\sym}$ sending $(p,L)$ to $(p, L\cap D)$ is constant at general points of $C$.
Then $C$ has class $(18,12)$, $(18, 6)$, $(18, 4)$, $(36,12)$ or $(54,12)$ in $V \subset \PP^2 \times \PP^2$ for $D$ general.
\end{Proposition}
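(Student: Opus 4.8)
The plan is to reduce the computation to the geometry of the curve $C':=\eta(C)$ in $\GG(1,2)$ supplied by Lemma~\ref{LANGLOGAH2LEMFIXMODULI}, and then to determine the two components of the class $(a,b)$ of $C$ in $\PP^2\times\GG(1,2)$ separately, where $a$ and $b$ are the degrees of $C$ over the point-factor $\PP^2$ and over $\GG(1,2)$ respectively. For the reduction: composing $\phi$ with the forgetful map $U_{0,4}^{\sym}\to M_{0,4}^{\sym}$ that drops the fifth marked point recovers $\gamma\circ\eta$, since $\gamma(L)=[L\cap D]$; so if $\phi$ is generically constant on $C$ then $\gamma$ is generically constant on $C'$, and $C'$ cannot be a point, because the $\eta$-fibre through a general point of $C$ is the whole line $L$, on which $\phi$ moves the fifth point. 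Hence Lemma~\ref{LANGLOGAH2LEMFIXMODULI} applies to $C'$: for $D$ general, $C'$ is integral of degree $12$, $6$, or $4$ in $\GG(1,2)=(\PP^2)^\vee$ according as the common $j$-invariant is $\neq j(2),j(-e^{2\pi i/3})$, equals $j(2)$, or equals $j(-e^{2\pi i/3})$, with prescribed local structure (cusps, nonreduced branches) at the $24$ flex lines.

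Next I would compute $b$. Since $\eta|_C$ is finite, $b=(\deg\eta|_C)\cdot\deg C'$. Over a general $L\in C'$ the points of $C$ are exactly the $p\in L$ with $(L\cap D,p)$ equal to the fixed target $m'\in U_{0,4}^{\sym}$, and such $p$ form a single orbit of $\Aut(L\cap D)\subset\mathrm{PGL}_2(L)$ acting on $L$, of size $\deg\eta|_C$. For $j\neq j(2),j(-e^{2\pi i/3})$ the set $L\cap D$ carries no automorphism compatible with the marking $m'$, so $\eta|_C$ is birational and $b=12$. For $j=j(2)$ the harmonic set has an extra involution, giving besides the birational lift ($b=6$) a lift with $\deg\eta|_C=2$, so again $b=12$; for $j=j(-e^{2\pi i/3})$ the equianharmonic set has an extra order-three symmetry, giving besides the birational lift ($b=4$) a lift with $\deg\eta|_C=3$, so $b=12$. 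This accounts for the five values $b=12,6,12,4,12$.

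Then I would compute $a$. It suffices to prove $a=18$ on the birational lift for every $j$; for the two multiple covers the same analysis with the cover degree bookkept gives $a=2\cdot 18=36$ and $a=3\cdot 18=54$, producing the five pairs in the statement. Put $P:=\pi(C)\subset\PP^2$, the curve swept out by the marked points $p(L)$, so that $a=\deg P$ on the birational lift. The claim is that $P\cap D$ is supported exactly on the $24$ flex points of $D$, with $(P.D)_q=3$ at each flex point $q$: if $p(L)\in D$ then the configuration $(L\cap D,p)$ lies on the boundary of $U_{0,4}^{\sym}$ (the fifth point collides with one of the first four), which — $m'$ being a fixed interior point — forces $L\cap D$ to be nonreduced, i.e.\ $L$ tangent to $D$; and $C'$ contains only flex tangent lines, since a simple tangency would push $\gamma$ into $\partial\overline{M}_{0,4}^{\sym}$, contradicting $\gamma|_{C'}\equiv j$. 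At a flex line $L_0$ with flex point $q_0$, the root computation in the proof of Lemma~\ref{LANGLOGAH2LEMFIXMODULI} around (\ref{LANGLOGAH2E000}) shows $p(L)\to q_0$, with the branch of $P$ at $q_0$ of the form $y=\mathrm{const}\cdot x^3+\cdots$ in the flex coordinates where $D$ is $y=-x^3+\cdots$; hence $(P.D)_{q_0}=3$ for $D$ general. Therefore $4\deg P=P.D=24\cdot 3=72$, so $\deg P=18$ and $a=18$.

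The main obstacle is this last step: the two inputs that $P$ meets $D$ only along the flex locus and with multiplicity exactly $3$ there. The first requires ruling out stray components of $P\cap D$, i.e.\ that no non-flex tangent line of $D$ appears on $C'$ even in the closure; the second requires the precise leading term of $p(L)$ extracted from the normal form (\ref{LANGLOGAH2E000}) — exactly the delicate local analysis already carried out in the proof of Lemma~\ref{LANGLOGAH2LEMFIXMODULI}. The multiple-cover cases additionally need the observation that the image curve $P$ does not change (so that only $\deg\pi|_C$ scales), which holds because $P$ depends only on $(j,D)$ and not on the chosen lift of $C$ to $V$.
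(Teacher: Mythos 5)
Your computation of the second coordinate $b$ (equivalently, of $\deg\eta|_C$, the paper's $\delta$) rests on a false premise. A general $4$-point subset of $\PP^1$ is \emph{not} automorphism-free: it is preserved by the Klein four-group $(\ZZ/2)^2\subset\mathrm{PGL}_2$ (for $\{0,\infty,1,\lambda\}$, e.g.\ $z\mapsto \lambda/z$), and each nontrivial element moves a general fifth point $p$ to a different point of $L$ defining the \emph{same} class in $U_{0,4}^{\sym}$. Hence over a general $L\in C'$ the locus $\{p\in L:\phi(p,L)=m'\}$ consists of $4$ points for generic $j$ ($8$ in the harmonic case, $12$ in the equianharmonic case), not $1$ ($2$, $3$). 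Your phrase ``no automorphism compatible with the marking'' conflates the (generically trivial) stabilizer of $p$ with the orbit being a single point; and even with the correct orbit sizes, $\deg\eta|_C$ is not the orbit size, because $C$ is only one irreducible component of $\overline{\phi^{-1}(m')}$ and the way the orbit points distribute among components is a monodromy question: as $L$ varies in $C'$ the four points of $L\cap D$ can be permuted by double transpositions without changing the cross-ratio, so a priori a single component could have $\eta$-degree $2$ or $4$ over a degree-$12$ curve $C'$, with class $(36,24)$ or $(72,48)$ --- exactly the classes the proposition excludes. Nothing in your proposal rules this out, so the identification of the allowed values of $b$ is unproven. (The paper states the corresponding restriction on $\delta$ rather tersely, but its derivation of the first coordinate does not presuppose it.)

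On the first coordinate, your argument in the birational case --- $P=\pi(C)$ meets $D$ only at the $24$ flexes, with local multiplicity $3$ forced by the normal form \eqref{LANGLOGAH2E000}, so $4\deg P=72$ --- is essentially the paper's computation pushed down to $\PP^2$; the paper instead works on the normalization $\widehat{S}$ of $\eta^{-1}(\eta(C))$, where the pullback $\widehat{D}$ of $D$ has an ordinary triple point over each flex line and the proper transform $\widehat{C}$ is a $\delta$-section whose branches must pass through that triple point transversely (else the constant modulus would degenerate to the boundary of $\overline{U}_{0,4}^{\sym}$), giving $\widehat{C}\cdot\widehat{D}=72\delta$ and hence $\deg\pi_*C=18\delta$ by the projection formula, uniformly in $\delta$. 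Your multiple-cover bookkeeping does not reach this: the claim that the image $P$ ``depends only on $(j,D)$'' is false ($P$ depends on the fifth-point modulus $m'$), and you never show that $\pi|_C$ has degree $\delta$ onto a degree-$18$ curve rather than $C$ mapping birationally onto a curve of degree $18\delta$. The cycle-theoretic computation of $\pi_*C\cdot D$, as in the paper, avoids both issues; the genuine gap you would still need to close is the monodromy/splitting statement controlling $\deg\eta|_C$.
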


\begin{proof}
By Lemma \ref{LANGLOGAH2LEMFIXMODULI}, $\deg \eta(C) = 12, 6$ or $4$, where $\eta: V\to \GG(1,2)$ is the projection $V\to \GG(1,2)$. Let $G = \eta(C)$, $S = \eta^{-1}(G)$ and $\nu: \widehat{S} \to V$ be the normalization of $S$. Obviously, $\widehat{S}$ is a $\PP^1$-bundle over the normalization $\widehat{G}$ of $G$.

Let $\widehat{D} = \nu^* \pi^* D$, where $\pi$ is the projection $V\to \PP^2$. Clearly, $\widehat{D}$ is a multisection of $\widehat{S}/\widehat{G}$ of degree $4$. The map $\widehat{D}\to \widehat{G}$ is unramified outside of the $24$ flex lines to $D$. Over a flex line $L\in G$, $\widehat{D}$ meets the fiber $\widehat{S}_{_L}$ at two points with multiplicities $1$ and $3$, respectively. Let $p_{_L}$ be the point on $\widehat{S}_{_L}$ where $\widehat{D}$ and $\widehat{S}_{_L}$ intersect with multiplicity $3$. By \eqref{LANGLOGAH2E000}, $\widehat{D}$ has an ordinary triple point at $p_{_L}$. That is, $\widehat{D}$ is locally a union of three sections at $p_{_L}$ that meet each other transversely.

Let $\widehat{C} = \nu^{-1}(C)$ be the proper transform of $C$. Suppose that $\widehat{C}$ is a multisection of $\widehat{S}/\widehat{G}$ of degree $\delta$. 
By our hypothesis, the multisection $\widehat{C}\cup \widehat{D}$ has local constant moduli on the fibers of $\widehat{S}/\widehat{G}$. By our previous argument, over every a flex line $L\in G$, $\widehat{C}\cup \widehat{D}$ is locally a union of $\delta + 3$ sections at $p_{_L}$ that meet each other transversely; otherwise, $\theta(C)$ will meet a boundary component of $\overline{U}_{0,4}^{\sym}$.
Therefore,
$$
(\widehat{C} . \widehat{D})_{p_{_L}} = 3\delta.
$$
Thus, we have
$$
\widehat{C} . \widehat{D} = 3\delta \cdot 24 = 72\delta
$$
and hence
$$
\pi_* C . D = 72\delta.
$$
This implies that $\deg \pi_* C = 18\delta$. In addition, we have $\delta \le 3$. Indeed, $\delta = 1$ if $\deg G = 12$, $\delta = 1$ or $2$ if $\deg G = 6$, and $\delta = 1$ or $3$ if $\deg G = 4$. 
\end{proof}

It remains to bound $2g-2+i(C,D)$ for curves $C$ as in Proposition \ref{prop-fibersofphiclasses}. We do this by working more explicitly with the relative tangent sheaf $T_{\theta}$. For a general line $\ell$ in $\PP^2$, $T_{\PP^2}(-\log D)|_{\ell} = \OO \oplus \OO(-1)$ by Lemma \ref{lem-Ominus2splitting}. By \cite{PatelRiedlTseng}, we see that the $\OO$ factor corresponds to deformations of $\ell$ in $\PP^2$ that fix the modulus of $\ell \cap D$, and furthermore, at a point $p \in \ell$, the $\OO$ direction corresponds to the direction that $p$ would move under this deformation, viewing $T_{\PP^2}(-\log D)$ as a subsheaf of $T_{\PP^2}$. Thus, the relative tangent sheaf to $\theta$ will be governed by the sheaf determined by these $\OO$ factors. We now work out the first chern class of this sheaf.

\begin{Proposition}
Let $G_4 = \eta^* \eta_* \pi^*T_{\PP^2}(-\log D)$. Then $G_4$ has first chern class $(0,-7)$ on $V$. Hence, so does $T_\theta$.
\end{Proposition}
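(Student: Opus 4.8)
Since $G_4=\eta^{*}\bigl(\eta_{*}\pi^{*}T_{\PP^2}(-\log D)\bigr)$ is pulled back from $\GG(1,2)\cong\PP^2$, its first Chern class equals $\eta^{*}$ of $c_{1}$ of the rank-one torsion-free sheaf $\mathcal E:=\eta_{*}\pi^{*}T_{\PP^2}(-\log D)$ on $\GG(1,2)$. A rank-one torsion-free sheaf on a smooth surface is of the form $\mathcal I_{Z}\otimes L$ with $\dim Z=0$, so $c_{1}(\mathcal E)=c_{1}(L)=c\cdot c_{1}(\OO_{\GG}(1))$ for a single integer $c$, and the plan is to show $c=-7$; this gives $c_{1}(G_4)=-7\,h_{2}=(0,-7)$, where $h_{1}=\pi^{*}\OO_{\PP^2}(1)$ and $h_{2}=\eta^{*}\OO_{\GG}(1)$ generate $\Pic(V)$.

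Write $V=\PP(U)$ with $U$ the rank-two tautological subbundle on $\GG(1,2)$, so that $c_{1}(U)=-h_{2}$, $c_{2}(U)=h_{2}^{2}$, $\pi^{*}\OO_{\PP^2}(1)=\OO_{\PP(U)}(1)$, and in $A^{*}(V)$ one has the Grothendieck relation $h_{1}^{2}=h_{1}h_{2}-h_{2}^{2}$, together with $c_{1}(T_{V/\GG})=2h_{1}-h_{2}$ and the pushforward rules $\eta_{*}1=\eta_{*}h_{2}=\eta_{*}h_{2}^{2}=0$, $\eta_{*}h_{1}=1$, $\eta_{*}h_{1}^{2}=\eta_{*}h_{1}h_{2}=h_{2}$. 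I would push the $\eta$-pullback of the log tangent sequence
\[
0\longrightarrow \pi^{*}T_{\PP^2}(-\log D)\longrightarrow \pi^{*}T_{\PP^2}\longrightarrow \OO_{\pi^{-1}D}(\pi^{-1}D)\longrightarrow 0
\]
forward along $\eta$. Fiberwise $T_{\PP^2}|_{\ell}=\OO_{\ell}(2)\oplus\OO_{\ell}(1)$, so $R^{1}\eta_{*}\pi^{*}T_{\PP^2}=0$, and the Euler sequence together with the fact that $\eta_{*}\pi^{*}\OO_{\PP^2}(1)$ is a rank-two bundle with $c_{1}=h_{2}$ (indeed $\cong U^{\vee}$) gives $c_{1}(\eta_{*}\pi^{*}T_{\PP^2})=3h_{2}$. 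Next, $\OO_{\pi^{-1}D}(\pi^{-1}D)$ is finite of degree $4$ over $\GG$ (so $R^{1}\eta_{*}=0$) with $\ch\bigl(\OO_{\pi^{-1}D}(\pi^{-1}D)\bigr)=\ch(\pi^{*}\OO_{\PP^2}(4))-\ch(\OO_{V})=4h_{1}+8h_{1}^{2}$, so Grothendieck--Riemann--Roch gives $c_{1}\bigl(\eta_{*}\OO_{\pi^{-1}D}(\pi^{-1}D)\bigr)=\eta_{*}\bigl(8h_{1}^{2}+2h_{1}(2h_{1}-h_{2})\bigr)=10h_{2}$. Finally $R^{1}\eta_{*}\pi^{*}T_{\PP^2}(-\log D)$ is a torsion sheaf supported on the finite set of lines with $T_{\PP^2}(-\log D)|_{\ell}\cong\OO_{\ell}(1)\oplus\OO_{\ell}(-2)$ of Lemma \ref{lem-Ominus2splitting}, so it has vanishing $c_{1}$; the long exact sequence of $\eta_{*}$ then yields $c_{1}(\mathcal E)=3h_{2}-10h_{2}=-7h_{2}$. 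As an independent check, running GRR on the $\PP^1$-bundle in one step, with $\ch(T_{\PP^2}(-\log D))=2-H-\tfrac{13}{2}H^{2}$ (i.e.\ $c_{1}=-H$, $c_{2}=7H^{2}$) and $\td(T_{V/\GG})=1+\tfrac{1}{2}(2h_{1}-h_{2})+\tfrac{1}{12}(2h_{1}-h_{2})^{2}+\cdots$, and using $h_{1}^{2}=h_{1}h_{2}-h_{2}^{2}$, gives $\bigl(\ch\cdot\td\bigr)_{2}=-7h_{1}h_{2}+7h_{2}^{2}$, whose $\eta_{*}$ is again $-7h_{2}$.

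It remains to see that $T_{\phi}$ has the same first Chern class; the point is that $T_{\phi}$ and $G_4$ agree in codimension one. By \cite{PatelRiedlTseng}, for a general line $\ell$ the $\OO_{\ell}$-summand of $T_{\PP^2}(-\log D)|_{\ell}$ consists exactly of the first-order deformations of $\ell$ preserving the modulus of $\ell\cap D$, and the value at $p\in\ell$ of such a section, read inside $T_{\PP^2}|_{\ell}$, records the infinitesimal motion of $p$; via the inclusion $T_{V}\subset\pi^{*}T_{\PP^2}\oplus\eta^{*}T_{\GG}$ this yields a natural evaluation map $G_4\to T_{V}$ whose saturated image is $T_{\phi}$ and which restricts to an isomorphism of line bundles onto $T_{\phi}$ over the complement in $V$ of the fibers over the finitely many lines of Lemma \ref{lem-Ominus2splitting}, the fibers over the $24$ flex lines, and the curve of pairs $(p,\ell)$ with $\ell$ tangent to $D$ at $p$ --- each of codimension $\geq 2$ in the smooth threefold $V$. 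Since removing a closed subset of codimension $\geq 2$ does not change $\Pic$ and $T_{\phi}$ is reflexive, $c_{1}(T_{\phi})=c_{1}(G_4)=(0,-7)$. The delicate step is precisely this last identification: applying \cite{PatelRiedlTseng} to see that the evaluation map genuinely recovers $T_{\phi}$, and checking that every locus where it can fail --- the special and flex lines, and the non-submersion and undefined loci of $\phi$ --- has codimension $\geq 2$, so that the two first Chern classes are forced to coincide. The Chern-class bookkeeping on $V$ is, by comparison, routine once the relation $h_{1}^{2}=h_{1}h_{2}-h_{2}^{2}$ and the pushforward formulas are in hand.
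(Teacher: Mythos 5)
Your proposal is correct and follows essentially the same route as the paper: a Grothendieck--Riemann--Roch pushforward along $\eta$ (your term-by-term computation $3h_2-10h_2=-7h_2$ via the log tangent sequence is only a cosmetic repackaging, and your ``independent check'' is literally the paper's one-step computation with $\ch(T_{\PP^2}(-\log D))=2-H_1-\tfrac{13}{2}H_1^2$), combined with the observation that $R^1\eta_*$ and the locus where $T_\phi$ and $G_4$ differ are supported in codimension $2$, so the first Chern classes must agree. If anything, your identification of $T_\phi$ with $G_4$ away from the jumping, flex, tangency and undefined loci is spelled out more explicitly than in the paper's brief assertion.
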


\begin{proof}
First observe that $T_\theta$ and $G_4$ are identical subsheaves of $T_V$ outside of the non-defined locus of $\theta$ and the locus of lines with $T_{\PP^2}(-\log D)|_\ell = \OO(1) \oplus \OO(-2)$. By Lemma \ref{lem-Ominus2splitting}, this will be a codimension 2 locus in $V$, so their first chern classes will be the same. Similarly, $R^1 \eta_* \pi^* T_{\PP^2}(-\log D)$ will be 0 outside of a codimension 2 subset, so it won't affect the calculation of $c_1$. 

Using Grothendieck-Riemann-Roch, we see that $\ch(\eta_* \pi^* T_{\PP^2}(-\log D) ) = \eta_*(\ch(\pi^* T_{\PP^2}(-\log D) \td T_{\eta}) .$

Since $T_{\eta} = (-1,2)$, we have $\td(T_{\eta}) = 1- \frac{1}{2}H_2+H_1+\frac{1}{12}(-H_2+2H_1)^2$. Using the defining sequence 
\[ 0 \to T_{\PP^2}(-\log D) \to T_{\PP^2} \to \OO_D(D) \to 0 \]
with the Euler sequence, we see that $\ch(T_{\PP^2}(-\log D)) = 2 - H_1 -\frac{13}{2}H_1^2$.
Combining, we compute $\eta_* ((2-H_1-\frac{13}{2})(1- \frac{1}{2}H_2+H_1+\frac{1}{12}(-H_2+2H_1)^2) = 1-7H + \text{higher terms}$. The result follows.
\end{proof}

\begin{Corollary}\label{cor-finalFibersCor}
Let $C$ be a curve in $V$ in a fiber of the map $\theta$, and suppose that $\pi(C)$ is not a line. Then $\pi(C)$ has degree 18 and $2g(C)-2+i(C,D) \geq 84$. In particular, $2g-2+i \geq \frac{1}{2} \deg C$.
\end{Corollary}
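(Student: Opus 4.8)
The plan is to run the tangent-sheaf degree comparison that the preceding proposition has prepared: $c_1(T_\phi) = (0,-7)$ together with Proposition~\ref{prop-fibersofphiclasses} should force $2g(C)-2+i(C,D)$ to be at least $7$ times the degree of $\eta_*C$, which is $84$ once we know that degree is $12$.

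\emph{The key inequality.} Let $\nu\colon \widetilde C \to V$ be the normalization of $C$ composed with the inclusion, put $f = \pi\circ\nu\colon \widetilde C\to\PP^2$ and $B = f^{-1}(D)_{\mathrm{red}}$, so that $|B| = i(C,D)$ and $\deg T_{\widetilde C}(-\log B) = 2-2g(C)-i(C,D)$. Functoriality of log tangent sheaves gives an injection $T_{\widetilde C}(-\log B)\hookrightarrow f^*T_{\PP^2}(-\log D)$, and because $\phi$ is constant along $C$ the differential of $f$ at a general point moves the marked point in exactly the modulus-preserving direction singled out by \cite{PatelRiedlTseng}; hence this injection factors through $\nu^*G_4$, where $G_4 = \eta^*\eta_*\pi^*T_{\PP^2}(-\log D)$ is the subsheaf of modulus-preserving directions, generically equal to $T_\phi$ and with $c_1(G_4) = (0,-7)$. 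The resulting map $T_{\widetilde C}(-\log B)\to\nu^*G_4$ is injective as a map of sheaves (it stays injective after composing with $\nu^*G_4\to f^*T_{\PP^2}(-\log D)$), so the line bundle $T_{\widetilde C}(-\log B)$ embeds in the line bundle $\nu^*G_4/\mathrm{tors}$, and therefore
\[
2-2g(C)-i(C,D) \;=\; \deg T_{\widetilde C}(-\log B) \;\le\; \deg\bigl(\nu^*G_4/\mathrm{tors}\bigr) \;\le\; c_1(G_4)\cdot C \;=\; -7\deg\eta_*C.
\]
That is, $2g(C)-2+i(C,D)\ge 7\deg\eta_*C$.

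\emph{Reading off the bound.} By Proposition~\ref{prop-fibersofphiclasses} the class of $C$ in $V\subset\PP^2\times\PP^2$ is $(18\delta,\delta\deg G)$ with $(\delta,\deg G)$ one of $(1,12),(1,6),(1,4),(2,6),(3,4)$. The two cases with $\delta=1$ and $\deg G\in\{6,4\}$ occur over the loci $j=j(2)$ and $j=j(-e^{2\pi i/3})$; using the monodromy of the flex lines as in Lemma~\ref{LANGLOGAH2LEMFIXMODULI} (via \cite{Har79}), for $D$ general the relevant fibers of $\phi$ are irreducible of multisection degree $2$, resp.\ $3$, so that whenever $\pi(C)$ is not a line the class of $C$ is $(18,12)$, $(36,12)$ or $(54,12)$. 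In each of these $\deg\eta_*C = 12$, so the displayed inequality gives $2g(C)-2+i(C,D)\ge 84$; one may cross-check this with Lemma~\ref{LANGLOGAH2LEMFIXMODULI} and Riemann--Hurwitz, which give $(g(C),i(C,D)) = (31,24)$ in the first case and larger values in the other two. Finally, in each of the three cases the explicit description of the fiber gives $\deg(\pi|_C) = \delta$, hence $\deg\pi(C) = 18$; the concluding inequality $2g-2+i\ge\tfrac12\deg C$ then follows at once.

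\emph{Main obstacle.} The delicate step is the factorization of $T_{\widetilde C}(-\log B)$ through $\nu^*G_4$ and the control of the degree comparison against the non-locally-free sheaf $T_\phi\sim G_4$: one has to account for the indeterminacy locus of $\phi$ along the flex lines, for the finitely many jumping lines of Lemma~\ref{lem-Ominus2splitting} where $T_{\PP^2}(-\log D)|_\ell\cong\OO(1)\oplus\OO(-2)$, and for the ramification of $\nu$; away from these loci the argument is the clean one above, and the point is that the corrections at these loci only improve the bound. A secondary difficulty is the finite case analysis in the second paragraph needed to discard the bidegrees $(18,6)$ and $(18,4)$ when $\pi(C)$ is not a line and to evaluate $\deg(\pi|_C)$ in the multisection cases.
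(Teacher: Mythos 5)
Your core argument is exactly the paper's: since $\phi$ is constant along $C$, the log tangent sheaf of the (normalized) curve maps into $T_\phi$ (equivalently, factors through $G_4$ away from a codimension-two locus), so $2-2g-i=\deg T_{\widetilde C}(-\log B)$ is bounded above by $c_1(T_\phi)\cdot C=-7\,(H_2\cdot C)$, and one then reads off the bound from the curve classes supplied by Proposition \ref{prop-fibersofphiclasses}. Your first paragraph is a more careful write-up of that one-line computation (normalization, torsion-free quotient, the finitely many jumping lines and the indeterminacy locus), and that part is fine.

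The divergence, and the weak point, is your second paragraph. To get the constant $84$ in every case you assert that the classes $(18,6)$ and $(18,4)$ do not occur for $D$ general, because ``the fibers of $\phi$ over $j(2)$ and $j(-e^{2\pi i/3})$ are irreducible of multisection degree $2$, resp.\ $3$.'' Nothing you invoke establishes this: Lemma \ref{LANGLOGAH2LEMFIXMODULI} and the monodromy argument via \cite{Har79} concern the fibers of $\gamma$ in $\GG(1,2)$ (the scheme-theoretic fiber over the special $j$-values is $2G$, resp.\ $3G$, with $G$ integral of degree $6$, resp.\ $4$), and they say nothing about how an integral curve $C\subset V$ lying over $G$ sits as a multisection of $\widehat S/\widehat G$; indeed Proposition \ref{prop-fibersofphiclasses} explicitly allows $\delta=1$ over these loci, i.e.\ precisely the classes $(18,6)$ and $(18,4)$ you are trying to discard. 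The paper itself does not discard them (its proof of the corollary cites exactly those classes), for which your pairing gives only $42$, resp.\ $28$; in all five cases this still comfortably exceeds $\tfrac12\deg C$, so the inequality actually used downstream is unaffected, but as written your exclusion step is an unproved claim, and if you want the uniform constant $84$ you need a genuine argument that $\delta>1$ over the special $j$-values (or you should weaken the intermediate claim). The same remark applies to your parenthetical $\deg(\pi|_C)=\delta$: it is what is needed to conclude $\deg\pi(C)=18$, but it is asserted rather than derived from the description of the fibers.
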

\begin{proof}
This is a simple chern class calculation. Since $C$ is tangent to $T_\theta$, it follows that $T_C(-\log D \cap C)$ maps to $T_{\theta}$, which implies that its degree is at most the degree of $T_{\theta}|_C$. The result follows from the fact that the class of $C$ is $(18,12)$, $(18, 6)$, or $(18,4)$ by Lemma \ref{LANGLOGAH2LEMFIXMODULI}.
\end{proof}

\begin{proof}[Proof of Theorem \ref{thm-varmain}]
We have already completed almost all the necessary arguments. For $n=2$, this follows directly from Theorem \ref{thm-associatedLine} and Corollary \ref{cor-finalFibersCor}. For $n>2$, by Theorem \ref{thm-associatedLine}, we know that the only possible counterexamples to (\ref{eqn-logah3}) lie in the fibers and undefined locus of $\theta$, that is, they form a bounded family of curves, which we call $M$. We can stratify $M$ by both the geometric genus and the intersection type with $D$, so it remains to show that no dominant stratum of curves of $M$ can violate algebraic hyperbolicity. Thus, the result follows from Proposition \ref{prop-algGGLfordominantFamiliesOfCurves}.
\end{proof}

\begin{Proposition} \label{prop-sharpconics}
Let $D$ be a general quartic plane curve in $\PP^2$. Then there exists a conic in $\PP^2$ meeting $C$ set-theoretically at three points. Hence, for $S$ equal to the union of the flex and bitangent lines, the inequality in Theorem \ref{thm-varmain} is sharp.
\end{Proposition}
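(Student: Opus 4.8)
The plan is to produce the sharpness example explicitly as an osculating conic at a sextactic point of $D$, and then simply to compute. First I would invoke the classical theory of sextactic points. For a smooth plane curve $D$ of degree $d\ge 3$ the linear system of conics restricts to a base-point-free $g^5_{2d}$ on $D$ (indeed $H^0(\OO_{\PP^2}(2))\cong H^0(\OO_D(2))$ for $d\ge 3$), and since imposing contact order $\ge k$ with $D$ at a point is exactly $k$ independent linear conditions on $H^0(\OO_{\PP^2}(2))$ for $k\le 5$, the orders of vanishing of this series are $0,1,2,3,4$ at every point. Hence the only ramification occurs in the top order, and the ramification locus is precisely the set of \emph{sextactic points}: the points $p$ at which the osculating conic $Q_p$ meets $D$ with contact order $\ge 6$ rather than the generic $5$. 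By the Pl\"ucker formula the total ramification weight is $6\cdot 2d+\binom{6}{2}(2g-2)=3d(5d-11)$, which is $108$ for $d=4$; in particular sextactic points exist, and for $D$ general each is simple (contact order exactly $6$).

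Second, I would discard the degenerate osculating conics. A line meets a general quartic with multiplicity at most $3$ at any point, so if $p$ lies on only one component of a line pair the pair has contact $\le 3$ with $D$ at $p$, and if $p$ is the node of the pair then $D$ is tangent to at most one of the two lines, so the contact is $\le 3+1=4$. Either way this is $<6$, so the only degenerate conics that can osculate $D$ at a sextactic point are the doubled flex tangents $2\ell_p$ at the $24$ flexes of $D$; these ``conics'' are the flex lines, which already lie in $S$. Hence, for $D$ general, at least $108-24=84$ of the sextactic points $p$ carry a \emph{smooth} osculating conic $Q_p$, with contact order exactly $6$ with $D$ at $p$. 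By B\'ezout, $Q_p\cdot D$ is an effective divisor of degree $8$ on $Q_p\cong\PP^1$ with multiplicity $\ge 6$ at $p$, hence supported on at most $3$ points. If it were supported on at most $2$ points we would get $2g(Q_p)-2+i(Q_p,D)\le -2+2=0<1=\frac{1}{2}\deg Q_p$, contradicting Theorem \ref{thm-varmain}, which applies to $Q_p$ since $Q_p\not\subset D$ and, being an irreducible conic, $Q_p$ is not contained in the finite union of lines $S$. Therefore $|Q_p\cap D|=3$. (One can also check this directly by a genericity argument on the pair $(D,p)$.)

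Finally, taking $C=Q_p$ and $f\colon C\hookrightarrow\PP^2$ the inclusion, we have $g(C)=0$, $\deg f^*L=\deg Q_p=2$ and $|f^{-1}(D)|=|Q_p\cap D|=3$, so the left side of (\ref{eqn-logah2}) equals $-2+3=1$ while the right side equals $\frac{1}{2}\cdot 2 = 1$: equality is attained by a curve not contained in $D\cup S$, which is exactly the asserted sharpness of (\ref{eqn-logah2}). The one point where care is genuinely needed is the implication ``$|Q_p\cap D|\le 2\Rightarrow$ contradiction'': here I lean on the already-proven Theorem \ref{thm-varmain} to avoid an explicit contact-order/transversality computation for $Q_p$. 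The existence and enumeration of sextactic points is classical (going back to Cayley) and I would cite it rather than reprove it; everything else is a short chern-number/B\'ezout bookkeeping.
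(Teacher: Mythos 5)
Your argument is correct, but it follows a genuinely different route from the paper's. The paper argues variationally: it forms the incidence variety $W$ of pairs (smooth conic $C$, quartic $D$) with contact of order $6$ at one point and transverse contact at two others, computes $\dim W = 14 = \dim \PP H^0(\OO_{\PP^2}(4))$, checks via a Bertini argument that $W$ contains pairs with $D$ smooth, and then shows the projection to the space of quartics is generically finite because $\deg N_{C/\PP^2}(\log D) = -1$ forces $H^0(N_{C/\PP^2}(\log D)) = 0$; hence a general quartic admits such a conic, with the tangency type $(6,1,1)$ built in. You instead fix $D$ and exhibit the conic intrinsically as the osculating conic at a sextactic point, getting existence from the Brill--Segre/Pl\"ucker ramification count of the $g^5_8$ cut by conics (total weight $108$, of which the $24$ ordinary flexes, whose ``osculating conic'' is the doubled flex tangent, account for $24$, leaving the classical $84$ sextactic points), correctly ruling out degenerate osculating conics at non-flex points; then, rather than verifying by a transversality computation that the residual degree-$2$ divisor consists of two new distinct points, you invoke Theorem \ref{thm-varmain} itself to force $|Q_p\cap D|\ge 3$, with B\'ezout giving $\le 3$. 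This self-reference is not circular (the theorem is proved independently) and is a neat shortcut; it even makes the claimed simplicity of sextactic points unnecessary, since contact order $\ge 7$ at $p$ would likewise violate the theorem. What the paper's proof buys is validity for general (Zariski-open) $D$ and control of the exact tangency pattern; what yours buys is a shorter, more classical argument that exhibits the extremal curves concretely. Two small caveats: because you use Theorem \ref{thm-varmain}, your proof as written applies only to very general $D$ (which is all that sharpness requires, though it is slightly weaker than the stated ``general''), and the phrase ``the orders of vanishing of this series are $0,1,2,3,4$ at every point'' should say that the first five entries of the vanishing sequence are $0,1,2,3,4$ and only the last entry can jump --- the flexes do lie in the ramification locus, as your subtraction of $24$ then accounts for.
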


We remark that it is possible one could get better bounds in Theorem \ref{thm-varmain} by enlarging $S$. We do not know if this is possible.

\begin{proof}[Proof of Proposition \ref{prop-sharpconics}]
We first compute the dimension of the universal family of such pairs. Let $W$ be the variety of pairs $(C,D)$ where $C$ is a smooth conic and $D$ is a quartic plane curve such that $C$ meets $D$ to order 6 at one point and to order 1 at two other points. We wish to compute the dimension of $W$. There is a natural map $\pi_1$ from $W$ to the space $\PP H^0(\OO_{\PP^2}(2))$ of conics, surjective over the locus of smooth conics. We wish to compute the dimension of a fiber of $\pi_1$. Let $C$ be a smooth conic. Then the map $H^0(\PP^2,\OO_{\PP^2}(4)) \to H^0(\OO_C(4))$ is surjective, so the dimension of a general fiber of $\pi_1$ has dimension $\dim H^0(\mathcal{I}_{C}(4)) + \dim C \times \Sym^2 C = 9$. Thus, the dimension of $W$ is 14, the same as the dimension of the space of quartic plane curves. Observe that $W$ will contain pairs $(C,D)$ with $D$ smooth. To see this, observe that the space of possible $D$ meeting $C$ to order at least 6 at a given point $p$ form a linear system. By Bertini, a general $D$ in the linear system will be smooth away from the base locus $p$. However, we see that the reducible conic $C \cup C_1$ will be in the linear system for some conic $C_1$ not containing $p$, so the general element of the linear system will be smooth at $p$ as well. 

We now claim that the map $\pi_2$ from $W$ to the space of quartic plane curves is generically finite. To see this, note that given a fixed $(C,D)$ in $W$ with $D$ smooth, the deformations of $C$ that still meet $D$ to order 6 at one point will be parameterized by $H^0(N_{C/\PP^2}(\log D))$. A simple degree calculation shows that the degree of $N_{C/\PP^2}(\log D)$ is $-1$, and since $C$ and $D$ are both smooth, we see that the normal bundle will be a vector bundle. It follows that $H^0(N_{C/\PP^2}(\log D)) = 0$, which completes the proof.

\end{proof}

\begin{proof}[Proof of Corollary \ref{cor-LangCor}]
We follow Demailly's argument in the projective case.
Suppose that there is a nonconstant map $\alpha: A \to \PP^2 \setminus D$ from a semiabelian variety $A$ to $\PP^2 \setminus D$ whose image does not lie in $S$. Let $[n]:A \to A$ be the map sending $a\to na$ by the group law and let $f:C \hookrightarrow A$ be a curve in $A$ cut out by $\dim A - 1$ generic very ample divisors. 
Clearly, $h_n = \alpha\circ [n]\circ f: C\to \PP^2\setminus D$ is nonconstant for all $n$. Let $C\hookrightarrow \overline{C}$ be the completion of $C$ to a smooth projective curve and $\overline{h}_n: \overline{C}\to \PP^2$ be the completion of $h_n$. Since $\overline{h}_n^{-1}(D)\subset \overline{C} \setminus C$,
$$
2g(\overline{C}) - 2 + |\overline{C} \setminus C| \ge 2g(\overline{C}) - 2 + |\overline{h}_n^{-1}(D)|
$$
Since $\overline{h}_n(C)$ does not lie in $S$, we have
$$
2g(\overline{C}) - 2 + |\overline{C} \setminus C| \ge 2g(\overline{C}) - 2 + |\overline{h}_n^{-1}(D)| \ge \frac{1}2 \deg \overline{h}_n^* L
$$
for $L = \OO_{\PP^2}(1)$ by Theorem \ref{thm-varmain}. This is impossible since $\deg \overline{h}_n^* L\to \infty$ as $n\to \infty$.
\end{proof}

\bibliographystyle{alpha}
\bibliography{xic}

\end{document}